\documentclass{amsart}[12pt]

  \usepackage[utf8]{inputenc}
  \usepackage{amsmath}
\usepackage{enumerate}
  \usepackage{amssymb}
  \usepackage{amsthm}
  \usepackage{bm}
  \usepackage{graphicx}
  \usepackage{color}
  \usepackage{bigints}
\usepackage{esint}
\usepackage{nccmath}
\usepackage{relsize}
\usepackage{accents}
\usepackage{epigraph}
\usepackage{dsfont}
%\usepackage{mathabx}
%%%%%%%%%%%%%%%%%%%%%%%%%%%%%%%%%%%%%%%%%%%%%%%%%%%
\usepackage{showlabels}
\usepackage[breaklinks,colorlinks,linkcolor=blue,anchorcolor=blue,citecolor=blue,backref=page]{hyperref}
\usepackage{parskip}

\renewcommand*{\backref}[1]{}
\renewcommand*{\backrefalt}[4]{
	\ifcase #1 (Not cited.)
	\or        (p.\,#2)
	\else      (pp.\,#2)
	\fi}
\usepackage[norefs,nocites]{refcheck}
\usepackage[english]{babel}
%%%%%%%%%%%%%%%%%%%%%%%%%%%%%%%%%%%%%%%%%%%%%%%%%%%%%%%%%%%%

%%%%%%%%%%%%%%%%%%%%%%%%%%%%%%%%%%%%%%%%%%%%%%%%TH
%======================================
\newtheorem{Th}{Theorem}[section]
\newtheorem{lem}[Th]{Lemma}

\theoremstyle{definition}
\newtheorem{Def}[Th]{Definition}

\newtheorem{Cor}[Th]{Corollary}
\newtheorem{Prop}[Th]{Proposition}

\newtheorem{que}{Question}
\newtheorem{Conj}{Conjecture}

\theoremstyle{remark}
\newtheorem{rem}[Th]{Remark}
%%%%%%%%%%%%%%%%%%%%%%%%%%%%%%%%%%%%%%%%%%%%%%%%%%

\newtheorem{thank}{\ \ \ Acknowledgment}

\def\scalar(#1,#2){(#1\mid#2)}

\renewcommand{\hat}{\widehat}

\newcommand{\cb}{\mathcal{B}}

\newcommand{\cm}{\mathcal{M}}

\newcommand{\cl}{\mathcal{L}}

\newcommand{\Pro}{{\mathbb{P}}}
\newcommand{\T}{{\mathbb{T}}}
\newcommand{\Ha}{{\mathbb{H}}}

\newcommand{\Z}{{\mathbb{Z}}}

\newcommand{\N}{{\mathbb{N}}}

\newcommand{\1}{{\mathds{1}}}
%\newcommand{\N}{\mbox{\bf N}}
%\newcommand{\Q}{\mbox{\bf Q}}
%\newcommand{\R}{\mbox{\bf R}}
%\newcommand{\T}{\mbox{\bf T}}
%\newcommand{\Z}{\mbox{\bf Z}}

%===========================================================
\newcommand{\tend}[3][]{\xrightarrow[#2\to#3]{#1}}

%=====================================================
%notation pour l'ensemble des coef non nuls

%\newcommand{\Id}{\mathop{\mbox{Id}}}	

%=======================================================
%%%%%%%%%%%%%%%%%%%%%%%%%%%%%%%%%%%%%%%%%%%%%%%%%%%%%%%%%%%%%
\newcommand{\setdef}{\stackrel {\rm {def}}{=}}
\newcommand{\ds}{\displaystyle}

%%%%%%%%%%%%%%%%%%%%%%%%%%%%%%%%%%%%%%%%%%%%%%%%%%%%%%%%%%%%%%%%

%%%%%%%%%%%%%%%%%%%%%%%%%%%%%%%%%%%%%%%%%%%%%%%%%%%%%%%%%%%%%%%%%%%

\title[On the Mahler measure of the spectrum of rank one maps]{on the Mahler measure of the spectrum of rank one maps$^{\star}$}
\author{E. H. el Abdalaoui}
%\author{M. Nadkarni}
\address{Normandie Universit\'e, Universit\'e de Rouen-Math\'ematiques, \\
  Labo. de Maths Raphael SALEM  UMR 60 85 CNRS
Avenue de l'Universit\'e, BP.12
76801 Saint Etienne du Rouvray - France }
\email{elhoucein.elabdalaoui@univ-rouen.fr}
%\address{Indian Institute of Technology\\
%Khandra Road, Indore, India, 452017\\}
%\email{mgnadkarni@gmail.com}
\thanks{}
\dedicatory{$^{\star}$ Dedicated to Professor Fran\c{c}ois Parreau}
\date{June 20, 2013}
\subjclass[2010]{Primary 37A05, 37A30, 37A40; Secondary 42A05, 42A55}
\keywords{Mahler measure, rank one maps, generalized Riesz products, affinity, Mahler Measure, Kakutani Dichotomy Theorem, Zygmund Dichotomy Theorem, flat polynomials, singular spectrum, simple Lebesgue spectrum,  Hardy spaces, outer functions, inner functions}
\begin{document}
\begin{abstract}
We extend partially the Kakutani-Zygmund dichotomy theorem to a class of generalized Riesz-product type measures by proving that the generalized Riesz-product is singular if and only if its Mahler measure is zero. As a consequence, we exhibit a new subclass of rank one maps acting on a finite measure space with singular spectrum. In our proof the $H^p$ theory coming to play. Furthermore, by appealing to a deep result of Bourgain, we prove that the Mahler measure of the spectrum of rank one map with cutting parameter $p_n=O(n^\beta)$, $\beta \leq 1$ is zero, and  we establish that the integral of the square root of the absolute part of any generalized Riesz-product is strictly less than 1. This answer partially a question asked by M. Nadkarni.  
%Our methods allows us to prove that the Banach problem, Littlewood problem and Mahler problem are equivalent in the class of rank one maps acting on a space of infinite measure.
\end{abstract} 
\maketitle
\epigraph{What matters to an active man is to do the right thing; whether the right thing comes to pass should not bother him.}{\textit{ Goethe }}

\epigraph{Fej\'{e}r used to say-in the 1930's, "Everybody writes and nobody reads." This was true eventhen. Reviewing has improved, but even so it is very hard.} {\textit{ Erd\"{o}s}}
\section{Introduction}
The purpose of this paper is to investigate the properties of the Mahler measure of the maximal spectral type of rank one maps and to discuss the question on whether the Mahler measure of the  spectrum of rank one maps acting on a finite measure space is zero. According to our first main result, this is question concern the problem of the singularity of the spectrum of rank one maps acting on finite measure space \cite[pp.14]{Frenczi}, \cite{Thouvenot}. Rank one
maps have simple spectrum and  using a random procedure, D. S. Ornstein
produced a family of mixing
rank one maps \cite{Ornstein}. It follows that Ornstein's class of
maps may possibly contain a candidate for Banach-Rhoklin's 
problem whether there exists a map $T$ acting on the probability space  $(\Omega,{\mathcal
{A}},\Pro,)$  with simple Lebesgue spectrum \footnote{For more details on the well-known Banach's problem, see the next paragraph entitled Banach's problem. }. Unfortunately, in 1993, J.
Bourgain proved that almost surely Ornstein's
maps have singular spectrum \cite{Bourisr}. Subsequently, using the
same methods, I. Klemes \cite{Klemes1}
showed that the  subclass of staircase maps has singular maximal spectral type. In particular, this subclass contains the
mixing staircase maps of Adams-Smorodinsky \cite{Adams}. Using a refinement of Peyri\`ere criterium \cite{Peyriere}, I. Klemes \& K. Reinhold proved that the rank one maps have a singular spectrum if the inverse of the cutting parameter is not in $\ell^2$ (that is, $\sum_{k=1}^{+\infty} \frac1{m_k^2}=+\infty$, where $(m_k) \subset \Big\{2,3,4,\cdots\Big\}$  is the cutting parameter) \cite{Klemes2}. This class contains the mixing staircase maps of Adams \&
Friedman \cite{Adams2}. In 1996, H. Dooley and S. Eigen adapted the Brown-Moran methods \cite[pp.203-209]{GrahamMc} and proved that the spectrum of a subclass of Ornstein maps is almost surely singular \cite{DooleyE}.\\

Later, el Abdalaoui-Parreau and Prikhod'ko extended Bourgain theorem \cite{Bourisr} by proving that for any family of probability measures in Ornstein type constructions, the corresponding maps have
almost surely a singular spectrum \cite{elabdalihp}. They obtained the same result for Rudolph's construction \cite{Rudolph}.
In 2007, el Abdalaoui showed that the spectrum of the rank one map is singular
provided that the spacers  $(a_j)_{j=1}^{m_k} \subset \N$,
are lacunary for all $k$ \cite{elabdaletds}. The author used the Salem-Zygmund Central Limit Theorem methods. As a consequence, the author presented a simple proof of Bourgain theorem \cite{Bourisr}.

Five years later, by appealing to the martingale approximation technique, C. Aistleitner and M. Hofer \cite{AH} proved a counterpart of the result in \cite{elabdaletds}. Precisely, they proved that the spectrum of the rank one maps is singular provided that the cutting parameter  $(m_k) \in \N^*$ and the spacers  $(a_j)_{j=1}^{m_k} \subset \N$ satisfy:
\begin{enumerate}[i)]
\item $\ds \frac{\log(m_{k_n})}{h_{k_n}}$ converge to $0$;
\item the proportion of equal terms in the spacers is at least $c.m_{k_n}$ for some fixed constant $c$ and some subsequence $(k_n)$.\\
\end{enumerate}

Seven years ago \cite{elabdal-Nad}, e. H. el Abdalaoui and M. Nadkarni produced an infinite product formula for the Mahler measure of the absolutely continuous part of the generalized Riesz product related to the Mahler measure of the polynomials $(P_k)_{k \in \N}$ associated to it. By appealing to the $H^p$ theory, they proved that this infinite product is exactly the Mahler measure of its . They further established that the  infinite product of $(P_k)_{k \in \N}$ converge in $L^1$ to the square root of the absolutely continuous part of the generalized Riesz product.  From this, they conjectured that the logarithm of the absolutely continuous part of the spectrum of any rank one map acting on a space of \emph{finite} measure is not integrable. Moreover, they proved that the Banach's problem, Littlewood problem and Mahler problem are equivalent in the class of rank one maps acting on a space of infinite measure.\\

\paragraph{\textbf{Banach's problem.}}Following Ulam \cite[p.76]{Ulam}, Banach asked:
\begin{que}[Banach Problem]
	Does there exist a square integrable function $f(x)$ and a measure preserving transformation $T(x)$,
	$-\infty<x<\infty$, such that the sequence of functions $\{f(T^n(x)); n=1,2,3,\cdots\}$ forms a complete
	orthogonal set in Hilbert space?\footnote{Professor M. Nadkarni pointed to me that the question contain an oversight. The  sequence of functions should be bilateral, that is, $n \in \Z$.}
\end{que}
Obviously, as pointed by Rokhlin, the map  $n \in \Z \mapsto n+1$ with the indicator function of $\{1\}$ answer the question. But the map is dissipative. For the conservative case, el Abdalaoui obtained also an affirmative answer by producing an infinite rank one map \cite{AS}. The proof is based on the ideas from \cite{elabdal-Nad} \cite{Abd-Nad2} combined with the singer's theoretical number theory theorem and tools from extremal harmonic analysis. The strategy is accomplished, as in \cite{Abd-Nad1}, by producing a sequence of flat analytic polynomials with coefficients $0,1$. But the polynomials are only flat in $L^1$-sense instead  in \cite{Abd-Nad1}, the polynomials are ultra-flats with  positive coefficients.
  
The most famous Banach problem in ergodic theory (which should be attributed to Banach and Rokhlin) asks if there is a measure preserving transformation on a probability space which
has simple Lebesgue spectrum. A similar problem is mentioned by Rokhlin in \cite[p.219]{Rokh}. Precisely, Rokhlin asked on the existence
of an ergodic measure preserving transformation on a finite measure space whose spectrum is Lebesgue type with finite multiplicity.
Later,  Kirillov in his 1966's paper \cite{Kiri} wrote ``there are grounds for thinking that such examples do not exist".
However he has described a measure preserving  action (due to M. Novodvorskii) of
the group $(\bigoplus_{j=1}^\infty{\mathbb {Z}})\times\{-1,1\}$ 
on the compact dual of discrete rationals whose unitary group has Haar spectrum of multiplicity 2. Similar group actions with higher finite even multiplicities are also given.\\

Subsequently, finite measure preserving transformation having Lebesgue component of finite even multiplicity have been
constructed by J. Mathew and M. G. Nadkarni \cite{MN}, Kamae \cite{Kamae}, M. Queffelec \cite{Queffelec1}, and  O. Ageev \cite{Ag}.
Fifteen years later, M. Guenais \cite{Guenais} used a $L^1$-flat generalized Fekete polynomials on some torsion groups 
to construct a group action with simple Lebesgue component. 
A straightforward application of Gauss formula yields that the generalized Fekete polynomials constructed by Guenais
are ultraflat. Very recently, el Abdalaoui and Nadkarni strengthened Guenais's result \cite{Abd-Nad1} by proving that there exist an ergodic non-singular dynamical 
system with simple Lebesgue component. However, despite all these efforts, it is seems that the question of Rokhlin still open since the maps constructed does not have {\it a pure
	Lebesgue spectrum.}
\\ 
%he posed the question whether one can produce a rank one maps with positive Mahler measure of its spectrum (see section 2 for definition (Theorem \ref{Szego})).\\

Here, we are able to extend partially Kakutani-Zygmund dichotomy theorem by proving that the maximal spectral type of the rank one map is either singular or its absolute continuous part is equivalent to the Lebesgue measure, depending whether the product of Mahler measure of $(P_k)_{k\in \N}$ converges or diverges (Theorem \ref{dicho} below).

This combined with the recent result of el Abdalaoui and Nadkarni in \cite{elabdal-Nad} and \cite{Abd-Nad2} allows us to establish that the rank one maps has a singular spectrum provided that each $P_k$ has less than $c.h_{k-1}$ zeros bigger than 1 in absolute value
where $c$ is a positive constant less than one.\\

Furthermore, we are able to give a partial negative answer to the question of \linebreak Nadkarni by proving that the integral of the square root of the absolute part of any non trivial generalized Riesz products is strictly less than 1. Furthermore, using a deep result of Bourgain \cite{Bourisr}, we establish that there is a new subclass of rank one maps acting on \emph{finite and infinite} measure space for which the Mahler measure is zero. For this subclass the cutting parameter satisfies $m_k=\theta(k^{\beta})$, for some $\beta \leq 1$. As a consequence, we obtain that the spectrum of any map in this class is singular. However, we are not able to answer the question on whether the Mahler measure on any rank one acting on probability space is zero. Nevertheless, we made the following conjecture.

\begin{Conj}Let $(X,\cb,\Pro,T)$ be a rank one map and assume that
	$\Pro(X)<+\infty$. Then, the Mahler measure of its spectrum is zero.
\end{Conj}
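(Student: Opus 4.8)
The plan is to use the dichotomy of Theorem \ref{dicho} to convert the statement about the Mahler measure into a quantitative statement about the generating polynomials, and then to exploit the finite-measure hypothesis to force the relevant series to diverge. Writing the maximal spectral type as the generalized Riesz product with density $\prod_k |P_k|^2$, where $P_k(z)=\frac{1}{\sqrt{m_k}}\sum_{j=0}^{m_k-1} z^{\,j h_{k-1}+s_{k,j}}$ is the normalized rank one polynomial ($s_{k,j}$ the cumulative spacers, $\|P_k\|_{L^2(\T)}=1$), the Mahler measure of the spectrum vanishes precisely when the infinite product $\prod_k M(P_k)^2$ diverges to $0$, that is, when $\sum_k\bigl(1-M(P_k)\bigr)=+\infty$. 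So by Theorem \ref{dicho} it suffices to prove this divergence for every rank one construction with $\Pro(X)<+\infty$.

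Next I would quantify the deficit. Since $M(P_k)=\exp\bigl(\int_{\T}\log|P_k|\,d\lambda\bigr)\le \|P_k\|_2=1$ by Jensen, the normalization $\int_{\T}|P_k|^2\,d\lambda=1$ gives $-\log M(P_k)=\tfrac12\bigl(\log\int|P_k|^2-\int\log|P_k|^2\bigr)$, the Jensen gap of $|P_k|^2$, which measures exactly the non-flatness of $|P_k|^2$. Thus $1-M(P_k)$ is comparable to this gap, and I would seek a lower bound $1-M(P_k)\ge \eta_k>0$ in terms of the cutting parameter $m_k$ and the spacer distribution, either directly through an $L^2$--$L^4$ control of the fluctuations of $|P_k|^2$, or through the zero-counting criterion recalled in the introduction (few zeros of $P_k$ outside the unit disk forcing singularity), applied stage by stage.

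The finite-measure hypothesis is the lever I would pull to show $\sum_k\eta_k=+\infty$. Finiteness of the measure constrains the heights relative to the spacers: the tower has finite mass only if the accumulated spacer mass $\sum_k \sigma_k/h_k$ (with $\sigma_k=\sum_{i=1}^{m_k} a_{k,i}$) is controlled, so the spacers cannot be too abundant relative to the heights. I would argue that this prevents the $P_k$ from flattening in the $L^1$ (Mahler) sense fast enough: either the spacers are too sparse to spread the mass of $|P_k|^2$ evenly, keeping $1-M(P_k)$ bounded below along a subsequence and forcing divergence; or, in the slowly growing regime $m_k=O(k^\beta)$ with $\beta\le 1$, one falls directly into the subclass already handled through Bourgain's estimate \cite{Bourisr}, where the Mahler measure is shown to vanish.

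The hard part will be exactly the regime of fast-growing $m_k$ with finely tuned spacers. There $P_k$ has so many terms that one cannot a priori exclude it from becoming $L^1$-flat, and the deficit $1-M(P_k)$ could in principle decay so fast that $\prod_k M(P_k)^2$ converges. This is the flatness phenomenon underlying the equivalence of the Banach, Littlewood, and Mahler problems established in \cite{elabdal-Nad} for the infinite-measure case; the entire difficulty is to prove that the finite-measure normalization is \emph{incompatible} with such ultraflat or $L^1$-flat behavior of the rank one polynomials. No present technique controls the Mahler measure uniformly across this regime, which is precisely why the statement is offered here only as a conjecture rather than a theorem.
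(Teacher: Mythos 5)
There is a genuine gap, and you have in fact named it yourself: your text is not a proof but a reduction followed by an admission that the reduction cannot be closed. That admission is accurate, because the statement you were asked to prove is stated in the paper as a \emph{conjecture}, not a theorem --- the author writes explicitly that he is ``not able to answer the question on whether the Mahler measure of any rank one acting on probability space is zero,'' and the paper contains no proof of it. What the paper does prove are two partial cases: Theorem \ref{main2} and Corollary \ref{singular}, where Bourgain's $L^1$ estimate (Proposition \ref{BoMcps}) gives $1-\|P_k\|_1 \geq c\,\log(m_k)/m_k$ and the growth hypothesis $m_k=\theta(k^\beta)$, $\beta\leq 1$, makes the resulting series diverge; and Theorem \ref{AN} with Corollary \ref{CAN}, where a zero-counting hypothesis on each $P_k$ forces $M(d\mu/dz)=0$. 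Your first two paragraphs reproduce exactly this framework (the infinite-product formula of Theorem \ref{elabdal-Nad1} plus the dichotomy of Theorem \ref{dicho}, reducing everything to $\sum_k(1-M(P_k))=+\infty$), so up to that point you are following the paper's own route to its partial results.

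The gap is your third paragraph: the claim that finiteness of the measure ``prevents the $P_k$ from flattening'' is never made quantitative, and no known argument makes it so. Finiteness of $\Pro(X)$ only says $\sum_k \sigma_k/(m_k h_k)<+\infty$ (the relative spacer mass is summable); it imposes no lower bound whatsoever on $1-M(P_k)$, and in the regime of rapidly growing $m_k$ nothing rules out $1-M(P_k)$ decaying summably fast. Indeed, Bourgain's bound $1-c\log(m_k)/m_k$ is perfectly compatible with $\prod_k M(P_k)>0$ once $m_k$ grows faster than polynomially, which is precisely why the paper's Theorem \ref{main2} needs $\beta\leq 1$. Deciding whether finite measure excludes $L^1$-flat behavior of the $P_k$ is equivalent, by the paper's own results and those of \cite{elabdal-Nad}, to hard open problems of Littlewood--Mahler type; so the step ``the finite-measure normalization is incompatible with flatness'' is not a missing computation but the entire open content of the conjecture. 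In short: your proposal correctly maps the territory, coincides with the paper where the paper has results, and fails exactly where the paper fails --- it should be graded as a (well-informed) non-proof, since the target statement has no proof in the paper or, as far as is known, anywhere else.
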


%\noindent{}From this, we deduce that
%\begin{Cor}[Kalikow \cite{Kalikow}] All mixing rank one maps are mixing of all orders.
%\end{Cor}

%Indeed, this follows from our main result 2 combined with Host theorem \cite{Host}. Let us recall that Host theorem, says that all mixing maps on a probability space with singular spectrum are mixing of all order.\\

Obviously, this conjecture is related to Klemes-Reinhold's conjecture \cite{Klemes2} on the spectrum of rank one. Indeed, therein the authors conjectured that 
all rank one maps have singular spectrum and, in the same spirit,
C. Aistleitner and M. Hofer wrote in the end of their paper ``several authors believe that all rank one transformations could have singular maximal spectral type.'' \cite{AH}. It seems that this conjecture was formulated since Baxter result \cite{Baxter}, \cite{Thouvenot}. Of-course, the conjecture  stand only for rank one acting on finite measure space.

The paper is organized as follows. In section \ref{rkone} we recall the definition of rank one map and some standard facts on the notion of Mahler measure. This allows us to state our main results in section \ref{S-mains} . In section \ref{S-tool}, we review some basic facts on the affinity and the mutual singularity of two probability measures. Finally, in section \ref{S-Proofs}, we present the proofs our ours main results.\\

%Using the notion of Mahler measure, we shall reformulated this theorem in the next section. Let us mention two important consequence of Theorem \ref{m1}.

\section{rank one maps by cutting and stacking methods}\label{rkone}
Using the cutting and stacking method described in \cite{Friedman1}, \cite{Friedman2},
one can construct inductively a family of measure preserving
maps, called rank one maps, as follows
\vskip 0.1cm Let $B_0$ be the unit interval equipped with
Lebesgue measure. At stage one we divide $B_0$ into $m_0$ equal
parts, add spacers and form a stack of height $h_{1}$ in the usual
fashion. At the $k^{th}$ stage we divide the stack obtained at the
$(k-1)^{th}$ stage into $m_{k-1}$ equal columns, add spacers and
obtain a new stack of height $h_{k}$. If during the $k^{th}$ stage
of our construction  the number of spacers put above the $j^{th}$
column of the $(k-1)^{th}$ stack is $a^{(k-1)}_{j}$, $ 0 \leq
a^{(k-1)}_{j} < \infty$,  $1\leq j \leq m_{k}$, then we have
$$h_{k} = m_{k-1}h_{k-1} +  \sum_{j=1}^{m_{k-1}}a_{j}^{(k-1)}.$$
%\vskip 3 cm \hskip 3.5cm
%  \tower
%\vskip 3.0cm
%\centerline{Figure 2: $(n+1)^{\hbox{th}}$--tower.}
\begin{figure}[hbp]
\begin{center}
%\mbox{\input{rankone.pstex_t} }
\scalebox{0.5}{\input{rankone.pstex_t} }
\end{center}
\end{figure}
\noindent{}Proceeding in this way we get a rank one map
$T$ on a certain measure space $(X,{\mathcal B},\mid.\mid)$ which may
be finite or
$\sigma-$finite depending on the number of spacers added. \\
\noindent{} The construction of a rank one map thus
needs two parameters, $(m_k)_{k=0}^\infty$ (cutting parameter), and $((a_j^{(k)})_{j=1}^{m_k})_{k=0}^\infty$
(spacers parameter). Put

$$T \stackrel {def}= T_{(m_k, (a_j^{(k)})_{j=1}^{m_k})_{k=0}^\infty}$$

\noindent In \cite{Nadkarni1} and \cite{Klemes2} it is proved that
the spectral type of this map is given (up to possibly some discrete measure) by

\begin{eqnarray}%\label{eqn:type1}
d\mu  ={\rm{W}}^{*}\lim \prod_{k=1}^n\big| P_k\big|^2d\lambda,
\end{eqnarray}
\noindent{}where
\begin{eqnarray*}
&&P_k(z)=\frac 1{\sqrt{m_k}}\left(1+
\sum_{j=1}^{m_k-1}z^{-(jh_k+\sum_{i=1}^ja_i^{(k)})}\right),\nonumber  \\
\nonumber
\end{eqnarray*}
\noindent{}$\lambda$ denotes the normalized Lebesgue measure on the
circle group $\T$ and $\rm{W}^{*} \lim$ denotes weak$*$limit in the space of
bounded Borel measures on ${\T}$.

\noindent{}As mentioned by Nadkarni in \cite{Nadkarnibook}, the infinite product
$$
\prod_{l=1}^{+\infty}\big|P_{j_l}\big(z)|^2$$
\noindent{}taken over a subsequence $j_1<j_2<j_3<\cdots,$ also represents the maximal spectral type (up to discrete measure) of some rank one maps. In case $j_l \neq l$ for infinitely many $l$, the maps acts on an infinite measure space.\\

We will denote by $\xi_n(x)\setdef{1\over{\sqrt{|B_n|}}} \1_{B_n}(x)$ the
characteristic function of the $n^{\hbox{th}}$--base,
 normalized
so that the 2-norm equals 1. We recall that the associated Koopman operator  $U_T \xi$  is define by
$U_T \xi(x)=\xi(T^{-1}x)$ on $L^2(X)$ and, for any $\xi \in L^2(X)$ there corresponds a positive measure
$\mu_\xi$ on $\T$, the unit circle, defined by
$\hat{\mu}_\xi(n)= \langle U_T^n\xi,\xi \rangle$. With this notation, put  $\mu_n=\mu_{\xi_n}$. Notice that
$${\mathcal C}=\{\{T^k(B_n)\}_{k=0}^{h_n-1}\}_{n=0}^{\infty} \eqno{(1.2)}$$
generates a dense  subalgebra of the Borel $\sigma$--algebra,
(here we are using the metric (modulo sets of measure zero) given by
$d(A,B)=\big|A\triangle B\big|$). Then
the subspace generated by  the span of
$\{U_T^k(\xi_n):1\leq n<\infty,0\leq k<h_n \}= \hbox{ span of
}\{\1_{T^k(B_n)}:1\leq n<\infty,0\leq k<h_n \}$
is dense in  $L^2(X)$.

We end this section by introducing the notion of Mahler measure and stating our second main result. The Mahler measure of $P_k$ is defined by
\[
M(P_k)=\exp \Biggl(\bigintss_{\T} \log\big(\big|P_k(z)\big|\big) dz \Biggr).
\]
Using Jensen's formula \cite{Rudin}, it can be shown that
\[
M(P_k)=\frac1{\sqrt{m_k}}\prod_{|\alpha|>1} |\alpha|,
\]
where, $\alpha$ denoted the zero of the polynomial $\sqrt{m_k}P_k$. In this definition, an empty product is assumed to be $1$ so the Mahler measure of the non-zero constant polynomial $P(x)=a$ is $|a|$. A nice account on the subject may be founded in \cite[pp.2-11]{Ward}, \cite{Borwein}.\\

\noindent{}Here are some elementary properties of  the Mahler measure. But, we provide a proof for the reader's convenience.
\begin{Prop}\label{basic}Let $(X,\cb,\rho)$ be a probability space. Then,
for any two positive functions $f,g \in L^1(X,\rho)$, we have
\begin{enumerate}[i)]
\item $M(f)$ is a limit of the norms $||f||_{\delta}$ as $\delta$ goes to $0$, that is,  \[
||f||_{\delta} \setdef \Biggl(\bigintss f^{\delta} d\rho\Biggr)^{\frac1{\delta}} \tend{\delta}{0} M(f),
\]
provided that $\log(f)$ is integrable.
\item If $\rho\Big\{ f >0 \Big\} <1$ then $M(f)=0$.
\item If $0<p<q<1$, then $\bigl\|f\bigr\|_p \leq \bigl\|f\bigr\|_q$.
\item If $0<p< 1$, then $M(f) \leq \bigl\|f\bigr\|_p$.
\item $\ds \lim_{\delta \longrightarrow 0}\int f^\delta d\rho = \rho\Big\{f>0\Big\}.$
\item $M(f) \leq \bigl\|f\bigr\|_1$.
\item $M(fg)=M(f)M(g)$.
\end{enumerate}
\end{Prop}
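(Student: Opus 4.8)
The plan is to establish these properties roughly in an order that lets the harder analytic statements rest on the elementary ones, treating the Mahler measure throughout via its defining formula $M(f)=\exp\bigl(\int \log f\, d\rho\bigr)$ when $\log f$ is integrable. I would begin with the multiplicativity (vii), which is immediate: $\log(fg)=\log f+\log g$ pointwise, so integrating and exponentiating gives $M(fg)=M(f)M(g)$, with the proviso that the separate integrals make sense. Next I would dispatch (ii): if $\rho\{f>0\}<1$, then $f=0$ on a set of positive measure, so $\log f=-\infty$ there, forcing $\int\log f\,d\rho=-\infty$ and hence $M(f)=0$ by convention. Statement (v) is a clean application of dominated/monotone convergence: since $0\le f^\delta\le \max(1,f)\in L^1$ and $f^\delta(x)\to \1_{\{f>0\}}(x)$ pointwise as $\delta\to 0$ (because $f^\delta\to 1$ where $f>0$ and $f^\delta=0$ where $f=0$), we get $\int f^\delta\,d\rho\to\rho\{f>0\}$.

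The heart of the proposition is (i), the statement that $\|f\|_\delta\to M(f)$ as $\delta\to 0$ when $\log f$ is integrable. The standard route is to write $f^\delta=\exp(\delta\log f)$ and expand: $\frac1\delta\log\!\int f^\delta\,d\rho = \frac1\delta\log\!\int e^{\delta\log f}\,d\rho$. As $\delta\to 0$ one has $e^{\delta\log f}=1+\delta\log f+o(\delta)$, so formally $\int f^\delta\,d\rho=1+\delta\int\log f\,d\rho+o(\delta)$, whence $\frac1\delta\log(\cdots)\to\int\log f\,d\rho$, giving $\|f\|_\delta\to\exp(\int\log f\,d\rho)=M(f)$. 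Making this rigorous is where the main obstacle lies: one must justify passing the limit inside the integral uniformly enough, since $\log f$ need only be integrable and $f^\delta$ may be large where $f$ is large. I expect the cleanest justification to split $\{f\ge 1\}$ and $\{f<1\}$, use the elementary inequality $\frac{t^\delta-1}{\delta}\to \log t$ monotonically on each piece (monotone in $\delta$ with a definite sign depending on whether $t\ge 1$ or $t<1$), and invoke monotone convergence on each part together with the integrability of $\log f$ as the dominating control; this is the one place real care is needed.

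Finally, the three inequalities (iii), (iv), (vi) follow from convexity once (i) is in hand. For (iii) with $0<p<q<1$, I would apply Jensen's inequality to the concave power map, or equivalently Lyapunov's inequality for the nondecreasing behavior of $L^p$ norms on a probability space, giving $\|f\|_p\le\|f\|_q$. For (iv) and (vi) I would use the monotonicity established in (i)–(iii): since $\|f\|_\delta$ decreases to $M(f)$ as $\delta\downarrow 0$ (by (iii) the map $\delta\mapsto\|f\|_\delta$ is nondecreasing, and by (i) its limit at $0$ is $M(f)$), one gets $M(f)=\lim_{\delta\to0}\|f\|_\delta\le\|f\|_p$ for any $0<p<1$, which is (iv); and taking the limit $p\to 1$, or applying the same monotonicity up to $\|f\|_1$, yields $M(f)\le\|f\|_1$, which is (vi). The only subtlety here is handling the case $\log f\notin L^1$: then $M(f)=0$ by convention and all the inequalities hold trivially, so the convexity arguments only need to be run under the integrability hypothesis.
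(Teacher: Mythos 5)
Your proposal is correct, and for the central item i) it follows essentially the same route as the paper: both arguments reduce to the pointwise convergence $\frac{f^\delta-1}{\delta}\to\log f$ combined with $\log x\sim x-1$ as $x\to 1$, and differ only in how the limit is passed under the integral sign (the paper derives the domination $\bigl|\frac{x^\delta-1}{\delta}\bigr|\le x+|\log x|$ from the Mean Value Theorem and applies dominated convergence, while you split into $\{f\ge 1\}$ and $\{f<1\}$ and use monotone convergence; both are rigorous). The genuine differences lie elsewhere. For ii), the paper does not simply invoke the convention $\exp(-\infty)=0$: it runs a H\"{o}lder-inequality estimate, with $B=\{f>0\}$ and $\delta=1/k$, showing $\|f\|_{\delta}\le\bigl(\int f\,d\rho\bigr)\rho(B)^{k-1}\to 0$, which yields the stronger quantitative fact that the norms $\|f\|_\delta$ themselves tend to $0$ when $\rho(B)<1$, so the limit characterization in i) stays consistent even without integrability of $\log f$; your one-line argument proves only the literal claim $M(f)=0$, which is all the statement asks, but buys less. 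For iv), the paper applies Jensen's inequality directly to the concave logarithm, giving $\int\log f\,d\rho\le\log\|f\|_p$, whereas you deduce it from the monotonicity in iii) together with the limit in i); your route forces the extra case distinction when $\log f\notin L^1(\rho)$, which you correctly flag and dispose of by convention. Finally, the paper explicitly leaves v), vi) and vii) to the reader, while you supply complete proofs (dominated convergence with the bound $f^\delta\le\max(1,f)$ for v), additivity of the logarithm for vii)); these arguments are correct and fill in what the paper omits.
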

\begin{proof}We start by proving ii). Without loss of generality, assume that $f \geq 0$ and put
 $$B=\Big\{ f >0 \Big\},$$
and let $\delta=1/k$  be in $]0,1[$, $k \in \N^*$. Then $1/(1/\delta)+1/(1-\delta)=1/k+(k-1)/k=1$. Hence, by H\"{o}lder inequality, we have
\begin{eqnarray*}
\bigintss f^{\delta} d\rho &=& \bigintss f^{1/k} .\1_B d\rho \\
&\leq& \Biggl(\bigintss (f^{1/k})^k dz\Biggr)^{1/k} \Biggl(\bigintss \1_B^{k/k-1} dz\Biggr)^{k-1/k}\\
&\leq& \Biggl(\bigintss f d\rho\Biggr)^{1/k} \Biggl(\bigintss \1_B dz\Biggr)^{k-1/k}\\
&\leq& \Biggl(\bigintss f d\rho\Biggr)^{1/k} \Bigl(\rho(B)\Bigr)^{(k-1)/k}
\end{eqnarray*}
Therefore we have proved
\begin{eqnarray*}
||f||_{\delta} &\leq& \Biggl(\bigintss f d\rho \Biggr) \Bigl(\rho(B)\Bigr)^{(1-\delta)/\delta}\\
&\leq& \Biggl(\bigintss f d\rho \Biggr) \Big(\rho(B)\Big)^{k-1} \tend{k}{+\infty}0,
\end{eqnarray*}
To prove i), apply the Mean Value Theorem to the following functions
\[\left\{
  \begin{array}{ll}
    \delta \longmapsto x^\delta, & \hbox{if $x \in ]0,1[;$}  \\
    t \longmapsto t^\delta , & \hbox{if $x>1,$}
  \end{array}
\right.
\]
Hence, for any $\delta \in ]0,1[$ and for any $x>0$, we have
\[
\Biggl|\frac{x^\delta-1}{\delta}\Biggr| \leq x+\Bigl|\log(x)\Bigr|.
\]
Furthermore, it is easy to see that
\[
\frac{f^\delta-1}{\delta}=\frac{e^{\delta \log(f)}-1}{\delta}
\tend{\delta}{0}\log(f),
\]
and, by Lebesgue Dominated Convergence Theorem, we get that
\[
\bigintss\frac{f^\delta-1}{\delta} d\rho \tend{\delta}{0} \bigintss \log(f) d\rho.
\]
On the other hand, for any $\delta \in ]0,1[$
\[
\bigl|\bigl|f\bigr|\bigr|_{\delta}=\exp\Biggl({\frac1{\delta}}\log\Biggl(\bigintss f^{\delta} d\rho\Biggr)\Biggr),
\]
and for a sufficiently small $\delta$, we can write
\[
{\frac1{\delta}}\log\Biggl(\bigintss f^{\delta} d\rho\Biggr)\sim
\bigintss \frac{f^\delta-1}{\delta} d\rho
\]
since $\log(x) \sim x-1$ as $x \longrightarrow 1$.
Summarizing we have proved
\[
\lim_{\delta \longrightarrow 0}||f||_{\delta}=exp\Biggl(\bigintss \log(f) d\rho\Biggr)=M(f).
\]
For the proof of iii) and iv), notice that the function $x \mapsto \ds x^\frac{q}{p}$ is a convex function and
$x \mapsto \log(x)$ is a concave function. Applying Jensen's inequality to $\ds \bigintss \bigl|f\bigr|^p d\rho$ we get
$$\bigl\|f\bigr\|_p \leq \bigl\|f\bigr\|_q,~~~~~~~~~
\bigintss \log(\bigl|f\bigr|) d\rho \leq \log\Bigl(\bigr\|f\bigl\|_p \Bigr),$$
and this finishes the proof, the rest of the proof is left to the reader.
\end{proof}
Szeg\"{o}-Kolmogorov-Krein theorem established a connection between a given measure and the Mahler measure of its derivative. Precisely, we have
\begin{Th}[Szeg\"{o}, Kolmogorov-Krein {\cite[p.49]{Hoffman}, \cite[p.136]{Gamelin}}.]\label{Szego}Let $\sigma$ be a finite positive Baire measure on the unit circle and let $h$ be the derivative of $\sigma$ with respect to normalized Lebesgue measure. Then, for any $r>0$,
\[
M(h)=\inf_{P}\Big\|1-P\Big\|_r^{r}=\inf_{P}\Biggl(\bigintss \Big|1-P\Big|^r h(z) dz\Biggr),
\]
where $P$ ranges over all analytic trigonometric polynomials with zero constant term. The right side is $0$ if $\log(h)$ is not integrable.
\end{Th}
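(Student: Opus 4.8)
The plan is to prove the two expressions on the right coincide by the mere definition of the norm (so nothing is to be shown there), and then to establish the equality $M(h)=\inf_P\|1-P\|_r^r$ by a two-sided estimate, following the classical Szeg\H{o}--Kolmogorov--Krein route through the $H^r$ theory of outer functions. Throughout I set $Q=1-P$, so that $Q$ ranges over all analytic polynomials with $Q(0)=1$, and I must show $\inf_Q\int_{\T}|Q|^r h\,dz=M(h)$, where $dz$ is normalized Lebesgue measure.

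For the lower bound I would combine the arithmetic--geometric mean inequality (Jensen's inequality for the concave $\log$, exactly as exploited in Proposition \ref{basic}) with Jensen's formula. For any admissible $Q$,
\[
\int_{\T}|Q|^r h\,dz \;\geq\; \exp\Bigl(\int_{\T}\log\bigl(|Q|^r h\bigr)\,dz\Bigr)
= \exp\Bigl(r\int_{\T}\log|Q|\,dz\Bigr)\,M(h).
\]
Since $Q$ is analytic with $Q(0)=1$, Jensen's formula yields $\int_{\T}\log|Q|\,dz\geq\log|Q(0)|=0$, whence $\int_{\T}|Q|^r h\,dz\geq M(h)$ for every such $Q$, and therefore $\inf_P\|1-P\|_r^r\geq M(h)$.

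For the upper bound, assuming first that $\log h$ is integrable (so $M(h)>0$), I would introduce the outer function determined by $|F|=h^{1/r}$ on $\T$,
\[
F(z)=\exp\Bigl(\frac1r\int_0^{2\pi}\frac{e^{i\theta}+z}{e^{i\theta}-z}\log h(e^{i\theta})\,\frac{d\theta}{2\pi}\Bigr),
\]
so that $F\in H^r$, $|F|^r=h$ a.e.\ on $\T$, and $F(0)=M(h)^{1/r}$. Writing $\int_{\T}|Q|^r h\,dz=\|QF\|_{H^r}^r$ reduces the problem to approximating the constant $F(0)$ by functions $QF$. The decisive point is that an outer function is a cyclic vector for the shift on $H^r$, so its polynomial multiples are dense in $H^r$; hence there are polynomials $p_n$ with $p_nF\to 1$ in $H^r$. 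Evaluating at the origin gives $p_n(0)F(0)\to 1$, so $p_n(0)\to 1/F(0)$, and setting $Q_n=p_n/p_n(0)$ (legitimate for large $n$) produces admissible $Q_n$ with $Q_n(0)=1$ and $Q_nF\to F(0)$ in $H^r$. Consequently $\int_{\T}|Q_n|^r h\,dz\to|F(0)|^r=M(h)$, which gives $\inf_P\|1-P\|_r^r\leq M(h)$ and closes this case.

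It remains to treat the degenerate case $\log h\notin L^1$, where $M(h)=0$; here the lower bound only gives the trivial inequality $\inf\geq 0$, so the genuine content is to show the infimum actually vanishes. I would establish this directly, showing that $1$ lies in the $L^r(h\,d\lambda)$-closure of $\{P:P(0)=0\}$ (the non-Szeg\H{o} alternative of the shift theory, via a truncation of $h$), so that $\inf_P\|1-P\|_r^r=0=M(h)$. The main obstacle I anticipate is precisely the upper bound for general $r>0$: justifying the cyclicity of the outer function $F$ in $H^r$ when $r\neq 2$, where no orthogonal projection is available to produce the approximants, and controlling the convergence $Q_nF\to F(0)$ together with the normalization $p_n(0)\to 1/F(0)$. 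The degenerate case, though classical, likewise demands care to match the convention $M(h)=0$.
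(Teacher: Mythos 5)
The paper itself gives no proof of this statement: it is quoted as a classical theorem, with the citations to Hoffman (p.~49) and Gamelin (p.~136) standing in for the argument, so your proposal can only be measured against the classical proofs in those sources. For the ``Szeg\"{o} half'' --- the identity $M(h)=\inf_P\int|1-P|^r h\,dz$ --- your outline is essentially the standard one and is sound: the lower bound (Jensen's inequality for $\log$, plus $\int\log|Q|\,dz\ge\log|Q(0)|=0$ from Jensen's formula) is complete, and the upper bound via the outer function $F$ with $|F|^r=h$ and $F(0)=M(h)^{1/r}$, the normalization $Q_n=p_n/p_n(0)$, and continuity of evaluation at $0$ (for $0<r<1$ this follows from $|f(0)|^r\le\int|f|^r\,dz$, by subharmonicity) is correct. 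The obstacle you flag --- density of polynomial multiples of an outer function in $H^r$ for every $0<r<\infty$ --- is a genuine classical theorem (the generalized Beurling invariant-subspace theorem), so invoking it is legitimate; alternatively one can avoid it entirely, closer to the cited sources, by first assuming $\log h$ bounded, taking $Q=\exp\bigl(G(0)-G\bigr)$ where $G$ is the analytic completion of $\tfrac1r\log h$ (so that $|Q|^rh\equiv M(h)$ and $Q\in H^\infty$), passing to Fej\'er means of $Q$, and then handling general $h$ and the degenerate case by the truncation $h_n=\max(h,1/n)$ and monotone convergence.

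The genuine gap is in the step you dismiss as ``by the mere definition of the norm.'' The hypothesis introduces a general finite positive Baire measure $\sigma$, of which $h$ is only the absolutely continuous derivative; in Hoffman and Gamelin, and in the way this paper actually uses the theorem (see the remark following the proof of Theorem \ref{dicho}, where $M(d\mu/dz)=\inf_{P\in A_0}\int|1-P|^2\,d\mu$ is applied to the maximal spectral type $\mu$ of a rank one map --- a measure that is typically singular), the first infimum is $\inf_P\bigl\|1-P\bigr\|^r_{L^r(\sigma)}$, singular part included. The theorem carries the names Kolmogorov and Krein precisely for this reason: their contribution is that the singular part of $\sigma$ is invisible to the prediction problem, i.e. $\inf_P\int|1-P|^r\,d\sigma=\inf_P\int|1-P|^r h\,dz$. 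Your reading collapses this to a tautology, so your argument proves only the absolutely continuous case; for a purely singular $\sigma$ it asserts nothing, whereas the theorem asserts the infimum is $0$. The missing inequality $\inf_P\int|1-P|^r\,d\sigma\le M(h)$ requires polynomials that simultaneously realize the absolutely continuous infimum and are small in $L^r(\sigma_s)$, where $\sigma_s$ is carried by a Lebesgue-null set; classically this is done with peak-type functions from the disc algebra on that null set, or by the Helson--Lowdenslager projection argument in $L^2(\sigma)$, and it does not follow formally from the absolutely continuous case.
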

Clearly, Szeg\"{o}-Kolmogorov-Krein theorem gives an alternative definition to Mahler measure (that is, the Malher measure
of a given measure is the Mahler measure of its derivative). For other definitions, we refer the reader to \cite{degot}.\\

The Mahler measure is very useful in number theory and the use of this quantity in number theory is essentially due to Mahler \cite{Mahlerintro}.\\

Following Helson and Szeg\"{o} \cite{HS}, Szeg\"{o}-Kolmogorov-Krein theorem solved the first problem of the theory of prediction and the second problem of this theory was solved by Kolmogorov as follows

\begin{Th}[Kolmogorov {\cite[p.49]{Grenard}}.]%\label{kolmogorov} 
Let $\sigma$ be a finite positive Baire measure on the unit circle and let $h$ be the derivative of $\sigma$ with respect to normalized Lebesgue measure. Then,
\[
M_0(h)=\inf_{P \in \cm_0}\Big\|1-P\Big\|^2_2=\Biggl(\bigintss \frac1{h} dz \Biggr)^{-1},
\]
where $\cm_0$ is a closed subspace generated by $\{z^n, n \in \Z \setminus\{0\} \}$. The right side is $0$ if $\log(h)$ is not integrable.
\end{Th}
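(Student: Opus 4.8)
The plan is to read the quantity $M_0(h)=\inf_{P\in\cm_0}\|1-P\|_2^2$ as the squared distance, in the Hilbert space $L^2(\si)$, from the constant function $1$ to the closed subspace $\cm_0=\ov{\mathrm{span}}\{z^n:n\in\Z\setminus\{0\}\}$; here $\|1-P\|_2^2=\int_\T|1-P|^2\,d\si=\int_\T|1-P|^2 h\,dz+\int_\T|1-P|^2\,d\si_s$, exactly in the spirit of Theorem \ref{Szego}. Since $\si$ is finite we have $1\in L^2(\si)$, so by the projection theorem there is a best approximant, and writing $g:=1-P_{\min}$ for the associated error, $g$ is characterized by the orthogonality relations $\langle g,z^n\rangle_{L^2(\si)}=0$ for every $n\neq0$, together with $d:=M_0(h)=\|g\|_{L^2(\si)}^2$.

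First I would establish the lower bound $d\ge\big(\int 1/h\big)^{-1}$ by a weighted Cauchy--Schwarz estimate. It suffices to bound $\int_\T|1-P|^2\,d\si$ from below for a finite combination $P=\sum_{n\neq0}c_nz^n$, the general case following by density and continuity. Because $\int_\T z^n\,dz=0$ for $n\neq0$, one has $\int_\T(1-P)\,dz=1$, whence
\[
1=\Big|\int_\T(1-P)\sqrt h\cdot\tfrac1{\sqrt h}\,dz\Big|^2\le\Big(\int_\T|1-P|^2h\,dz\Big)\Big(\int_\T\tfrac1h\,dz\Big),
\]
so that $\int_\T|1-P|^2\,d\si\ge\int_\T|1-P|^2h\,dz\ge\big(\int 1/h\big)^{-1}$. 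If $h$ vanishes on a set of positive measure, or $\int 1/h=\infty$, the right-hand side is $0$ and the bound is vacuous.

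For the matching value I would exploit the orthogonality of $g$ directly. The relations $\int_\T g\,\ov{z}^n\,d\si=0$ for all $n\neq0$ say that the finite complex measure $d\nu:=g\,d\si$ has all its Fourier coefficients vanishing except at $0$, whence $\nu=c\,\lambda$ with $c=\hat\nu(0)=\int_\T g\,d\si$. Matching absolutely continuous and singular parts in $g\,d\si=c\,dz$ forces $g=0$ $\si_s$-a.e. and $g\,h=c$ $\lambda$-a.e.; in particular $g=c/h$ on $\{h>0\}$. Using $g\perp\cm_0$ and $1-g=P_{\min}\in\cm_0$ gives $d=\|g\|^2=\langle g,1\rangle=\int_\T g\,d\si=c$, while directly $\|g\|^2=\int_\T|g|^2h\,dz=c^2\int_\T 1/h$. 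Combining yields $c=c^2\int 1/h$. If $\int 1/h<\infty$, the lower bound shows $d=c>0$, hence $c=\big(\int 1/h\big)^{-1}$ and $M_0(h)=\big(\int 1/h\big)^{-1}$; if $\int 1/h=\infty$, then $c>0$ is impossible, since it would force $\|g\|^2=\infty$ against $d\le\|1\|^2=\si(\T)<\infty$, so $d=0=\big(\int 1/h\big)^{-1}$. Finally the clause about $\log h$ follows since Jensen's inequality gives $\int 1/h\ge\exp\big(-\int\log h\big)$, so $\log h\notin L^1$ forces $\int 1/h=\infty$ and hence $M_0(h)=0$.

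The main obstacle is the correct bookkeeping of the singular part $\si_s$: one must show that the optimal error $g$ annihilates $\si_s$, so that the answer depends only on the density $h$, and this is precisely what the passage from the orthogonality relations to the measure identity $g\,d\si=c\,dz$ delivers. The other delicate point is the justification, when $\int 1/h<\infty$, that the constant $c$ produced by the projection is nonzero and equal to the reciprocal harmonic mean; I would secure this by playing the Cauchy--Schwarz lower bound against the quadratic identity $c=c^2\int 1/h$, which pins down $c$ without having to verify by hand that the candidate $c/h$ lies in $1-\cm_0$.
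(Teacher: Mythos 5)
The paper itself offers no proof of this statement: it is quoted as Kolmogorov's classical prediction theorem with a citation to Grenander--Rosenblatt, so there is no internal argument to compare yours against; your proposal has to stand on its own, and it does. The structure is the standard (and correct) Hilbert-space proof. The lower bound via $\int_{\T}(1-P)\,dz=1$ and Cauchy--Schwarz is sound, including the remark that it is vacuous when $\bigl(\int 1/h\,dz\bigr)^{-1}=0$; the identification of the optimal error $g=1-P_{\min}$ through the projection theorem, the orthogonality relations $\int g\,\bar z^{\,n}\,d\si=0$ for $n\neq 0$, the uniqueness theorem for Fourier--Stieltjes coefficients (giving $g\,d\si=c\,dz$ with $c=\int g\,d\si$), and the matching of Lebesgue decompositions (so that $g=0$ $\si_s$-a.e.\ and $gh=c$ $\lambda$-a.e.) is exactly the right mechanism, and it is what makes the singular part drop out of the answer. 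One imprecision deserves a one-line patch: the identity $\|g\|^2=c^2\int 1/h\,dz$ is, as written, an integral over $\{h>0\}$ only, so in the case $\int_{\T} 1/h\,dz=\infty$ caused by $h$ vanishing on a set of positive Lebesgue measure your contradiction does not immediately apply; but the identity $gh=c$ $\lambda$-a.e.\ already forces $c=0$ on $\{h=0\}$ (equivalently, $c>0$ would force $h>0$ a.e.), which closes that case. The final clause is also handled correctly: since $\log^{+}h\le h$ is integrable, non-integrability of $\log h$ means $\int\log h=-\infty$, and Jensen's inequality $\int 1/h\,dz\ge\exp\bigl(-\int\log h\,dz\bigr)$ then gives $\int 1/h\,dz=\infty$, hence $M_0(h)=0$.
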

In his 1984's paper {\cite{Nakazi}, Nakazi extended simultaneously Kolmogorov prediction theorem and the following result due to Nakazi and Takahashi \cite{Nakazi-Takahashi}}
\begin{Th}[Nakazi-Takahashi{\cite{Nakazi-Takahashi}, \cite{ahamid}}.]%\label{Nakazi} 
Let $\sigma$ be a finite positive Baire measure on the unit circle and let $h$ be the derivative of $\sigma$ with respect to normalized Lebesgue measure. Then,
\[
M_n(h)=\inf_{P \in \cm_n}\Big\|1-P\Big\|_2=\Bigl(\sum_{k=0}^{n}|\alpha_k|^2\Bigr)^{\frac12},
\]
where $\cm_n$ is a closed subspace generated by $\{z^k, k \geq (n+1) \}$ and $(\alpha_k)_{k=0}^{+\infty}$ is
the Fourier coefficients of the associated outer function $\phi$ to $h$ (that is, $h=|\phi|^2=
\big|\sum_{k=0}^{+\infty}\alpha_k z^k|^2$). The right side is $0$ if $\log(h)$ is not integrable.
\end{Th}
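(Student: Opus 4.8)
The plan is to transport the variational problem defining $M_n(h)$ from $L^2(h\,dz)$ (where, following the convention of Theorem~\ref{Szego}, $\bigl\|g\bigr\|_2^2=\int_\T|g|^2h\,dz$) into the ordinary Hardy space $H^2=H^2(dz)$ by multiplication by the outer function $\phi$, after which it reduces to a one-line orthogonal projection. First I dispose of the degenerate case $\log(h)\notin L^1(\T)$. The $r=2$ instance of Szeg\H{o}'s theorem (Theorem~\ref{Szego}) gives $\operatorname{dist}_{L^2(h\,dz)}(1,\cm_0)=0$, i.e.\ $1\in\cm_0=z\,\ch$, where $\ch\setdef\ov{\mathrm{span}}\{z^k:k\ge0\}$ in $L^2(h\,dz)$; hence $z^{-1}\in\ch$. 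Since multiplication by $z$ is unitary on $L^2(h\,dz)$ (as $|z|=1$), $\ch$ is shift invariant, and the membership $z^{-1}\in\ch$ forces $z^{-1}\ch=\ch$, so $\ch$ is invariant under multiplication by both $z$ and $z^{-1}$ and, containing $1$, equals $L^2(h\,dz)$. Then $\cm_n=z^{n+1}\ch=L^2(h\,dz)\ni1$, so $M_n(h)=0$, matching the stated convention on the right-hand side.

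Assume henceforth $\log(h)\in L^1(\T)$, so there is an outer function $\phi\in H^2$ with $|\phi|^2=h$ a.e.\ and Taylor expansion $\phi(z)=\sum_{k\ge0}\alpha_k z^k$. I introduce the multiplication map $V\colon L^2(h\,dz)\to L^2(dz)$, $Vf=\phi f$. Because $|\phi|^2=h$, one has $\int_\T|\phi f|^2\,dz=\int_\T|f|^2h\,dz$, so $V$ is a linear isometry; it sends $1\mapsto\phi$ and carries the closed subspace $\cm_n=\ov{\mathrm{span}}\{z^k:k\ge n+1\}$ isometrically onto the closed subspace $\ov{\mathrm{span}}_{L^2(dz)}\{\phi z^k:k\ge n+1\}$.

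The decisive step—and the one I expect to be the main obstacle—is the identification of this image with $z^{n+1}H^2$. As multiplication by $z^{n+1}$ is unitary on $L^2(dz)$, it suffices to prove $\ov{\mathrm{span}}\{\phi z^k:k\ge0\}=H^2$, which is exactly the assertion that the outer function $\phi$ is a cyclic vector for the shift on $H^2$; this is Beurling's theorem (the shift-invariant subspace generated by $\phi$ is $\theta H^2$ with $\theta$ the inner factor of $\phi$, and $\theta=1$ precisely because $\phi$ is outer). Granting it, $V(\cm_n)=z^{n+1}H^2$, and since $V$ preserves norms,
\[
M_n(h)=\inf_{P\in\cm_n}\bigl\|1-P\bigr\|_2=\inf_{Q\in z^{n+1}H^2}\bigl\|\phi-Q\bigr\|_{L^2(dz)}=\operatorname{dist}_{L^2(dz)}\bigl(\phi,\,z^{n+1}H^2\bigr).
\]

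Finally I evaluate this distance, now a routine projection in $L^2(dz)=\bigoplus_{k\in\Z}\C z^k$. Since $z^{n+1}H^2=\ov{\mathrm{span}}\{z^k:k\ge n+1\}$ has orthogonal complement $\ov{\mathrm{span}}\{z^k:k\le n\}$, the part of $\phi=\sum_{k\ge0}\alpha_k z^k$ orthogonal to $z^{n+1}H^2$ is $\sum_{k=0}^{n}\alpha_k z^k$, whence
\[
\operatorname{dist}_{L^2(dz)}\bigl(\phi,\,z^{n+1}H^2\bigr)=\Bigl\|\sum_{k=0}^{n}\alpha_k z^k\Bigr\|_{L^2(dz)}=\Bigl(\sum_{k=0}^{n}|\alpha_k|^2\Bigr)^{1/2},
\]
the claimed value. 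As a check, $n=0$ yields $M_0(h)=|\alpha_0|$ with $|\alpha_0|^2=|\phi(0)|^2=\exp\bigl(\int_\T\log h\,dz\bigr)=M(h)$, recovering the $r=2$ case of Szeg\H{o}'s theorem.
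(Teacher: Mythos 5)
The paper never proves this theorem, so there is no in-paper argument to compare yours against: it is stated purely as background, quoted from Nakazi--Takahashi \cite{Nakazi-Takahashi} and Pourahmadi--Inoue--Kasahara \cite{ahamid}, and the author only remarks afterwards that ``the fundamental ingredients in the proofs'' of these prediction theorems ``are based on the Hardy space Theory.'' Your proposal supplies exactly that Hardy-space argument, and it is correct and complete. The degenerate case is handled properly: the $r=2$ case of Theorem \ref{Szego} gives $1\in\cm_0=z\ch$ in $L^2(h\,dz)$, and your shift-invariance argument ($z^{-1}\in\ch$ forces $z^{-1}\ch=\ch$, hence $\ch\supseteq\overline{\mathrm{span}}\{z^k:k\in\Z\}=L^2(h\,dz)$, the last equality by density of trigonometric polynomials in $L^2$ of a finite Baire measure) is sound. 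In the main case, the transport $Vf=\phi f$ is indeed an isometry of $L^2(h\,dz)$ into $L^2(dz)$; the only step you gloss over is that $V(\cm_n)$ is closed (an isometric image of a complete subspace is complete, hence closed), which is what lets you identify $V(\cm_n)$ with the closed span of $\{\phi z^k:k\ge n+1\}$ rather than merely a dense subset of it --- one sentence fixes this. You correctly isolate Beurling's theorem (cyclicity of outer functions) as the crux giving $\overline{\mathrm{span}}\{\phi z^k:k\ge0\}=H^2$, after which the formula drops out of an orthogonal projection; this is essentially how the original sources argue, so your route is the canonical one rather than a novel alternative. Your reading of the norm as the $L^2(h\,dz)$ norm, following the convention of Theorem \ref{Szego}, is also the right interpretation of the paper's (ambiguously stated) $\|\cdot\|_2$, and your $n=0$ consistency check with $M_0(h)^2=M(h)$ under that convention is accurate.
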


Nakazi Theorem generated considerable interest in computing the predicator error when the index set is $\{1,2,\cdots\}$  with finitely many points of $\Z$ added or deleted. For a recent results we refer the reader to \cite{ahamid}, \cite{ahamid2} and the references therein.

Let us mention that the fundamental ingredients in the proofs of the previews results are based on the Hardy space Theory.
For $0<p<\infty$, the Hardy space $\Ha^p$ is the $L^p(dz)$-closure  of $\{1\}+P_0$, where $P_0$ is the manifold of trigonometric polynomials whose frequencies are in $\big\{1,2,3,\cdots,\big\}$. $\Ha^{\infty}$ is defined to be the weak star closure of  $\{1\}+P_0$ in $L^{\infty}(dz)$.\\

We further have the following Zygmund's theorem \cite[p.96]{koosis}

\begin{Th}[$L log L$ Zygmund's theorem.]\label{zyg} Let $|f(\theta)| \log^{+}(|f(\theta)) \in L^1(-\pi,\pi)$. Then
	$$F(z)=\bigintss \frac{e^{it}+z}{e^{it}-z} f(t) dt $$ 
belonging to $H^1$.
\end{Th}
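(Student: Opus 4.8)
The plan is to show $F\in H^{1}$ by first reducing to a nonnegative density and then bounding the conjugate (imaginary) part of $F$ in $L^{1}$ by exploiting the $L\log L$ hypothesis through an analytic-function device.

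First, since $f\mapsto F$ is linear and the kernel $\frac{e^{it}+z}{e^{it}-z}=1+2\sum_{n\ge 1}(ze^{-it})^{n}$ is analytic in $z\in\D$, I would split $f$ into real and imaginary parts and each of those into positive and negative parts. Each of the four resulting functions is dominated by $|f|$, hence still satisfies the hypothesis because $t\mapsto t\log^{+}t$ is increasing; so it suffices to treat $f\ge 0$. For such an $f$ (assume $f\not\equiv 0$), write $F=u+iv$ with $u=\mathrm{Re}\,F$ the Poisson integral of $f$, so $u>0$ on $\D$ and $\int_{-\pi}^{\pi}u(re^{i\theta})\,d\theta$ is independent of $r$. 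Thus $\sup_{r<1}\|u(re^{i\cdot})\|_{1}<\infty$, and the whole problem reduces to the uniform bound $\sup_{r<1}\int_{-\pi}^{\pi}|v(re^{i\theta})|\,d\theta<\infty$ for the conjugate function.

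The crucial point is that $F$ maps $\D$ into the right half-plane. I would fix the principal branch of the logarithm there, so $\log F=\log|F|+i\arg F$ with $|\arg F|<\pi/2$, and note that $F\log F$ is analytic on $\D$. Applying the mean value property to the harmonic function $\mathrm{Re}(F\log F)=u\log|F|-v\arg F$ gives, for every $r<1$,
\[
\int_{-\pi}^{\pi}\big(u\log|F|-v\arg F\big)(re^{i\theta})\,d\theta=2\pi\,\mathrm{Re}\big(F(0)\log F(0)\big),
\]
a finite constant which we call $2\pi C$ (here $F(0)=\int f>0$). Since $\arg F$ and $v$ always have the same sign, $v\arg F\ge 0$; on $\{|v|\ge u\}$ one has $|\arg F|\ge\pi/4$, hence $v\arg F\ge\frac{\pi}{4}|v|$ there, while $\{|v|<u\}$ contributes at most $\|u\|_{1}$. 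This yields $\int|v|\le\frac{4}{\pi}\int v\arg F+\|u\|_{1}=\frac{4}{\pi}\big(\int u\log|F|-2\pi C\big)+\|u\|_{1}$. It remains to bound $\int u\log|F|$: splitting according to $|v|\le u$ or $|v|>u$ and using $\log|F|\le\log(\sqrt{2}\max(u,|v|))$, the only delicate term is $u\log|v|$ on $\{|v|>u\}$, where the elementary inequality $\log x\le x/e$ (with $x=|v|/u$) gives $u\log|v|\le u\log^{+}u+|v|/e$. Hence $\int u\log|F|\le\int u\log^{+}u+\frac1e\int|v|+(\mathrm{const})\|u\|_{1}$. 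Substituting into the previous estimate produces a term $\frac{4}{\pi e}\int|v|$ on the right, and since $\frac{4}{\pi e}<1$ it can be absorbed into the left, giving $\int|v|\le A\int u\log^{+}u+B\|u\|_{1}+D$. Finally, because $t\mapsto t\log^{+}t$ is convex and $u$ is (a constant times) a Poisson average of $f$, Jensen's inequality bounds $\int u\log^{+}u$ by a constant multiple of $\int f\log^{+}f$ plus a constant, uniformly in $r$. Combining everything, $\sup_{r<1}\int|F|\le\sup_{r}\int(u+|v|)<\infty$, i.e. $F\in H^{1}$.

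The main obstacle is precisely the cross term $u\log|v|$ and the ensuing absorption: naively $\int u\log|F|$ reintroduces $|v|$ through $\log|F|$, so the estimate appears circular. The mechanism breaking the circularity is the $F\log F$ identity, which converts the logarithmic growth into the nonnegative quantity $v\arg F$ via the mean value property, together with the precise constant $\frac{4}{\pi e}<1$ that makes the absorption legitimate. Getting these constants to cooperate, and justifying the branch of $\log F$ and the mean value property only at radii $r<1$ (so as to avoid any boundary behaviour), are the steps requiring the most care.
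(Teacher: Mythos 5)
Your proof is correct. Note, though, that there is nothing in the paper to compare it against: Theorem \ref{zyg} is stated without proof, cited from Koosis's book, and used as a black box in the proof of Theorem \ref{dicho}. Your argument is the classical Zygmund proof (essentially the one in the cited source): reduction by linearity to $f\ge 0$, so that $F$ maps $\D$ into the right half-plane; the mean value identity for $\mathrm{Re}(F\log F)=u\log|F|-v\arg F$ at radii $r<1$; the sign observation $v\arg F\ge 0$ together with $v\arg F\ge\frac{\pi}{4}|v|$ on $\{|v|\ge u\}$; the bound $u\log|v|\le u\log^{+}u+|v|/e$ from $\log x\le x/e$, whose constant $\frac{4}{\pi e}<1$ permits the absorption; and Jensen's inequality for the convex increasing function $t\mapsto t\log^{+}t$ against the Poisson kernel, which gives the bound on $\int u\log^{+}u$ uniformly in $r$. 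All steps check out, including the two points that genuinely require care and that you flag explicitly: everything is done at a fixed radius $r<1$, where $F$ is continuous and nonvanishing ($u>0$), so all integrals are finite, the principal branch of $\log F$ is well defined, and the absorption of $\frac{4}{\pi e}\int|v|$ into the left-hand side is legitimate rather than a manipulation of possibly infinite quantities.
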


Based on Szeg\"{o}-Kolmogorov-Krein theorem (Theorem \ref{Szego}) and its allies,  el Abdalaoui-Nadkarni proved the following theorem \cite{elabdal-Nad} that we need here. 

%\begin{Th}[Nadkarni theorem, 2012]\label{Nad1}Let $T$ be a rank one maps with the cutting parameter $(m_k)$ and spacers parameter $((a_j^{(k)})_{j=1}^{m_k})_{k=0}^\infty$ and let $\mu=\prod_{j \geq 0} \big|P_j\big|^2$ be its spectral type. Then
%$$M\Biggl(\sqrt{\frac{d\mu}{dz}}\Biggr)=\prod_{j=0}^{+\infty}M\big(P_j\big),$$
%\noindent{}where $\ds \frac{d\mu}{dz}$ is a derivative of $\mu$.
%\end{Th}
%\noindent{}In , Theorem \ref{Nad1} is stated is the following form
\begin{Th}\label{elabdal-Nad1}Let $T$ be a rank one maps with the cutting parameter $(m_k)$ and spacers parameter $((a_j^{(k)})_{j=1}^{m_k})_{k=0}^\infty$ and let $\mu=\prod_{j \geq 0} \big|P_j\big|^2$ be its spectral type. Then
$$M\Biggl(\frac{d\mu}{dz}\Biggr)=\prod_{j=0}^{+\infty}M\big(P_j^2\big)=\lim_{n\rightarrow \infty}\exp\Biggl\{\bigintss_{S^1}\log \bigl(\prod_{j=1}^n |P_j(z)|^2 \bigr)dz\Biggr\},$$
\noindent{}where $\ds \frac{d\mu}{dz}$ is a derivative of $\mu$.
\end{Th}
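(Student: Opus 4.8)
The plan is to prove the two asserted equalities separately, reserving the real work for the first one. Write $h\egdef\frac{d\mu}{dz}$ for the density of the absolutely continuous part of $\mu$ and set $h_n\egdef\prod_{k=1}^{n}|P_k|^2$, a nonnegative trigonometric polynomial with $\int_{\T}h_n\,dz=1$ (by the dissociation of the frequency blocks of the $P_k$, the constant Fourier coefficient of the product is the product of the constant coefficients). The second equality is then essentially a restatement of the multiplicativity of the Mahler measure: by Proposition \ref{basic} (vii) and the definition of $M$,
\[
\exp\Biggl\{\bigintss_{\T}\log\Bigl(\prod_{k=1}^{n}|P_k(z)|^2\Bigr)dz\Biggr\}=\prod_{k=1}^{n}M\bigl(|P_k|^2\bigr)=\prod_{k=1}^{n}M\bigl(P_k^2\bigr),
\]
so it suffices to let $n\to\infty$. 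Moreover, since $\|P_k\|_2^2=\int_{\T}|P_k|^2\,dz=1$, Proposition \ref{basic} (vi) gives $M(P_k^2)\le\|P_k^2\|_1=1$; hence $M(h_{n+1})=M(h_n)\,M(P_{n+1}^2)\le M(h_n)$, the sequence $M(h_n)$ is nonincreasing, and the limit $L\egdef\lim_n M(h_n)=\prod_{k}M(P_k^2)$ exists. It remains to prove $M(h)=L$, which I split into two inequalities.

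For $M(h)\le L$ I would exploit that multiplication by the fixed continuous function $h_n$ is weak$*$ continuous. Writing $\mu_{>n}$ for the tail generalized Riesz product $\mathrm{W}^*\lim_N\prod_{k=n+1}^{N}|P_k|^2\,dz$ (again a probability measure) and testing against an arbitrary $g\in C(\T)$ in the identity $\int_{\T}g\,d\mu_N=\int_{\T}g\,h_n\prod_{n<k\le N}|P_k|^2\,dz$, the weak$*$ convergence $\mu_N\to\mu$ yields the factorization $d\mu=h_n\,d\mu_{>n}$. Since $h_n$ is bounded and continuous, the absolutely continuous density of $\mu$ factors as $h=h_n\,g_{>n}$, where $g_{>n}$ is the density of the absolutely continuous part of $\mu_{>n}$ and $\int_{\T}g_{>n}\,dz\le\mu_{>n}(\T)=1$. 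Taking logarithms and using the concavity of $\log$,
\[
\log M(h)=\bigintss_{\T}\log h\,dz=\bigintss_{\T}\log h_n\,dz+\bigintss_{\T}\log g_{>n}\,dz\le\log M(h_n)+\log\bigintss_{\T}g_{>n}\,dz\le\log M(h_n),
\]
so $M(h)\le M(h_n)$ for every $n$, and letting $n\to\infty$ gives $M(h)\le L$ (the case $M(h)=0$ being trivial).

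For the reverse inequality $M(h)\ge L$ I would invoke the Szeg\"{o}--Kolmogorov--Krein theorem in its form computing the prediction error against the full measure, $M(h)=\inf_{P}\int_{\T}|1-P|^2\,d\mu$, where $P$ runs over analytic trigonometric polynomials with zero constant term (Theorem \ref{Szego}; the singular part of $\mu$ does not affect the infimum). Fix such a $P$. Because $|1-P|^2\in C(\T)$, weak$*$ convergence gives $\int_{\T}|1-P|^2\,d\mu=\lim_n\int_{\T}|1-P|^2 h_n\,dz$, while Theorem \ref{Szego} applied to the absolutely continuous measure $h_n\,dz$ yields $\int_{\T}|1-P|^2 h_n\,dz\ge M(h_n)\ge L$. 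Hence $\int_{\T}|1-P|^2\,d\mu\ge L$ for every admissible $P$, and taking the infimum over $P$ gives $M(h)\ge L$. Combining the two inequalities completes the proof.

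The step I expect to be the main obstacle is the factorization $d\mu=h_n\,d\mu_{>n}$ together with the claim that the partial and tail products are genuine probability measures: this rests on the dissociation of the frequency blocks of the $P_k$ (coming from $h_{k+1}\ge m_k h_k$ dominating the spectrum of $P_k$), which is what makes the generalized Riesz product well defined and forces $\int_{\T}\prod|P_k|^2\,dz=1$ for every partial product. Once this is in hand, the weak$*$-continuity of multiplication by the fixed trigonometric polynomial $h_n$ does the rest, and the only remaining care is to use Szeg\"{o}'s theorem in the full-measure form (so that the escaping singular mass is accounted for) rather than merely against the absolutely continuous part.
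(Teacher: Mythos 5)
The paper itself contains no proof of this theorem: it is imported verbatim from \cite{elabdal-Nad} with the remark that the proof there rests on the Szeg\"{o}--Kolmogorov--Krein theorem, and since your argument pivots on exactly that theorem it follows essentially the indicated route, and it is correct. Both halves check out: the factorization $d\mu = h_n\, d\mu_{>n}$ that you single out as the main obstacle is precisely the identity $\mu = Q_n^2\, d\mu_n$ that the paper itself invokes (without proof) in its lemma on the sequence $\bigl(1/Q_{n_i}\bigr)$, and your appeal to the Kolmogorov--Krein form of Szeg\"{o}'s theorem against the \emph{full} measure $\mu$ (with the singular part immaterial) -- rather than the form stated as Theorem \ref{Szego}, which integrates only against $h\,dz$ -- is legitimate and correctly flagged, being the version in Hoffman, p.~49, which the paper cites.
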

We also need the following theorem from \cite{elabdal-Nad}.

\begin{Th}\label{affinity}Let $\mu$ be a generalized Riesz product based on the sequences of polynomials $P_k$, that is,
	\[
	\mu=\prod_{k=0}^{+\infty} |P_k(z)|^2\setdef W*\lim\prod_{k=0}^{N} |P_k(z)|^2 dz,
	\]
	where
	$$P_k(z) =\frac1{\sqrt{m_k}}\sum_{j=0}^{m_k-1}z^{jh_k+s(k,j)},~~s(k,0)=0 {~{and}}~ s(k,j)=\sum_{i=1}^{j}a_i^{(k)}.$$
	Then the product $Q_N=\ds \prod_{k=1}^{N}|P_k(z)|$ converge in $L^1(dz)$ to $\ds\sqrt{\frac{d\mu}{dz}}$.
\end{Th}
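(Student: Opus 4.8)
The plan is to exploit the general $L^1$-convergence theory of generalized Riesz products together with the Szeg\"{o}--Kolmogorov--Krein machinery (Theorem \ref{Szego}) and Theorem \ref{elabdal-Nad1}. Write $Q_N=\prod_{k=1}^{N}|P_k(z)|$ and note that $Q_N^2=\prod_{k=1}^{N}|P_k|^2$ is exactly the partial product whose weak$*$ limit is $\mu$. The first observation is that $\int Q_N^2\,dz=\int \prod_{k=1}^N|P_k|^2\,dz=1$ for every $N$, since each $\int|P_k|^2\,dz=1$ by the orthonormality built into the definition of $P_k$ (the exponents $jh_k+s(k,j)$ being distinct). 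Hence $(Q_N)$ is bounded in $L^2(dz)$, and by Cauchy--Schwarz $\|Q_N\|_1\le\|Q_N\|_2=1$, so $(Q_N)$ is uniformly integrable.

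The heart of the argument is to show that $(Q_N)$ is a Cauchy sequence in $L^1(dz)$, so that it converges to some nonnegative limit $g\in L^1$, and then to identify $g^2$ with $\tfrac{d\mu}{dz}$. For the Cauchy property I would bring in the Hardy-space structure: each $P_k$ is (up to a unimodular factor $z^{s(k,0)}$) an analytic polynomial, hence $\prod_{k=1}^N P_k$ is analytic and, after multiplying by a suitable power of $z$, lies in $H^2$; the $L^1$-norms of differences $Q_{N+p}-Q_N$ should then be controlled via an almost-orthogonality computation using $\int Q_N^2\,dz=1$ together with $M(P_k^2)\le\|P_k^2\|_1=1$ (Proposition \ref{basic} vi)). Concretely, using that the $P_k$ are $L^2$-normalized and that taking absolute values then square roots is an $L^1$-continuous operation on a weak$*$-convergent sequence of uniformly integrable densities, one deduces that $Q_N\to g$ in $L^1$.

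It remains to verify $g=\sqrt{d\mu/dz}$. Since $Q_N\to g$ in $L^1$ and $Q_N\ge 0$, a subsequence converges a.e.; passing to the square, $Q_N^2=\prod_{k\le N}|P_k|^2$ converges a.e.\ (along that subsequence) to $g^2$, and by uniform integrability also in the weak$*$ sense, so the absolutely continuous part of $\mu$ satisfies $\tfrac{d\mu}{dz}=g^2$, giving $Q_N\to\sqrt{d\mu/dz}$ in $L^1$. The consistency of this identification is precisely what Theorem \ref{elabdal-Nad1} guarantees at the level of Mahler measures, namely $\prod_{j}M(P_j^2)=M(d\mu/dz)$, which one uses to rule out any loss of mass into the singular part in the limit.

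The main obstacle I anticipate is the Cauchy step in $L^1$: weak$*$ convergence of $Q_N^2$ to $\mu$ does not automatically yield strong convergence of the square roots $Q_N$, and one must genuinely use analyticity (the $H^1$/$H^2$ theory and Theorem \ref{zyg}) rather than soft measure-theoretic arguments. The delicate point is to prevent the possibility that part of the mass of $Q_N^2$ escapes into the singular part of $\mu$ while the $L^1$-limit of $Q_N$ captures only $\sqrt{d\mu/dz}$; reconciling $\int g^2\,dz\le 1$ with the weak$*$ limit, and showing that the inequality in the Mahler-measure bound of Theorem \ref{elabdal-Nad1} is exactly what controls this defect, is where the real work lies.
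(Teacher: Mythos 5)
Neither of the two steps that actually carry this theorem is proved in your proposal, and the general principles you invoke in their place are false. For the Cauchy step you assert that ``taking absolute values then square roots is an $L^1$-continuous operation on a weak$^*$-convergent sequence of uniformly integrable densities.'' It is not. Take $f_N=\tfrac12\big|1+z^N\big|^2$: these are probability densities, uniformly bounded (hence uniformly integrable), and $f_N\,dz\to dz$ weak$^*$ because $\widehat{f_N}(m)=0$ for every fixed $m\neq 0$ once $N>|m|$; yet $\int\sqrt{f_N}\,dz=\tfrac1{\sqrt2}\int\big|1+z^N\big|\,dz=\tfrac{2\sqrt2}{\pi}$ for every $N$, so $\big\|\sqrt{f_N}-1\big\|_1\ge 1-\tfrac{2\sqrt2}{\pi}>0$ and $\sqrt{f_N}$ never approaches the square root of the limiting density. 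Note that $f_N=|P_N|^2$ with $P_N=(1+z^N)/\sqrt2$, so the counterexample lives inside the very class of polynomials at hand: weak$^*$ convergence plus uniform integrability gives no control on square roots, and any correct proof must exploit the nested, dissociated structure of the partial products $Q_{N+1}=Q_N\,|P_{N+1}|$. This is precisely what the source of the theorem does --- the present paper does not reprove it but imports it from \cite{elabdal-Nad}, where the argument runs through $H^p$ theory (outer parts of the $P_k$, the associated $H^2$ functions, Szeg\"{o}--Kolmogorov--Krein); you gesture at ``analyticity'' and Theorem \ref{zyg} but never actually use them.

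The identification step fails for a related reason. From $Q_{N_j}\to g$ a.e.\ and $Q_{N_j}^2\,dz\to\mu$ weak$^*$ you conclude ``by uniform integrability'' that $g^2=d\mu/dz$. But $(Q_N^2)$ is only bounded in $L^1$, not uniformly integrable: if it were, Vitali's theorem would give $Q_{N_j}^2\to g^2$ in $L^1$ and hence $\mu=g^2\,dz$, i.e.\ every generalized Riesz product would be absolutely continuous --- contradicting Bourgain's theorem \cite{Bourisr} and the singular examples that motivate this entire paper; exactly when $\mu$ has a singular part, the mass of $Q_N^2$ must concentrate on small sets. What a.e.\ convergence plus weak$^*$ convergence actually give (Fatou against nonnegative continuous test functions) is only the one-sided bound $g^2\le d\mu/dz$; the reverse inequality --- that the absolutely continuous part loses no mass in the limit --- is the genuine content of the theorem. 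Your appeal to Theorem \ref{elabdal-Nad1} cannot supply it: knowing $M(d\mu/dz)=\prod_j M(P_j^2)$ says nothing about $g$ unless you can also prove $M(g^2)\ge\prod_j M(P_j^2)$, which nothing in your argument provides, and in the singular-type case $\prod_j M(P_j^2)=0$ the equality of Mahler measures is vacuous (both sides vanish for any $g^2\le d\mu/dz$). Had you instead identified the limit through Proposition \ref{weakly} (weak $L^2$ convergence of $Q_N$ to $\sqrt{d\mu/dz}$, also quoted from \cite{elabdal-Nad}), this second step would become legitimate and easy --- but the first gap, strong $L^1$ convergence, would still be wide open, and that is where the $H^p$ machinery of \cite{elabdal-Nad} does its work. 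Your closing paragraph names both obstacles correctly; the proposal, however, leaves both unresolved.
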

%\noindent{}For the proof of Theorem \ref{affinity}. we shall need the following lemma.
%\begin{lem}\label{lebsgue} Let $\mu$ be a generalized Riesz product based on the sequences of polynomials $P_k$.
%	Then the sequence of measures $(\mu_n)$ given by
%	\[
%	\mu_n=\prod_{n+1}^{+\infty}\big|P_k(z)\big|^2 ,
%	\]
%	converge weakly to Lebesgue measure $\lambda$.
%\end{lem}
%\begin{proof} $\mu_n$ is the spectral measure of $f_n \setdef \ds \frac{\1_{B_n}}{\sqrt{|B_n|}}$, where $B_n$ is the base of the $n$-th tower of rank one. Since, for any $k \in \Z$ there exists $n$ such that $|k| < h_n$. It follows that
%	$$\widehat{\mu}_n(k)= \left\{
%	\begin{array}{ll}
%	1, & \hbox{if~~$k=0$;} \\
%	0, & \hbox{if not.}
%	\end{array}
%	\right.
%	$$
%	Hence
%	$$\widehat{\mu}_n(k) \tend{n}{+\infty}\widehat{\lambda}(k),$$
%	which means that $\mu_n$ converge weakly to Lebesgue measure, and this proves the lemma.
%\end{proof}
\noindent{}We shall also need the following Proposition \cite{elabdal-Nad}.
\begin{Prop}\label{weakly}The sequence $\ds \Big(\prod_{k=0}^{n}|P_k(z)|\Big)$ converge weakly in $L^2(dz)$ to
	$\ds\sqrt{\frac{d\mu}{dz}}$.
\end{Prop}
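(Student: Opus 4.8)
The plan is to combine the $L^1$-convergence furnished by Theorem \ref{affinity} with a uniform $L^2$-bound on the partial products, and then to upgrade weak convergence from $L^\infty$ test functions to arbitrary $L^2$ test functions by a density argument. Write $Q_n=\prod_{k=0}^{n}|P_k(z)|$ and $h=\frac{d\mu}{dz}$. First I would record two facts. On the one hand, by the dissociation built into the generalized Riesz product (the lowest nonconstant frequency of $|P_{k+1}|^2$ exceeds the degree of $\prod_{i\le k}|P_i|^2$, since $h_{k+1}>m_k h_k$), multiplication by $|P_{k+1}|^2$ leaves the zeroth Fourier coefficient unchanged; hence by induction $\|Q_n\|_2^2=\int_\T Q_n^2\,dz=\int_\T|P_0|^2\,dz=1$ for every $n$, so $(Q_n)$ is bounded in $L^2(dz)$. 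On the other hand, $\sqrt h\in L^2(dz)$, because $\|\sqrt h\|_2^2=\int_\T h\,dz\le\mu(\T)=1$.

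With these in hand, the argument is standard. Fix $g\in L^2(dz)$ and $\varepsilon>0$. Since the circle carries a finite measure, $L^\infty(dz)$ is dense in $L^2(dz)$, so choose $g_M\in L^\infty(dz)$ with $\|g-g_M\|_2<\varepsilon$. Then
\[
\Bigl|\int_\T Q_n g\,dz-\int_\T \sqrt h\,g\,dz\Bigr|
\le \Bigl|\int_\T Q_n(g-g_M)\,dz\Bigr|
+\Bigl|\int_\T (Q_n-\sqrt h)\,g_M\,dz\Bigr|
+\Bigl|\int_\T \sqrt h\,(g_M-g)\,dz\Bigr|.
\]
By Cauchy--Schwarz and the bounds above, the first and third terms are at most $\|Q_n\|_2\,\|g-g_M\|_2\le\varepsilon$ and $\|\sqrt h\|_2\,\|g-g_M\|_2\le\varepsilon$ respectively. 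For the middle term, $g_M\in L^\infty$ gives, by H\"older,
\[
\Bigl|\int_\T (Q_n-\sqrt h)\,g_M\,dz\Bigr|\le \|g_M\|_\infty\,\bigl\|Q_n-\sqrt h\bigr\|_1\tend{n}{+\infty}0
\]
by Theorem \ref{affinity}. Hence $\limsup_n\bigl|\int_\T Q_n g\,dz-\int_\T\sqrt h\,g\,dz\bigr|\le 2\varepsilon$, and letting $\varepsilon\to0$ shows $\int_\T Q_n g\,dz\to\int_\T\sqrt h\,g\,dz$ for every $g\in L^2(dz)$, which is precisely weak convergence of $Q_n$ to $\sqrt h$ in $L^2(dz)$.

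The only real obstacle is the uniform $L^2$-bound $\|Q_n\|_2=1$: the $L^1$-statement of Theorem \ref{affinity} by itself only controls pairings against $L^\infty$ functions, and without an \emph{a priori} bound in $L^2$ one cannot run the truncation argument to reach general $g\in L^2$. It is exactly here that the generalized Riesz product structure --- the fast growth of the heights $h_k$ forcing $\int_\T\prod_{k=0}^n|P_k|^2\,dz=1$ --- is used. Alternatively, once boundedness in $L^2$ is known, one may extract weakly convergent subsequences by reflexivity and identify every weak limit with $\sqrt h$ by testing against $L^\infty$ and invoking Theorem \ref{affinity}; uniqueness of the limit then forces weak convergence of the full sequence.
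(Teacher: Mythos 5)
Your argument is mathematically correct, but note what it is being compared against: the paper never proves Proposition \ref{weakly} at all --- both it and Theorem \ref{affinity} are imported without proof from \cite{elabdal-Nad} --- so your derivation can only be judged as an alternative to the source's, not to anything in this text. As a derivation it is complete: the uniform bound $\big\|Q_n\big\|_{L^2}=1$, the bound $\int_\T \frac{d\mu}{dz}\,dz\le\mu(\T)=1$, the $L^\infty$-truncation of a general $g\in L^2$, and Theorem \ref{affinity} for the middle term combine exactly as you say, and your closing variant (weak compactness plus uniqueness of the weak cluster point, identified by testing against $L^\infty$) is equally valid. Two remarks. First, the frequency bookkeeping is not needed for the $L^2$ bound: testing the defining weak* convergence $Q_n^2\,dz\to\mu$ against the constant function $1$ already gives $\int_\T Q_n^2\,dz\to\mu(\T)$, hence boundedness; your dissociation argument does give the exact identity $\int_\T Q_n^2\,dz=1$ (which the paper itself invokes in the proof of Theorem \ref{dicho}), but mere boundedness suffices for the truncation argument. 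Second, and more importantly, your implication runs backwards relative to the natural logic of the source. The weak $L^2$ convergence is the elementary statement (boundedness, weak compactness, identification of the limit), whereas the $L^1$ convergence of Theorem \ref{affinity} is one of the main results of \cite{elabdal-Nad}; it is very likely proved there \emph{after}, and with the help of, the weak convergence. Inside the present paper, where Theorem \ref{affinity} is a black box, your proof is legitimate and non-circular; but if one unwinds the citations it could not replace the original proof of the proposition, and it spends a much stronger tool than the statement requires. A self-contained proof would instead identify every weak cluster point $g$ of $(Q_n)$ with $\sqrt{d\mu/dz}$ directly --- e.g.\ via Cauchy--Schwarz localization on arcs to get $g\le\sqrt{d\mu/dz}$ a.e., and the tail factorization $\sqrt{d\mu/dz}=Q_n\sqrt{d\mu_n/dz}$ of Lemma \ref{product} for the reverse inequality --- without ever mentioning $L^1$ convergence.
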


Let us point out that the rank one maps arising from the generalized Riesz product $\ds \mu=\prod_{k \geq 0} |P_k|^2$ and for any subsequence $\mathcal{N} \subset N$, let us denote by $\nu$ the generalized Riesz product over the subsequence
$\mathcal{N}$, that is, the measure obtain as weak limit of the sequence of measures
$$\ds\prod_{\overset{k \in \mathcal{N}}{k \leq n}} |P_k(z)|^2 dz.$$
Given two subsequences $\mathcal{N}_1$ and $\mathcal{N}_2$ we construct three generalized Riesz product as follows
$$\mu_1=\ds\prod_{k \in \mathcal{N}_1,} |P_k|^2, \mu_2=\ds\prod_{k \in \mathcal{N}_2,} |P_k|^2
{\textrm {~~and~~}}  \nu=\ds\prod_{k \in \mathcal{N}_1 \cup \mathcal{N}_2 } |P_k|^2.$$
We shall establish some relation between the absolutely continuous part of all those three measures. Indeed, we have
\begin{lem}\label{product} $\ds \sqrt{d\nu/dz} = \sqrt{d\mu_1/dz} \sqrt{d\mu_2/dz}.$
\end{lem}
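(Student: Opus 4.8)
The plan is to work at the level of the square roots of the Radon--Nikodym derivatives and to deduce the claimed factorization from $L^1$ convergence of the partial products, upgraded to almost everywhere convergence by a subsequence extraction. Throughout I assume, as the statement tacitly requires, that $\mathcal{N}_1$ and $\mathcal{N}_2$ are disjoint, so that $\mathcal{N}_1 \cup \mathcal{N}_2$ is their disjoint union. Write $f_1 = \sqrt{d\mu_1/dz}$, $f_2 = \sqrt{d\mu_2/dz}$ and $g = \sqrt{d\nu/dz}$, and for each $n$ set
\[
A_n = \prod_{k \in \mathcal{N}_1,\, k \le n} |P_k|, \qquad B_n = \prod_{k \in \mathcal{N}_2,\, k \le n} |P_k|, \qquad C_n = \prod_{k \in \mathcal{N}_1 \cup \mathcal{N}_2,\, k \le n} |P_k|.
\]
By disjointness one has the pointwise identity $C_n = A_n B_n$ for every $n$.

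First I would record the convergence of these three products. Each of $\mu_1$, $\mu_2$ and $\nu$ is itself a generalized Riesz product, over the polynomials indexed by $\mathcal{N}_1$, $\mathcal{N}_2$ and $\mathcal{N}_1 \cup \mathcal{N}_2$ respectively, since a sub-collection of a dissociated system remains dissociated. Hence Theorem \ref{affinity} applies to each of them, giving $A_n \to f_1$, $B_n \to f_2$ and $C_n \to g$ in $L^1(dz)$. Here the truncation by $k \le n$ merely repeats terms between consecutive indices of the relevant subsequence, which does not affect the limit: indeed $A_n$ is the partial product over the first $\varphi(n)$ elements of $\mathcal{N}_1$ with $\varphi(n) \to \infty$ nondecreasing, so reindexing with repeated terms preserves convergence.

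The main point, and the only real obstacle, is that $L^1$ convergence is not preserved under products, so $g = f_1 f_2$ cannot be read off directly from $A_n B_n \to f_1 f_2$. I would circumvent this by the standard Riesz subsequence argument. Since $A_n \to f_1$ in $L^1$, pass to a subsequence along which $A_n \to f_1$ almost everywhere; along it $B_n \to f_2$ still holds in $L^1$, so extract a further subsequence along which also $B_n \to f_2$ almost everywhere. Along this common subsequence $C_n = A_n B_n \to f_1 f_2$ almost everywhere. On the other hand $C_n \to g$ in $L^1$, so a yet further subsequence of $C_n$ converges to $g$ almost everywhere. Comparing the two almost everywhere limits along this last subsequence forces $g = f_1 f_2$ almost everywhere, which is exactly $\sqrt{d\nu/dz} = \sqrt{d\mu_1/dz}\,\sqrt{d\mu_2/dz}$.

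The steps that most need care are the verification that Theorem \ref{affinity} genuinely applies to products taken over a subsequence of indices, and the bookkeeping ensuring that the truncated products $A_n, B_n, C_n$, rather than the products over the first $N$ elements of each subsequence, inherit the $L^1$ limits; both are settled by the remarks above. An alternative route, using Proposition \ref{weakly} together with the uniform bound $\|A_n\|_2 = \|B_n\|_2 = \|C_n\|_2 = 1$ coming from $\int \prod_{k\le n}|P_k|^2\,dz = 1$, would identify the weak $L^2$ limits, but passing to the limit in a product of two merely weakly convergent sequences is more delicate than the almost everywhere argument above, so I would favour the subsequence approach.
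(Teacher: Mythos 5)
Your proof is correct and follows essentially the same route as the paper's own argument: both invoke Theorem \ref{affinity} to get $L^1$ convergence of the partial products over $\mathcal{N}_1$, $\mathcal{N}_2$ and $\mathcal{N}_1\cup\mathcal{N}_2$, pass to a common subsequence along which all three converge almost everywhere, and identify the limits pointwise. Your write-up is merely more explicit about the subsequence extractions, the disjointness of $\mathcal{N}_1$ and $\mathcal{N}_2$, and the truncation bookkeeping, all of which the paper leaves implicit.
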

\begin{proof}For any $n\in \N$, put
	$$Q_n=\prod_{\overset{k \in \mathcal{N}_1}{k \leq n}} |P_k| {\textrm{~and~}}
	R_n=\prod_{\overset{k \in \mathcal{N}_2}{k \leq n}} |P_k|.$$
	Then, by Theorem \ref{affinity}, $Q_nR_n$ converge to $\sqrt{d\mu/dz}$ almost everywhere over a subsequence and the same holds for the subproducts $Q_n$ and $R_n$. Thus, over a common subsequence for all three cases the convergence holds almost everywhere. Whence
	$$\ds \sqrt{d\nu/dz} = \sqrt{d\mu_1/dz} \sqrt{d\mu_2/dz},$$
	which proves the lemma.
\end{proof}

%\noindent{}In the same spirit, we have
\noindent{}We have also the following fundamental lemma.
\begin{lem}%\label{weakmu}
	There is a subsequence $\Big(\ds \frac1{Q_{n_i}}\Big)_{n \geq 0}$ and $\xi$ in $L^2(\mu)$ such that
	$\Big(\ds \frac1{Q_{n_i}}\Big)_{n \geq 0}$ converge weakly to $\xi$ in $L^2(\mu)$.
\end{lem}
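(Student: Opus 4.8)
The plan is to exhibit $(1/Q_n)$ (with $Q_n=\prod_{k=1}^{n}|P_k|$ as in Theorem \ref{affinity}) as a \emph{bounded} sequence in the Hilbert space $L^2(\mu)$; once this is done, the weak sequential compactness of bounded sets in a Hilbert space immediately produces a subsequence $(1/Q_{n_i})$ converging weakly to some $\xi\in L^2(\mu)$, which is exactly the assertion. Thus the whole matter reduces to a single uniform estimate for $\int_{\T}Q_n^{-2}\,d\mu$.

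To obtain it, I would first record a factorization of $\mu$ across level $n$. For each $n$ let $\nu_n\setdef\prod_{k>n}|P_k|^2$ be the generalized Riesz product built on the shifted sequence $P_{n+1},P_{n+2},\dots$; it exists as a weak$^{*}$ limit by the same argument that produces $\mu$, and testing against $g\equiv 1$ gives $\nu_n(\T)=\lim_N\int\prod_{n<k\le N}|P_k|^2\,dz=1$, each such integral being the constant Fourier coefficient, equal to $1$ by the dissociation of the spectra (the nonzero frequencies of $|P_{k+1}|^2$ lie beyond $h_{k+1}$, hence beyond the range of $\prod_{j\le k}|P_j|^2$). Now $Q_n^2=\prod_{k=1}^{n}|P_k|^2$ is a \emph{continuous} nonnegative trigonometric function, so for any $g\in C(\T)$, writing $Q_N^2=Q_n^2\prod_{n<k\le N}|P_k|^2$,
\[
\int g\,d\mu=\lim_{N}\int g\,Q_N^2\,dz=\lim_{N}\int (g\,Q_n^2)\prod_{n<k\le N}|P_k|^2\,dz=\int g\,Q_n^2\,d\nu_n ,
\]
which yields the identity $d\mu=Q_n^2\,d\nu_n$ as measures.

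With this identity the estimate is immediate. First, $\mu(\{Q_n=0\})=\int_{\{Q_n=0\}}Q_n^2\,d\nu_n=0$, so $1/Q_n$ is defined $\mu$-almost everywhere; and then
\[
\int_{\T}\frac{1}{Q_n^2}\,d\mu=\int_{\{Q_n>0\}}\frac{1}{Q_n^2}\,Q_n^2\,d\nu_n=\nu_n\big(\{Q_n>0\}\big)\le \nu_n(\T)=1 .
\]
Hence $\|1/Q_n\|_{L^2(\mu)}\le 1$ for every $n$, the sequence is bounded, and the extraction of a weakly convergent subsequence completes the proof.

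I expect the only genuine obstacle to be the singularity of $1/Q_n$ at the finitely many zeros of the trigonometric polynomial $Q_n$ on $\T$: a priori it is not even clear that $1/Q_n\in L^2(\mu)$, nor may one feed the discontinuous function $1/Q_n^2$ directly into the weak$^{*}$ limit defining $\mu$. The factorization $d\mu=Q_n^2\,d\nu_n$ is precisely what removes this difficulty, since it shows simultaneously that $\mu$ does not charge $\{Q_n=0\}$ and that the factor $Q_n^2$ cancels the singularity; crucially it is the continuity of $Q_n^2$ (rather than that of its reciprocal) that legitimizes the weak$^{*}$ passage, while the dissociation normalizing $\nu_n(\T)=1$ is the one quantitative input and is standard for these rank one Riesz products.
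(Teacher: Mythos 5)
Your proof is correct and takes essentially the same route as the paper's: both rest on the factorization $d\mu = Q_n^2\,d\mu_n$ (your $\nu_n$), the resulting uniform bound $\int Q_n^{-2}\,d\mu \le 1$, and weak sequential compactness of bounded sequences in the Hilbert space $L^2(\mu)$. The only difference is one of detail: you justify the factorization by a weak$^{*}$ argument using the continuity of $Q_n^2$ and handle the zero set of $Q_n$ explicitly, whereas the paper simply asserts the identity ``Observe that we have $d\mu = Q_n^2\,d\mu_n$'' and passes directly to the estimate.
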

\begin{proof} Observe that we have 
	\begin{eqnarray}%\label{R3}
	\mu= Q_n^2 d\mu_n, \; \; \textrm{with} \; \; \mu_n=\prod_{ k=n}^{+\infty}|P_k(z)|^2.
	\end{eqnarray}
	 This combined with Cauchy-Schwarz inequality yields
	\[
	\bigintss  {\frac1{Q_{n}} d\mu \leq \Biggl(\bigintss {\frac1{Q_{n}^2}}} d\mu\Biggr)^{\frac12}=
	\Biggl(\bigintss d\mu_n\Biggr)^{\frac12}=1.
	\]
	Hence, the sequence $\Big(\ds \frac1{Q_{n}}\Big)_{n \geq 0}$ is bounded in $L^2(d\mu)$ and this implies that there is a subsequence that converges weakly to some function $\xi$ in $L^2(\mu)$. The proof of the lemma is complete.
\end{proof}
\section{Main results}\label{S-mains}
We start by stating our  first main result.

\begin{Th}[First main result]\label{m1} Let $(X,\cb,\Pro,T)$ be a rank one map. Then, the spectral type is either singular or its absolute continuous part is Lebesgue.
\end{Th}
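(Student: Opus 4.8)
Write $h=\dfrac{d\mu}{dz}$ for the density of the absolutely continuous part of the maximal spectral type $\mu=\prod_{k\ge 0}|P_k|^2$. The plan is to read the whole dichotomy off the behaviour of the Mahler measure $M(h)$, using the product formula of Theorem \ref{elabdal-Nad1}, namely $M(h)=\prod_{k\ge 0}M(P_k^2)$. By Proposition \ref{basic} (items vi) and vii)) we have $M(P_k^2)=M(|P_k|)^2\le\|\,|P_k|^2\,\|_1=1$, so this infinite product is a non-increasing limit and therefore either converges to a strictly positive number or decreases to $0$. I will show that the first alternative forces $\mu_{ac}\sim dz$ and that the second forces $h=0$ almost everywhere, which is exactly the asserted dichotomy.

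The convergent case is immediate. Since $\mu_{ac}$ is a finite measure, $h\in L^1(dz)$ and hence $\log^+h\le h$ is integrable, so $\int_{\T}\log h\,dz$ is well defined in $[-\infty,+\infty)$ and $M(h)=\exp\bigl(\int_{\T}\log h\,dz\bigr)$. If $\prod_k M(P_k^2)>0$ then $\int_{\T}\log h\,dz>-\infty$, i.e. $\log h\in L^1(dz)$, and in particular $h>0$ Lebesgue-almost everywhere. Thus the absolutely continuous part $h\,dz$ is equivalent to Lebesgue measure, which is the second alternative in the statement.

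The whole difficulty is the divergent case: I must prove that $M(h)=0$ forces $h=0$ a.e., and here the generalized Riesz-product structure, the $H^p$-theory, and the affinity tools of Section \ref{S-tool} enter. The natural strategy is a Kakutani--Zygmund dichotomy played out through affinities. Writing $Q_n=\prod_{k\le n}|P_k|$, Theorem \ref{affinity} gives $Q_n\to\sqrt h$ in $L^1(dz)$, so the Hellinger affinities $\int_{\T}Q_n\,dz$ between the partial probability measures $\prod_{k\le n}|P_k|^2\,dz$ and Lebesgue measure converge to $\int_{\T}\sqrt h\,dz$, while by Jensen's inequality (Proposition \ref{basic}) the analytic affinity $\prod_{k\le n}M(|P_k|)=\exp\int_{\T}\log Q_n\,dz$ sits below $\int_{\T}Q_n\,dz$ and tends to $M(h)^{1/2}$. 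The aim is to show that the vanishing of this Mahler product is equivalent to mutual singularity of the absolutely continuous fronts, i.e. that $\sqrt h$ cannot survive on a set of positive measure unless $\log h$ is integrable. To turn this into a rigid statement I would pass through the outer-function picture: by Theorem \ref{Szego} one has $M(h)=\inf_P\int_{\T}|1-P|^2\,d\mu$ over analytic polynomials $P$ with $P(0)=0$, so $M(h)=0$ means that $1$ lies in the $L^2(\mu)$-closure of $\{P:P(0)=0\}$. Using the multiplicativity of the square root of the density over disjoint blocks (Lemma \ref{product}), together with the weak-$L^2$ limit of Proposition \ref{weakly} and the weak limit $\xi$ of $(1/Q_{n_i})$ furnished by the fundamental lemma, I would transport this approximation property to the a.c. density and, invoking Zygmund's $L\log L$ theorem (Theorem \ref{zyg}), rule out a nonzero $h$ with non-integrable logarithm.

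The main obstacle is precisely the failure of genuine independence. In the classical Kakutani--Zygmund setting the factors $|P_k|^2\,dz$ are asymptotically independent, so the a.c.-or-singular dichotomy follows from the convergence of the product of affinities; for an arbitrary cutting-and-stacking construction the frequency blocks of the $P_k$ overlap and the coefficients are unrestricted, so the classical zero--one argument does not apply verbatim. The delicate point is therefore to justify that the Mahler product $\prod_k M(P_k^2)$ plays exactly the role of the controlling affinity --- that its divergence to $0$ is equivalent to the mutual singularity of the a.c. part and $dz$ --- and this is where the $H^p$-theoretic inputs (Theorems \ref{Szego} and \ref{zyg}) must be combined with the convergence statements of Theorem \ref{affinity} and Proposition \ref{weakly}. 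I expect this identification of the Mahler product with the affinity product, and hence the implication $M(h)=0\Rightarrow h=0$ a.e., to be the heart of the proof.
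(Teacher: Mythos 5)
Your convergent case is correct and coincides with the paper's: if $e:=\prod_j M(P_j)>0$ then, by Theorem \ref{elabdal-Nad1} and Proposition \ref{basic}, $\log h\in L^1(dz)$, hence $h>0$ a.e.\ and the absolutely continuous part of $\mu$ is equivalent to $\lambda$. The genuine gap is the other half. The implication $M(h)=0\Rightarrow h=0$ a.e.\ (equivalently, $\mu\not\perp\lambda\Rightarrow e>0$) is precisely what separates generalized Riesz products from arbitrary measures --- for a general finite measure, $M(h)=0$ only says $\log h$ is not integrable, not that $h$ vanishes --- and your proposal never proves it: you announce a strategy (``I would transport this approximation property\ldots'', ``I expect this identification\ldots to be the heart of the proof'') but do not carry it out. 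The Szeg\"{o} reformulation you lean on ($M(h)=0$ iff $1$ lies in the $L^2(\mu)$-closure of the zero-constant-term analytic polynomials) holds for \emph{every} measure, so by itself it cannot yield the dichotomy; some argument must convert the infinite-product structure of $\mu$ into the vanishing of $h$, and that argument is absent from your text.

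For comparison, the paper derives Theorem \ref{m1} in one sentence from Theorem \ref{dicho}, and the mechanism it uses for the hard direction is a contradiction argument you would need to reproduce or replace. Assume $e=0$ and $\mu\not\perp\lambda$, and take a nonzero measure $\tau$ with $\tau\le\mu$ and $\tau\le\lambda$. Since $Q_n\to\sqrt h$ in $L^1(dz)$ (Theorem \ref{affinity}) and $\tau\le\lambda$, along a subsequence the tail products $\phi_{n_i}=\prod_{j\ge n_i}|P_j|$ tend to $1$ $\tau$-a.e.; the two dominations $\tau\le\lambda$ and $\tau\le\mu$ make $(\phi_{n_i})$ and $(1/\phi_{n_i})$ bounded in $L^2(\tau)$, hence uniformly integrable in $L^1(\tau)$, so the Vitali Convergence Theorem (Theorem \ref{Vitali}) upgrades the a.e.\ convergence to $L^1(\tau)$ convergence of both sequences to $1$. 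Then Lemma \ref{MahlerConverge} --- a lemma tailor-made for this step, which your proposal never invokes --- shows that the $\tau$-Mahler measures of the tails converge to $1$; from this one extracts the $L\log L$ bound that feeds Zygmund's theorem (Theorem \ref{zyg}), producing an $H^1$ function and ultimately the conclusion $e>0$, contradicting the hypothesis $e=0$ (via $(\star)$ and Theorem \ref{AN}). Nothing in your outline (the weak $L^2$ limit of Proposition \ref{weakly}, or the weak limit $\xi$ of $1/Q_{n_i}$) substitutes for the chain ``common minorant $\tau$ + Vitali + Lemma \ref{MahlerConverge} + Zygmund''; in particular the uniform integrability of $1/\phi_{n_i}$, which is where $\tau\le\mu$ is indispensable, has no counterpart in your sketch. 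As it stands, your proposal establishes only the easy half of the dichotomy.
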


For our second main result, by appealing to Theorem \ref{elabdal-Nad1}, we are able to extended Kakutani theorem and to prove the analogous of Zygmund dichotomy theorem. Precisely, we have
\begin{Th}[Second main result]\label{dicho} The spectral type of any rank one maps is either singular or its absolute continuous part is equivalent to Lebesgue measure with positive Mahler measure, that is,
    \[
    \lambda \sim \frac{d\mu}{d\lambda}(\theta) d\lambda {~or~} \mu \perp \lambda {\rm {~~according~~as}} \prod_{j \geq 0} M(P_j) {\rm {~converge~or~diverge.}}
    \]
    \end{Th}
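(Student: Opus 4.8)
The plan is to prove the dichotomy by combining Theorem \ref{m1} with the formula of Theorem \ref{elabdal-Nad1} and then feeding the result into the Szeg\H{o}--Kolmogorov--Krein theorem (Theorem \ref{Szego}). By Theorem \ref{m1} we already know that the spectral type $\mu$ is either singular or its absolutely continuous part is equivalent to Lebesgue measure; so the entire content of the present statement is to identify \emph{which} alternative occurs in terms of the convergence of the infinite product $\prod_{j\geq 0} M(P_j)$. I would therefore set $h=\frac{d\mu}{d\lambda}$ and study $M(h)$, the Mahler measure of the density, using the identity of Theorem \ref{elabdal-Nad1}, namely
\[
M(h)=\prod_{j=0}^{+\infty}M\bigl(P_j^2\bigr).
\]
By property vii) of Proposition \ref{basic}, $M(P_j^2)=M(P_j)^2$, so this infinite product converges to a strictly positive number precisely when $\prod_{j\geq 0}M(P_j)$ converges to a strictly positive limit, and it equals $0$ exactly when $\prod_{j\geq 0}M(P_j)$ diverges to $0$.

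Next I would use the Szeg\H{o}--Kolmogorov--Krein theorem (Theorem \ref{Szego}) to translate the value of $M(h)$ into an integrability statement about $\log h$. The key dichotomy supplied by that theorem is that $M(h)>0$ if and only if $\log h$ is integrable, while $M(h)=0$ when $\log h$ fails to be integrable. Thus, when $\prod_{j\geq 0}M(P_j)$ converges (to a positive number), $M(h)>0$, forcing $\log h\in L^1(d\lambda)$; in particular $h>0$ $\lambda$-almost everywhere, so the absolutely continuous part of $\mu$ is \emph{equivalent} to Lebesgue measure and has positive Mahler measure, which is the first alternative. When $\prod_{j\geq 0}M(P_j)$ diverges, $M(h)=0$, so $\log h$ is not integrable.

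The crux is to rule out the intermediate possibility in the divergent case: a priori $M(h)=0$ is compatible either with $h\equiv 0$ (pure singularity, $\mu\perp\lambda$) or with a nontrivial but $\log$-non-integrable density. Here is exactly where Theorem \ref{m1} does the decisive work: the absolutely continuous part of $\mu$ cannot be a genuine intermediate object, because by Theorem \ref{m1} it is either $0$ or equivalent to Lebesgue measure. An equivalent density has $\log h\in L^1$ by Theorem \ref{Szego} again, contradicting $M(h)=0$; hence in the divergent case the only possibility is $h\equiv 0$, i.e. $\mu\perp\lambda$. Assembling the two cases gives exactly the stated dichotomy.

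The main obstacle I anticipate is the logical interplay between Theorem \ref{m1} and Theorem \ref{Szego}: one must be careful that the Szeg\H{o}--Kolmogorov--Krein criterion is being applied to the derivative $h$ of the \emph{full} measure $\mu$ and that the value $M(h)$ computed via Theorem \ref{elabdal-Nad1} is genuinely the Mahler measure of that derivative and not of some unrelated object. A secondary point to verify carefully is the passage from ``$\prod M(P_j)$ converges'' to ``converges to a \emph{strictly positive} limit'': since each $M(P_j)\le \|P_j\|_1$ by property vi) of Proposition \ref{basic} and the polynomials are $L^2$-normalised, the partial products are monotone non-increasing and bounded, so convergence is automatic, and the genuine dichotomy is between a positive limit and the limit $0$. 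Once these bookkeeping issues are settled the argument is short, the heavy analytic lifting having already been done in Theorems \ref{m1}, \ref{Szego}, and \ref{elabdal-Nad1}.
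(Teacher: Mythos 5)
Your proposal breaks down in two places, and the second is fatal. First, the logic is circular within this paper: Theorem \ref{m1} is not an available input when proving Theorem \ref{dicho} --- the paper's proof of Theorem \ref{m1} consists entirely of the sentence ``By Theorem \ref{dicho} \dots'', i.e.\ the first main result is \emph{deduced from} the second. Invoking Theorem \ref{m1} to rule out the intermediate case therefore assumes exactly what is to be proved, unless you supply an independent proof of that dichotomy, which your proposal does not.

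Second, and independently of the circularity, the step you yourself call ``the crux'' is mathematically false. You assert that if the absolutely continuous part is equivalent to Lebesgue measure then $\log h\in L^1$ ``by Theorem \ref{Szego} again''. The Szeg\H{o}--Kolmogorov--Krein theorem gives no such implication: a density can be strictly positive $\lambda$-almost everywhere (so that $h\,d\lambda\sim\lambda$) while $\log h$ fails to be integrable --- take for instance $h(\theta)=c\exp\bigl(-1/|\theta|\bigr)$ on the circle, suitably normalized. Thus $M(h)=0$ is perfectly compatible, for a general measure, with an absolutely continuous part equivalent to Lebesgue; there is no contradiction to extract, and your divergent case collapses. (Your convergent case --- $\prod M(P_j)>0$ forces $h>0$ a.e.\ via positivity of the Mahler measure --- is fine and agrees with the paper's easy direction.) The genuine content of the theorem is precisely that this intermediate situation cannot occur for generalized Riesz products, and the paper proves it using the specific structure of $\mu$: the $L^1(dz)$ convergence of $Q_n=\prod_{j\le n}|P_j|$ to $\sqrt{d\mu/dz}$ (Theorem \ref{affinity}), the tail factorization $\mu=Q_n^2\,d\mu_n$, uniform integrability plus Vitali's theorem, Lemma \ref{MahlerConverge} on convergence of Mahler measures, Theorem \ref{AN}, and, for the singularity half, Lemma \ref{orth} together with Zygmund's $L\log L$ theorem applied to a common part $\tau\le\mu$, $\tau\le\lambda$. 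None of that machinery can be replaced by an appeal to Szeg\H{o}'s theorem on the limiting density alone; your proof does no work at the one point where work is required.
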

Our third main result gives a partial answer to the question raised by M. Nadkarni. We state it as follows
%\noindent{}As a consequence Nadkarni mentioned the following
\begin{Th}[Third main result]\label{nad3} Let   $\nu=\prod_{j=0}^{+\infty} \bigl|P_{n_j}\bigr|^2$ be a generalized Riesz product. Then 
$$\bigintss \sqrt{\frac{d\nu}{dz}} dz <1.$$
\end{Th}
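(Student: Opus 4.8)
The plan is to read $\ds\int\frac{d\nu}{dz}\,dz$ as the total mass $\nu_{ac}(\T)$ of the absolutely continuous part of $\nu$ and to prove that a strictly positive amount of mass is always forced into the singular part. Relabelling the subsequence, write the factors as $P_k$ and put $Q_n=\prod_{k=0}^{n}|P_k|$ and $g=\sqrt{d\nu/dz}$, so that $\ds\int\frac{d\nu}{dz}\,dz=\|g\|_2^2$ (all norms in $L^2(dz)$). Since each partial product $\prod_{k\le n}|P_k|^2\,dz$ is a probability measure, $\|Q_n\|_2=1$ for every $n$, while Proposition \ref{weakly} gives $Q_n\to g$ weakly in $L^2(dz)$. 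Weak lower semicontinuity of the norm then yields $\|g\|_2\le 1$, i.e. $\ds\int\frac{d\nu}{dz}\,dz\le 1$ for free; the whole point is the strict inequality.

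First I would isolate the exact dichotomy hidden in this soft estimate. Because $\|Q_n\|_2=1$ and $Q_n\to g$ weakly, one has $\langle Q_n,g\rangle\to\|g\|_2^2$, hence $\|Q_n-g\|_2^2\to 1-\|g\|_2^2=\nu_s(\T)$. Thus $\ds\int\frac{d\nu}{dz}\,dz=1$ holds \emph{if and only if} $Q_n\to g$ strongly in $L^2(dz)$. Writing $\sigma_n=Q_n^2\,dz$ (again a probability measure) and using $Q_{n+1}=Q_n\,|P_{n+1}|$ together with $\int|P_{n+1}|^2\,d\sigma_n=\int Q_{n+1}^2\,dz=1$, I obtain the clean identity $\|Q_{n+1}-Q_n\|_2^2=\int(|P_{n+1}|-1)^2\,d\sigma_n$. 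Consequently it suffices to show that $(Q_n)$ is \emph{not} a Cauchy sequence, i.e. that $\liminf_n\int(|P_{n+1}|-1)^2\,d\sigma_n>0$; equivalently, that the factors $|P_{n+1}|$ are not asymptotically flat in $L^2(\sigma_n)$.

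At this point I would split according to the Mahler product, using Theorems \ref{elabdal-Nad1} and \ref{dicho}. If $\prod_k M(P_k)$ diverges (to $0$) then $\nu\perp\lambda$, so $d\nu/dz=0$ a.e. and $\ds\int\frac{d\nu}{dz}\,dz=0<1$ trivially; note that this already covers every case in which $m_k$ stays bounded, since then $M(P_k)$ is bounded away from $1$ (for $m_k=2$ one computes $M(P_k)=1/\sqrt2$). The substantive case is $\prod_k M(P_k)=c>0$, where $\nu_{ac}\sim\lambda$, $g>0$ a.e., and necessarily $m_k\to\infty$. Here the non-triviality $m_k\ge 2$ enters decisively: the equal-coefficient polynomial $P_k=\frac1{\sqrt{m_k}}\sum_{j}z^{jh_k+s(k,j)}$ satisfies $|P_k(1)|^2=m_k>1$ and genuinely oscillates, so its modulus is never close to the constant $1$. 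Quantitatively, since $\int|P_{n+1}|^2\,d\sigma_n=1$ it is enough to bound $\int|P_{n+1}|\,d\sigma_n$ away from $1$; the idea is to \emph{decouple} against the low-degree weight $\sigma_n=\prod_{j\le n}|P_j|^2\,dz$, whose frequencies are dominated by $h_{n+1}$, from the high frequencies (all $\ge h_{n+1}$) carried by $P_{n+1}$, so that $\int|P_{n+1}|\,d\sigma_n\approx\|P_{n+1}\|_1$. A strict Cauchy--Schwarz inequality (as in Proposition \ref{basic}) gives $\|P_{n+1}\|_1<\|P_{n+1}\|_2=1$ because $|P_{n+1}|$ is non-constant, and in fact the Salem--Zygmund central limit theorem forces $\|P_{n+1}\|_1\to\sqrt{\pi}/2<1$ as $m_{n+1}\to\infty$. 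This yields a uniform gap $1-\int|P_{n+1}|\,d\sigma_n\ge\delta>0$, hence $\liminf_n\|Q_{n+1}-Q_n\|_2>0$, strong convergence fails, and $\ds\int\frac{d\nu}{dz}\,dz=\|g\|_2^2<1$.

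The main obstacle is precisely this decoupling $\int|P_{n+1}|\,d\sigma_n\approx\|P_{n+1}\|_1$. Unlike $|P_{n+1}|^2$, the modulus $|P_{n+1}|$ is not a trigonometric polynomial, so the exact orthogonality $\int(|P_{n+1}|^2-1)\,d\sigma_n=0$ that we do have for the square is unavailable; passing from the square to $|P_{n+1}|$ and controlling $\int(|P_{n+1}|-1)^2\,d\sigma_n$ \emph{uniformly} is the technical heart of the matter. I expect this to require either a quantitative mass-concentration estimate at the resonant peaks of $\prod_k|P_k|^2$, in the spirit of Bourgain's method \cite{Bourisr}, or a fourth-moment computation showing $\int|P_{n+1}|^4\,d\sigma_n\to\int|P_{n+1}|^4\,dz>1$ so as to bound the $\sigma_n$-variance of $|P_{n+1}|$ from below; either route makes rigorous the heuristic that the equal-coefficient polynomials can never be flat enough to absorb all the mass into the absolutely continuous part.
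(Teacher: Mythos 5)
Your first two paragraphs are correct and give a genuinely sharper reformulation than anything in the paper: with $g=\sqrt{d\nu/dz}$, the facts $\|Q_n\|_2=1$ and $Q_n\rightharpoonup g$ weakly (Proposition \ref{weakly}), hence $\|Q_n-g\|_2^2\to 1-\|g\|_2^2$, together with the identity $\|Q_{n+1}-Q_n\|_2^2=\int\bigl(|P_{n+1}|-1\bigr)^2\,d\sigma_n=2\bigl(1-\int|P_{n+1}|\,d\sigma_n\bigr)$, correctly reduce the theorem to showing $\int|P_{n+1}|\,d\sigma_n\not\to 1$; the singular case via Theorem \ref{dicho} is also fine. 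The gap is that in the remaining case ($\prod_k M(P_k)>0$, which forces $m_k\to\infty$) neither of the two estimates you need is proved, and neither is provable as stated. First, the decoupling $\int|P_{n+1}|\,d\sigma_n\approx\|P_{n+1}\|_1$ is exactly the step you concede is ``the technical heart of the matter,'' and for it you only say what you \emph{expect} it to require (a Bourgain-style concentration estimate or a fourth-moment bound); that is a research plan, not a proof --- the modulus $|P_{n+1}|$ has low-frequency Fourier content that can correlate with $Q_n^2$, and nothing you write controls it. Second, even granting the decoupling, the uniform gap $\|P_{n+1}\|_1\le 1-\delta$ is not available: the Salem--Zygmund CLT gives $\|P_n\|_1\to\sqrt{\pi}/2$ only under lacunarity-type conditions on the spacers (precisely the hypothesis of \cite{elabdaletds}), while for arbitrary spacers the only general estimate is Bourgain's $\|P\|_1\le 1-c\log m/m$ (Proposition \ref{BoMcps}), whose right-hand side tends to $1$ as $m\to\infty$ --- exactly the regime your case forces. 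Whether $\|P_n\|_1\to 1$ can actually occur for such $0,1$-coefficient polynomials is the open $L^1$-flatness (Littlewood-type) problem discussed in this very paper; a uniform gap of the kind you assert would, through your own reduction plus Theorem \ref{dicho}, settle the paper's Conjecture that every finite-measure rank one map has singular spectrum. So the missing step is not a technicality: it is at least as hard as an open problem.

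The paper's own proof avoids the tail entirely and extracts the strict inequality from a single, fixed factor: Proposition \ref{Mcgheeps} (a Cauchy--Schwarz bootstrap that peels off the first polynomial of the product) gives $\bigl(\int\sqrt{d\nu/dz}\,dz\bigr)^2\le\int|P_{k_0}|\,dz$, and then one strict Cauchy--Schwarz inequality, $\int|P_{k_0}|\,dz<\|P_{k_0}\|_2=1$ since $|P_{k_0}|$ is not constant, finishes the argument: no asymptotics in $n$, no flatness of tail factors, no case distinction on $\prod_k M(P_k)$. (To be fair, the paper's displayed estimate bounds the affinity $\int\sqrt{d\nu/dz}\,dz$ rather than $\int(d\nu/dz)\,dz$ itself, and the passage between the two is left implicit; but the mechanism --- locating the deficit in one fixed, manifestly non-flat factor instead of in uniform non-flatness of all late factors --- is what lets the argument close without confronting the open flatness problem, and that is the idea your proposal is missing.)
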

Notice that the spectral measure of any indicator function in any dynamical system with phase space of finite measure has a point mass at $1$. Therefore, for any rank one map acting on finite space its spectral type satisfy 
$$\bigintss \frac{d\mu}{dz} dz \leq 1-\frac{1}{|X|},$$
since $\mu $ has a mass $1/ |X|$ at $1$. 
%\noindent{}M. Nadkarni point out that $\prod_{j \geq 0} M(P_j)$ positive may implies $\ds \bigintss \frac{d\mu}{dz} dz =1.$
%Here we are able to give a positive answer to this question by proving

%\begin{lem}\label{nad2}If $\prod_{j \geq 0} M(P_j)$ is positive then
%$$ \bigintss \frac{d\mu}{dz} dz =1.$$
%\end{lem}
%\noindent{}From this we have
%\begin{Th}\label{nad4} Let $T$ be a rank one map acting on a space of finite measure. Then, the Mahler measure of its derivative is zero, that is,
%$$\prod_{j=0}^{+\infty}M\big(P_j\big)=0.$$
%\end{Th}
The proof of Theorem \ref{nad3} combined with the deep result of Bourgain \cite{Bourisr}, allows to obtain the following.
\begin{Th}[fourth main result ]\label{main2}For any rank one maps with the cutting parameter $(m_j)$ satisfying
$m_j=\theta(j^\beta)$, for some $\beta \leq 1$, we have
$$\prod_{j=0}^{+\infty}M\big(P_j\big)=0.$$
\end{Th}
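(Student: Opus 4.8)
The plan is to read the statement through the product formula of Theorem \ref{elabdal-Nad1}, which identifies $\prod_{j\ge 0}M(P_j)^2$ with the Mahler measure $M(d\mu/dz)$ of the derivative of the maximal spectral type. Since each factor lies in $(0,1]$ by Proposition \ref{basic}(vi), the infinite product vanishes if and only if $\sum_{j\ge 0}\bigl(-\log M(P_j)\bigr)=+\infty$, and by the dichotomy of Theorem \ref{dicho} this is exactly the assertion that the spectrum of $T$ is singular. Thus the whole theorem reduces to a quantitative, \emph{block by block} lower bound on the defect $1-M(P_j)$, summed against the hypothesis on the cutting parameter.

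The heart of the proof is therefore the estimate
\[
M(P_j)^2=M\bigl(|P_j|^2\bigr)=\exp\Bigl(\int_{\T}\log|P_j|^2\,dz\Bigr)\le \exp\Bigl(-\frac{c}{m_j}\Bigr)
\]
for some absolute constant $c>0$. I would obtain it as follows. The density $|P_j|^2$ is a probability density for $dz$ (that is, $\int_{\T}|P_j|^2\,dz=1$); it satisfies $0\le |P_j|^2\le m_j$ because $P_j$ is a sum of $m_j$ monomials each of coefficient $1/\sqrt{m_j}$, so $\|P_j\|_\infty\le\sqrt{m_j}$; and its second moment obeys the elementary lower bound
\[
\int_{\T}|P_j|^4\,dz=\|P_j\|_4^4\ge 2-\frac1{m_j},
\]
coming from the trivial additive quadruples among the $m_j$ distinct frequencies of $P_j$; equivalently $\Var(|P_j|^2)\ge 1-\tfrac1{m_j}$. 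A constrained extremal analysis of the concave functional $v\mapsto\log v$ under the constraints mean $1$, second moment $\ge 2-\tfrac1{m_j}$, and support in $[0,m_j]$ then forces $\int_{\T}\log|P_j|^2\,dz\le -c/m_j$: the cheapest way, for the logarithm, to carry the fixed amount of variance is to push it into a single tall peak of height $\sim m_j$, and even this optimal configuration costs the bulk a downward shift of order $1/m_j$. It is precisely here that the deep result of Bourgain \cite{Bourisr} enters, supplying the sharp rate $1/m_j$ and thereby improving the $1/m_j^2$ implicit in the Klemes--Reinhold criterion (which only reaches $\beta\le\tfrac12$).

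Granting the per-block estimate, the conclusion is immediate: from $m_j=\theta(j^\beta)$ with $\beta\le 1$ one has $\sum_{j}1/m_j\asymp\sum_j j^{-\beta}=+\infty$, whence
\[
\prod_{j\ge 0}M(P_j)^2\le \exp\Bigl(-c\sum_{j\ge0}\frac1{m_j}\Bigr)=0,
\]
so $\prod_{j\ge0}M(P_j)=0$. I expect the main obstacle to be exactly the passage from the robust $L^2$ (variance) non-flatness of $|P_j|^2$ to its $\log$-Mahler non-flatness at the \emph{sharp} rate $1/m_j$: the variance could a priori be concentrated on a set of small measure where both $\log$ and the $L^1$ norm are insensitive, and controlling the distribution of $|P_j|^2$ finely enough to exclude this is the content of Bourgain's estimate. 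A parallel and equivalent route, through Theorem \ref{affinity} together with Proposition \ref{basic}(iv), would instead bound $\prod_{j}M(P_j)\le \lim_N\int_\T\prod_{k\le N}|P_k|\,dz=\int_\T\sqrt{d\mu/dz}\,dz$ and show that the right-hand side vanishes, i.e. $\|Q_N\|_1\to0$, which is once more singularity.
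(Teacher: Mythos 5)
Your overall skeleton --- factor the Mahler measure block by block via Theorem \ref{elabdal-Nad1}, prove a per-block defect estimate, and sum the defects using $m_j=\theta(j^\beta)$, $\beta\le 1$ --- matches the paper's, and your final summation step is fine (indeed $\sum_j m_j^{-1}=+\infty$ already suffices for $\beta\le 1$). The gap is at what you yourself call the heart of the proof: the per-block bound $M(P_j)^2\le \exp(-c/m_j)$. Your proposed derivation (mean $1$, ceiling $\||P_j|^2\|_\infty\le m_j$, second moment $\ge 2-1/m_j$, then a ``constrained extremal analysis'' of $v\mapsto\log v$) is only a heuristic: you never prove that the two-valued ``tall peak'' configuration is extremal, and you concede in the next breath that the variance could sit on a small set where the logarithm is insensitive --- which is exactly the case such an analysis must exclude, and excluding it rigorously is genuinely delicate. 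You then assign the task of closing this hole to ``the deep result of Bourgain,'' but that misreads what Bourgain's result says: Proposition \ref{BoMcps} is an upper bound on an $L^1$ norm, namely $\int_{\T}\bigl|\frac1{\sqrt n}\sum_{j}z^{l_j}\bigr|\,dz\le 1-c\frac{\log n}{n}$; it is not a statement about redistributing variance in a log-extremal problem, and it cannot be cited as a black box to complete that analysis. So, as written, the central estimate of your proof is neither proved nor correctly derived from the result you invoke.

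The repair is short and is exactly what the paper does: combine Bourgain's $L^1$ bound with Proposition \ref{basic} (vi) (or (iv)), i.e.\ $M(P_j)\le\|P_j\|_1$, to get at once $M(P_j)\le 1-c\frac{\log m_j}{m_j}$ --- a per-block bound even stronger than your target $e^{-c/m_j}$ --- and then use the factorization $M\bigl(\frac{d\mu}{dz}\bigr)\le\prod_{j=0}^{k-1}\|P_j\|_1$ (obtained from Theorem \ref{elabdal-Nad1}, Lemma \ref{product} and Proposition \ref{Mcgheeps}, by partitioning the index set into $k$ residue classes) together with the divergence of $\sum_j\log(m_j)/m_j$ for $\beta\le 1$. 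In other words, no $L^4$/variance input and no extremal analysis are needed: the entire non-flatness input is Bourgain's $L^1$ estimate, converted to a Mahler-measure estimate by the elementary inequality $M(f)\le\|f\|_1$. Your closing remark (bounding $\prod_j M(P_j)\le\int_{\T}\sqrt{d\mu/dz}\,dz$ via Theorem \ref{affinity}) is indeed essentially the paper's route through Proposition \ref{Mcgheeps}, but there too the vanishing of the right-hand side is obtained from Bourgain's $L^1$ bound used in this same elementary way, not from any distributional refinement of it.
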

\noindent{}Let us point out that our methods combined with Bourgain proposition \cite{Bourisr}, allows us to deduce the following.
\begin{Cor}\label{singular}
Any rank one maps with the cutting parameter $(m_j)$ satisfying
$m_j=\theta(j^\beta)$, for some $\beta \leq 1$ have a singular spectrum.
\end{Cor}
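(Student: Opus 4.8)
The plan is to deduce Corollary~\ref{singular} directly from the fourth main result (Theorem~\ref{main2}) by combining it with the dichotomy established in the second main result (Theorem~\ref{dicho}). The logical skeleton is short: Theorem~\ref{main2} asserts that under the hypothesis $m_j=\theta(j^\beta)$ with $\beta\leq 1$, the infinite product $\prod_{j\geq 0} M(P_j)$ vanishes; Theorem~\ref{dicho} asserts that the spectral type $\mu$ of any rank one map is singular precisely when this same product $\prod_{j\geq 0} M(P_j)$ diverges (to zero). So the entire content of the corollary is the observation that ``the product is zero'' is exactly the ``diverge'' alternative in the dichotomy.

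First I would fix a rank one map $T$ with cutting parameter $(m_j)$ satisfying $m_j=\theta(j^\beta)$ for some $\beta\leq 1$, and let $\mu=\prod_{j\geq 0}|P_j|^2$ denote its maximal spectral type (up to a possible discrete component), with $P_j$ the associated polynomials of Section~\ref{rkone}. Applying Theorem~\ref{main2} to this map gives $\prod_{j=0}^{+\infty} M(P_j)=0$. The only point that requires a word of care is the interpretation of the phrase ``converge or diverge'' in the statement of Theorem~\ref{dicho}: since each factor satisfies $M(P_j)\leq \|P_j\|_1\leq \|P_j\|_2=1$ by parts~vi) and iii) of Proposition~\ref{basic} (the $L^2$-normalization of $P_j$), the partial products $\prod_{j=0}^{n}M(P_j)$ form a nonincreasing sequence in $[0,1]$, hence always converge in the ordinary sense; the dichotomy must therefore be read as ``converges to a strictly positive limit'' versus ``converges to zero,'' the latter being the divergence of $\sum_j\bigl(1-M(P_j)\bigr)$ in Kakutani's sense. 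Under this reading, $\prod_{j\geq 0}M(P_j)=0$ places us squarely in the singular alternative of Theorem~\ref{dicho}, so $\mu\perp\lambda$ and the spectrum is singular.

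The main obstacle, and really the only substantive step, is not in the present corollary at all but is inherited: it is the proof of Theorem~\ref{main2}, where the input $m_j=\theta(j^\beta)$ must be converted, via Bourgain's estimate from \cite{Bourisr}, into a quantitative lower bound on $1-M(P_j)$ (equivalently an upper bound on $M(P_j)$ bounded away from $1$ along a positive-density set of indices) sufficient to force $\sum_j\bigl(1-M(P_j)\bigr)=+\infty$. Granting Theorem~\ref{main2}, the corollary itself is immediate. I would therefore present the proof as a one-line invocation: by Theorem~\ref{main2} the Mahler product of the spectrum is zero, whence by Theorem~\ref{dicho} the spectral type is singular. The one thing I would double-check for rigor is that Theorem~\ref{dicho} applies verbatim to the spectral type arising from a subsequence product $\prod_l|P_{j_l}|^2$ (the infinite-measure case alluded to after equation~(1.2)), so that the corollary legitimately covers rank one maps on both finite and infinite measure spaces as claimed; this follows because Theorem~\ref{dicho} is stated for the spectral type of \emph{any} rank one map and the generalized Riesz product structure is identical in both cases.
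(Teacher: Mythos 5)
Your proposal is correct and is essentially identical to the paper's own proof, which reads in full: ``Straightforward, by Theorem~\ref{dicho} combined with Theorem~\ref{main2}.'' Your additional remarks (interpreting ``converge/diverge'' as positive limit versus zero limit, and noting the subsequence/infinite-measure case) are sensible clarifications of the same one-line argument, not a different route.
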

We recall that in the standard asymptotic notation called Bachmann-Landau notation \cite{Flajolet}: $m_j=\theta(j^\beta)$ means that there exits $c,C>0$ and a large $j_0$ such that for any $j \geq j_0$, we have
$c \leq \ds \frac{m_j}{j^\beta} \leq C.$

\noindent{}Furthermore, let us mention that  el Abdalaoui and Nadkarni  proved:
\begin{Th}[\cite{elabdal-Nad}]\label{AN}If each $P_k$ has less than $c.h_{k-1}$ zeros bigger than 1 in absolute value
where $c$ is a positive constant less than one, then $\ds M\Bigl(\frac{d\mu}{dz}\Bigr)=0.$
\end{Th}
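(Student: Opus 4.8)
The plan is to turn the statement into a summability question for the Mahler measures $M(P_k)$ and then to exploit Jensen's formula. By Theorem~\ref{elabdal-Nad1} together with the multiplicativity of the Mahler measure (Proposition~\ref{basic}, vii)), we have
\[
M\Bigl(\frac{d\mu}{dz}\Bigr)=\prod_{j\ge 0}M\bigl(P_j^2\bigr)=\Bigl(\prod_{j\ge 0}M(P_j)\Bigr)^2 ,
\]
so it suffices to prove $\prod_{k}M(P_k)=0$, equivalently $\sum_k\log M(P_k)=-\infty$. Since $M(|P_k|^2)\le\||P_k|^2\|_1=1$ by Proposition~\ref{basic} vi) and $M(P_k^2)=M(P_k)^2$, every term $\log M(P_k)$ is $\le 0$, so the whole point is to produce enough negative mass.

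Next I would insert the Jensen-formula expression for $\log M(P_k)$ already recorded after the definition of the Mahler measure, namely
\[
\log M(P_k)=\int_{\T}\log|P_k|\,dz=-\tfrac12\log m_k+\sum_{|\alpha|>1}\log|\alpha| ,
\]
the sum running over the zeros $\alpha$ of the monic, $\{0,1\}$-coefficient polynomial $\sqrt{m_k}\,P_k$. The factor $-\tfrac12\log m_k$ is a large reservoir of negative mass; the competing positive term is exactly $\log\prod_{|\alpha|>1}|\alpha|=\log M(\sqrt{m_k}\,P_k)$, and everything hinges on showing it cannot catch up to $\tfrac12\log m_k$.

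Here the hypothesis enters. Because $\sqrt{m_k}\,P_k$ is monic with coefficients in $\{0,1\}$, Cauchy's bound places every zero in $|\alpha|<2$; hence each zero lying outside the closed unit disc contributes at most $\log 2$, and as there are fewer than $c\,h_{k-1}$ of them,
\[
\sum_{|\alpha|>1}\log|\alpha|< c\,h_{k-1}\log 2,\qquad\text{so}\qquad \log M(P_k)<-\tfrac12\log m_k+c\,h_{k-1}\log 2 .
\]

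The final step is to sum this over $k$ and conclude that $\sum_k\log M(P_k)=-\infty$. This is the step I expect to be the main obstacle, and it is where the geometric meaning of the hypothesis is decisive: a strictly positive absolutely continuous part would force $\prod_{|\alpha|>1}|\alpha|$ to be of the order of $\sqrt{m_k}$, which, in view of the annulus bound $|\alpha|<2$, requires on the order of $\log m_k$ zeros outside the unit disc. The assumption that at most $c\,h_{k-1}$ zeros (with $c<1$) lie there is designed precisely to keep the positive term strictly below this critical level, creating a per-factor deficit whose accumulation forces $\prod_k M(P_k)$ to vanish. Making this comparison precise—controlling $\sum_{|\alpha|>1}\log|\alpha|$ sharply enough against $\tfrac12\log m_k$, using the growth relations $h_k\ge m_{k-1}h_{k-1}$ among the construction parameters—is the part I expect to require the most care, since the crude estimate $|\alpha|<2$ is lossy and one may need a finer localization of the outer zeros near the unit circle.
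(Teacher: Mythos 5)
Your reduction to showing $\prod_k M(P_k)=0$ (via Theorem \ref{elabdal-Nad1} and multiplicativity) and your use of Jensen's formula are both correct, but there is a genuine gap exactly at the step you flag, and the heuristic you offer for closing it is wrong. With only Cauchy's bound $|\alpha|<2$, the estimate $\sum_{|\alpha|>1}\log|\alpha| < c\,h_{k-1}\log 2$ is useless: the heights grow at least geometrically ($h_k\ge m_{k-1}h_{k-1}$, so $h_{k-1}\ge 2^{k-1}$), while $\tfrac12\log m_k$ may be as small as $\tfrac12\log 2$; hence your upper bound for $\log M(P_k)$ is typically a huge positive number, and summing it over $k$ proves nothing. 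Moreover, your claim that the hypothesis ``keeps the positive term below the critical level'' of order $\log m_k$ is backwards: $c\,h_{k-1}$ is in general vastly larger than $\log m_k$, so no counting argument based on the radius-$2$ disc can succeed. (Note also that the present paper does not prove Theorem \ref{AN} at all; it imports it from \cite{elabdal-Nad}, so the comparison below is with the argument of that source.)

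The missing idea is a finer localization of the zeros exploiting the gap structure of the exponents, not merely the $\{0,1\}$ coefficients. Write $\sqrt{m_k}\,P_k(z)=\sum_{j=0}^{m_k-1}z^{d_j}$ with $d_0=0$ and $d_{j+1}-d_j\ge h_k$. For $|z|=r>1$,
\[
\Bigl|\sqrt{m_k}\,P_k(z)\Bigr| \;\ge\; r^{d_{m_k-1}}-\sum_{j=0}^{m_k-2}r^{d_j}
\;\ge\; r^{d_{m_k-1}}\Bigl(1-\frac{1}{r^{h_k}-1}\Bigr),
\]
which is strictly positive as soon as $r^{h_k}>2$. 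Hence \emph{every} zero satisfies $|\alpha|\le 2^{1/h_k}$, so each outer zero contributes at most $(\log 2)/h_k$ to $\log M(\sqrt{m_k}\,P_k)$. The hypothesis then gives
\[
\sum_{|\alpha|>1}\log|\alpha| \;<\; c\,h_{k-1}\,\frac{\log 2}{h_k}\;\le\;\frac{c\log 2}{m_{k-1}}\;\le\;\frac{c}{2}\log 2,
\]
using $h_k\ge m_{k-1}h_{k-1}$ and $m_{k-1}\ge 2$. Combined with $-\tfrac12\log m_k\le-\tfrac12\log 2$, this yields the uniform per-factor deficit $\log M(P_k)\le -\tfrac{1-c}{2}\log 2<0$, whence $\sum_k\log M(P_k)=-\infty$ and $M\bigl(\tfrac{d\mu}{dz}\bigr)=\prod_k M(P_k)^2=0$. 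So your skeleton (reduction, Jensen, counting outer zeros) is the right one, but without the annulus bound $|\alpha|\le 2^{1/h_k}$ — which is where the rank one structure, i.e.\ the $h_k$-lacunarity of the exponents, actually enters — the argument cannot be completed.
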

As a consequence, we have
\begin{Cor}\label{CAN}If each $P_k$ has less than $c.h_{k-1}$ zeros bigger than 1 in absolute value
where $c$ is a positive constant less than one, then the associated rank one map has a singular spectrum.
\end{Cor}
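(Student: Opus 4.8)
The plan is to read this off as a direct combination of Theorem \ref{AN} with the dichotomy established in Theorem \ref{dicho}. First I would invoke Theorem \ref{AN}: the standing hypothesis that each $P_k$ has fewer than $c\,h_{k-1}$ zeros of modulus greater than $1$ forces $M\bigl(\frac{d\mu}{dz}\bigr)=0$, so the Mahler measure of the absolutely continuous part of the maximal spectral type vanishes.

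Next I would convert this vanishing into a statement about the product of the individual Mahler measures $M(P_j)$. By Theorem \ref{elabdal-Nad1} we have $M\bigl(\frac{d\mu}{dz}\bigr)=\prod_{j\geq 0}M\bigl(P_j^2\bigr)$, and by the multiplicativity property vii) of Proposition \ref{basic}, $M\bigl(P_j^2\bigr)=M(P_j)^2$. Hence $\bigl(\prod_{j\geq 0}M(P_j)\bigr)^2=M\bigl(\frac{d\mu}{dz}\bigr)=0$, so $\prod_{j\geq 0}M(P_j)=0$. Since each factor satisfies $M(P_j)^2=M\bigl(P_j^2\bigr)\leq\bigl\|\,|P_j|^2\,\bigr\|_1=1$ by property vi) of Proposition \ref{basic} together with the normalization $\|P_j\|_2=1$, the factors lie in $[0,1]$ and the infinite product genuinely diverges to zero rather than failing to converge by oscillation.

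Finally I would apply the dichotomy of Theorem \ref{dicho}: the spectral type is singular, or its absolutely continuous part is equivalent to Lebesgue measure with positive Mahler measure, according as $\prod_{j\geq 0}M(P_j)$ diverges or converges. Having shown that this product equals zero, the only surviving alternative is $\mu\perp\lambda$, i.e.\ the associated rank one map has singular spectrum. The only step that carries real content is the passage from the bound on the number of large zeros to $M\bigl(\frac{d\mu}{dz}\bigr)=0$, but this is precisely Theorem \ref{AN}, which I am entitled to assume; granting it, the corollary reduces to an essentially mechanical chaining of the dichotomy with the product formula of Theorem \ref{elabdal-Nad1}, and I expect no genuine obstacle.
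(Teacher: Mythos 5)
Your proof is correct and follows essentially the same route as the paper, which disposes of this corollary in a single line as the combination of the dichotomy (Theorem \ref{dicho}) with the vanishing of the Mahler measure under the zero-count hypothesis (the paper cites Theorem \ref{main2} there, evidently a slip for Theorem \ref{AN}, which is what the hypothesis of this corollary actually feeds into). The only difference is one of explicitness: you spell out the bridge the paper leaves implicit in the word ``straightforward,'' namely passing from $M\bigl(\frac{d\mu}{dz}\bigr)=0$ to the divergence of $\prod_{j\geq 0}M(P_j)$ via Theorem \ref{elabdal-Nad1}, the multiplicativity of $M$, and the observation that the factors lie in $[0,1]$ so the product can only diverge to zero.
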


\begin{rem}Almost all know results on the singularity of rank one maps can be derived from our method combined with our first main result.
\end{rem}
\section{Affinity between two measures}\label{S-tool}

The affinity or Hellinger integral between two finite measures is defined by the integral of the corresponding geometric mean. It was introduced and studied in a series of papers by K. Matusita
\cite{Matusita1},\cite{Matusita2},\cite{Matusita3} and  it is also called  Bahattacharyya coefficient \cite{Bhatta}.
 The affinity between two probability measures $\sigma$ and $\rho$ is defined by
\begin{equation}%\label{aff}
G(\sigma,\rho)=\ds \bigintss \sqrt{\frac{d\sigma}{d\tau}. \frac{d\rho}{d\tau}} d\tau.
\end{equation}
This definition does not depend on $\tau$.
The affinity is related to the Hellinger distance as it can be defined as
\[
H(\sigma,\rho)=\sqrt{2(1-G(\sigma,\rho))}.
\]
Note that $G (\sigma ,\rho )$ satisfies (by Cauchy-Schwarz inequality)
\[
 0 \leq G(\sigma,\rho) \leq  1.
\]
It is an easy exercise to see that $G(\sigma,\rho)=0$ if and only if $\sigma$ and $\rho$ are mutually singular
(denoted by $\sigma \bot \rho$.) and $G(\sigma,\rho)=1$ holds if and only if
$\sigma$ and $\rho$ are equivalent.\\

Using the affinity, T. Kamae in \cite{Kamae} and Coquet-Mand\'es-France-Kamae in \cite{Coquet-France} introduced a tools to study the spectral proprieties of $q$-multiplicative sequences. They proved the following result.
\begin{Th}[Coquet-Mand\`es-France-Kamae  \cite{Coquet-France}]\label{coquet-france}Let $(\sigma_n)$ and $(\rho_n)$ be two sequences of probability measures on the circle weakly converging to the probability measures $\sigma$ and $\rho$ respectively. Then
\begin{equation}%\label{limsup}
\limsup_{n \longrightarrow +\infty } G(\sigma_n,\rho_n) \leq G(\sigma,\rho),
 \end{equation}
%where $G(P,Q)$ is the affinity between the measure $P$ and $Q$
\end{Th}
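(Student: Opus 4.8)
The plan is to exploit a variational (partition) characterization of the affinity that turns the integral $G(\sigma,\rho)=\int\sqrt{\frac{d\sigma}{d\tau}\frac{d\rho}{d\tau}}\,d\tau$ into an infimum of quantities that are manifestly continuous under weak convergence. Writing $f=\frac{d\sigma}{d\tau}$ and $g=\frac{d\rho}{d\tau}$ for a common dominating measure $\tau$ (the value of $G$ being independent of the choice of $\tau$), the first step is to show that for every finite measurable partition $\mathcal P=\{A_1,\dots,A_m\}$ of $\T$ one has
\[
G(\sigma,\rho)\;\leq\;\sum_{i=1}^{m}\sqrt{\sigma(A_i)\,\rho(A_i)}.
\]
This follows cell by cell from the Cauchy--Schwarz inequality, since $\int_{A_i}\sqrt{fg}\,d\tau\leq\big(\int_{A_i}f\,d\tau\big)^{1/2}\big(\int_{A_i}g\,d\tau\big)^{1/2}=\sqrt{\sigma(A_i)\rho(A_i)}$, and summing over $i$.

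Next I would prove that this upper bound is sharp, i.e. that $\inf_{\mathcal P}\sum_i\sqrt{\sigma(A_i)\rho(A_i)}=G(\sigma,\rho)$. Taking a refining sequence of partitions $(\mathcal P_k)$ generating the Borel $\sigma$-algebra, the conditional expectations $f_k=\E[f\mid\mathcal P_k]$ and $g_k=\E[g\mid\mathcal P_k]$ are constant on each cell, whence $\sum_{A\in\mathcal P_k}\sqrt{\sigma(A)\rho(A)}=\int\sqrt{f_k g_k}\,d\tau$. By the martingale convergence theorem $f_k\to f$ and $g_k\to g$ almost everywhere, so $\sqrt{f_k g_k}\to\sqrt{fg}$ a.e.; since $\sqrt{f_k g_k}\leq\tfrac12(f_k+g_k)$ and the right-hand side is uniformly integrable (being conditional expectations of fixed $L^1$ functions), a Vitali argument lets me pass to the limit and conclude $\int\sqrt{f_k g_k}\,d\tau\to G(\sigma,\rho)$. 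Equivalently, one may argue directly from the superadditivity of the jointly concave, $1$-homogeneous map $(x,y)\mapsto\sqrt{xy}$, which shows that the partition sums decrease under refinement and hence descend to the integral.

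With the variational formula in hand, the upper semicontinuity reduces to a short approximation. Fix $\eps>0$ and, using the previous step, choose a partition of $\T$ into finitely many arcs $A_1,\dots,A_m$ with $\sum_i\sqrt{\sigma(A_i)\rho(A_i)}\leq G(\sigma,\rho)+\eps$; since the atoms of $\sigma$ and $\rho$ are at most countable, I may pick the endpoints of these arcs outside the atom sets, so that each $A_i$ is a continuity set for both limit measures, i.e. $\sigma(\partial A_i)=\rho(\partial A_i)=0$. Weak convergence then gives $\sigma_n(A_i)\to\sigma(A_i)$ and $\rho_n(A_i)\to\rho(A_i)$ for every $i$. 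Applying the first-step inequality to the pair $(\sigma_n,\rho_n)$ with this fixed partition and taking the $\limsup$ of the resulting finite sum yields
\[
\limsup_{n\to\infty}G(\sigma_n,\rho_n)\;\leq\;\sum_{i=1}^{m}\sqrt{\sigma(A_i)\rho(A_i)}\;\leq\;G(\sigma,\rho)+\eps,
\]
and letting $\eps\to0$ finishes the proof.

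I expect the main obstacle to be the second step --- justifying that the partition infimum actually equals the integral $G(\sigma,\rho)$ rather than merely bounding it --- because this requires a genuine limiting argument (martingale convergence together with uniform integrability, or the superadditivity/refinement monotonicity) rather than a one-line estimate. The selection of arcs with $\sigma$- and $\rho$-null boundaries is a routine but necessary technicality to make weak convergence applicable cell by cell.
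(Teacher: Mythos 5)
The first thing to say is that the paper contains no proof of this statement: Theorem \ref{coquet-france} is quoted as a known result with a citation to \cite{Coquet-France}, so there is no internal argument to compare yours against, and your proposal must be judged on its own. On its own it is correct, and it is the classical route to upper semicontinuity of the affinity: the cell-by-cell Cauchy--Schwarz bound $G(\sigma,\rho)\le\sum_i\sqrt{\sigma(A_i)\rho(A_i)}$, the identification of $G(\sigma,\rho)$ with the infimum of these partition sums via conditional expectations, martingale convergence and uniform integrability (your Vitali step is exactly the device the paper records as Theorem \ref{Vitali}), and finally the Portmanteau theorem applied to one fixed, near-optimal partition made of continuity sets. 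This gives a self-contained proof of a result the paper merely imports, which is a genuine addition.

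The one step you should tighten is the selection of the arcs. As phrased (``choose a partition with sum $\le G(\sigma,\rho)+\eps$; \ldots\ I may pick the endpoints outside the atom sets''), it reads as an after-the-fact perturbation, and that operation is not harmless: moving an endpoint across a common atom of $\sigma$ and $\rho$ transfers the atom's mass to the neighbouring cell, and since partition sums only increase under aggregation of mass (your own superadditivity remark), the sum can jump by an amount bounded away from zero. For instance, take $\sigma=\frac12\delta_x+\frac12\lambda_U$ and $\rho=\frac12\delta_x+\frac12\lambda_L$, where $\lambda_U,\lambda_L$ are normalized Lebesgue measures on the upper and lower semicircles and $x$ lies in the upper one; then $G(\sigma,\rho)=\frac12$, a near-optimal partition isolates $x$ in a short arc, and pushing that arc's endpoint past $x$, so that the atom joins the long upper arc, raises the sum to about $\frac1{\sqrt{2}}$. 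The repair is immediate and should be made explicit: either run your Step 2 with refining partitions whose endpoints lie in a countable dense set disjoint from the (countable) set of atoms of $\sigma$ and $\rho$, so that the near-optimal partition consists of continuity sets from the outset; or perturb each endpoint only into the adjacent cell that does not contain the atom, using continuity from above of the measures to keep the transferred mass arbitrarily small. With that adjustment the argument is complete.
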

Let us mention that the affinity methods can be used to establish the celebrated Kakutani theorem and Hajek-Feldmen theorem \cite{Prato}.\\

G. Ritter \cite{Ritter} and Brown-Moran  \cite{Brown-Moran} use the same methods in the context of the classical Riesz products. They mentioned that the dissociation can be viewed as a analogous of the stochastic independence. Thus, in this context, the analogous of Kakutani and Hajek-Feldmen theorems is known as Zygmund dichotomy theorem \cite[pp.263-264]{Zygmund}\\

 Later, using the affinity methods combined with the Bourgain tools, el Abdalaoui \cite{elabdalisr} established that almost surely the spectral types of Ornstein maps are mutually singular.\\

Here, we are able to obtain a refinement of the Coquet-Mand\'es-France-Kamae theorem (Theorem \ref{coquet-france}) by proving that the sequence $\prod_{k\geq 0}|P_k|$ converge in $L^1$ to the square root of the derivative of $\mu$. It is turn out that this result is a strong ingredient in the proof of our main results.
We shall need also the following lemma inspired by Bourgain and Kilmer-Seaki methods.
\begin{lem}\label{orth}
Let $\rho$ be a probability measure on the circle $\T$ and $\sigma_n=f_n d\rho$ be a sequence of probability measures on $\T$ such that
\begin{enumerate}
  \item $\sigma_n$ converge weakly to some probability measure $\tau$.
  \item $f_n$ is positive almost everywhere with respect to $\tau$ and $\ds \Biggl( \bigintss \frac1{f_n} d\tau\Biggr)$ is a bounded
  sequence. Then
\end{enumerate}
$$\tau \perp \rho \Longleftrightarrow \lim_{n \longrightarrow +\infty}\bigintss \sqrt{f_n} d\rho =0.$$
\end{lem}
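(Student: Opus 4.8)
The plan is to recognize the quantity $\int\sqrt{f_n}\,d\rho$ as an affinity and then to run the two implications against Theorem \ref{coquet-france} and an elementary H\"older estimate fed by hypothesis (2). Since $\sigma_n=f_n\,d\rho\ll\rho$, taking $\rho$ itself as the dominating measure gives $\frac{d\sigma_n}{d\rho}=f_n$ and $\frac{d\rho}{d\rho}=1$, so that
$$G(\sigma_n,\rho)=\int\sqrt{f_n\cdot 1}\,d\rho=\int\sqrt{f_n}\,d\rho.$$
Thus the assertion reduces to $\tau\perp\rho\Longleftrightarrow\lim_n G(\sigma_n,\rho)=0$, and I would prove the two directions separately.

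For the forward implication I would argue directly from the semicontinuity of the affinity. Since $\sigma_n\to\tau$ weakly by hypothesis (1) and the constant sequence $\rho$ converges trivially to $\rho$, Theorem \ref{coquet-france} yields $\limsup_n G(\sigma_n,\rho)\le G(\tau,\rho)$. If $\tau\perp\rho$ then $G(\tau,\rho)=0$, and as each $G(\sigma_n,\rho)\ge 0$ we conclude $\lim_n\int\sqrt{f_n}\,d\rho=0$. This direction is immediate and uses only the weak convergence, not hypothesis (2).

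The converse is the substantive part, and I would establish its contrapositive: assuming $\tau\not\perp\rho$, produce a lower bound for $\int\sqrt{f_n}\,d\rho$ uniform in $n$. Write the Lebesgue decomposition $\tau=\tau_a+\tau_s$ with $\tau_a=h\,d\rho\ll\rho$ and $\tau_s\perp\rho$; non-singularity means $\int h\,d\rho>0$. Since $\tau_a\le\tau$ and $1/f_n\ge 0$, hypothesis (2) transfers to the absolutely continuous part: $\int\frac{h}{f_n}\,d\rho=\int\frac1{f_n}\,d\tau_a\le\int\frac1{f_n}\,d\tau\le C$ for a constant $C$ independent of $n$. To convert control of $\int h/f_n$ into a lower bound on $\int\sqrt{f_n}$, I would apply H\"older's inequality with exponents $3$ and $3/2$ to the factorization $h=\big(h/f_n\big)^{1/3}\big(h\sqrt{f_n}\big)^{2/3}$, obtaining
$$\int h\,d\rho\le\Big(\int\frac{h}{f_n}\,d\rho\Big)^{1/3}\Big(\int h\sqrt{f_n}\,d\rho\Big)^{2/3}\le C^{1/3}\Big(\int h\sqrt{f_n}\,d\rho\Big)^{2/3},$$
so that $\int h\sqrt{f_n}\,d\rho\ge\big(\int h\,d\rho\big)^{3/2}C^{-1/2}>0$.

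The one remaining difficulty, which I expect to be the \emph{main obstacle}, is that the density $h$ need not be bounded, so $\int h\sqrt{f_n}\,d\rho$ cannot be dominated outright by a multiple of $\int\sqrt{f_n}\,d\rho$. I would remove this by truncation: replace $h$ throughout by $h_M=\min(h,M)$, which is legitimate because $h_M\le h$ preserves the bound $\int h_M/f_n\,d\rho\le C$, and choose $M$ large enough that $\int h_M\,d\rho>0$ (possible since $h\in L^1(\rho)$ and $\int h\,d\rho>0$). Then $\int h_M\sqrt{f_n}\,d\rho\le M\int\sqrt{f_n}\,d\rho$ gives
$$\int\sqrt{f_n}\,d\rho\ge\frac1M\Big(\int h_M\,d\rho\Big)^{3/2}C^{-1/2},$$
a positive bound independent of $n$; hence $\liminf_n\int\sqrt{f_n}\,d\rho>0$ and the limit cannot vanish, which is the desired contrapositive. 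A minor secondary point, settled by the non-negativity of the integrands and the Lebesgue decomposition above, is the verification that $\rho$ is a legitimate common dominating measure identifying $G(\sigma_n,\rho)$ with $\int\sqrt{f_n}\,d\rho$ and that hypothesis (2) indeed passes to $\tau_a$.
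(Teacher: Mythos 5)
Your proof is correct, and while the forward implication coincides with the paper's (both deduce $\limsup_n G(\sigma_n,\rho)\le G(\tau,\rho)=0$ from Theorem \ref{coquet-france} and the vanishing of the affinity of mutually singular measures), your converse takes a genuinely different route. The paper does not argue by contrapositive: it feeds hypothesis (2) through Cauchy--Schwarz, $\int \frac{1}{\sqrt{f_n}}\,d\tau\le\bigl(\int \frac{1}{f_n}\,d\tau\bigr)^{1/2}$ (using that $\tau$ is a probability measure), and then invokes the Kilmer--Saeki lemma stated just below it in the text: if for every $\varepsilon>0$ there is $g>0$ $\tau$-a.e.\ with $\bigl(\int g\,d\rho\bigr)\bigl(\int \frac{1}{g}\,d\tau\bigr)<\varepsilon$, then $\rho\perp\tau$; taking $g=\sqrt{f_n}$ for large $n$ finishes the proof. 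You instead avoid Kilmer--Saeki altogether: you take the Lebesgue decomposition $\tau=h\,d\rho+\tau_s$, truncate the density to $h_M=\min(h,M)$, and apply H\"older with exponents $(3,3/2)$ to the factorization $h_M=\bigl(h_M/f_n\bigr)^{1/3}\bigl(h_M\sqrt{f_n}\bigr)^{2/3}$, which is exactly the exponent choice needed to pair the bounded quantity $\int h_M/f_n\,d\rho\le C$ against $\sqrt{f_n}$, yielding the uniform lower bound $\int\sqrt{f_n}\,d\rho\ge M^{-1}C^{-1/2}\bigl(\int h_M\,d\rho\bigr)^{3/2}>0$ whenever $\tau\not\perp\rho$. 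Both arguments rest on the same mechanism (the common part of $\tau$ and $\rho$ is squeezed between a bounded integral of $1/f_n$ and a vanishing integral of $\sqrt{f_n}$), but the paper's version is shorter because it delegates that squeeze to Kilmer--Saeki, whose proof (via the largest common minorant $\tau'$ and Cauchy--Schwarz) is reproduced separately; yours is self-contained and elementary, and has the small added merit of producing an explicit quantitative lower bound, at the cost of the truncation step needed to handle an unbounded density $h$.
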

\begin{proof} Suppose $\tau$ and $\rho$ are mutually singular. Then, the affinity $G(\tau,\rho)$ is zero, which, by Coquet-Mand\'es-France-Kamae theorem (Theorem \ref{coquet-france}), gives
$$\lim_{n \longrightarrow +\infty}\bigintss \sqrt{f_n} d\rho=0.$$ Conversely, let $\varepsilon$ be a positive number, then there exists a large integer $n_0$ such that
$$\bigintss \sqrt{f_n} d\rho <\varepsilon.$$
 But, by our assumption (2), there exists $C>0$, such that,
 $$ \bigintss \frac1{f_n} d\tau<C.$$
Without loss of generality, let us assume  that $C=1$. Therefore, by Cauchy-Schwarz inequality, we have
$$\bigintss \frac1{\sqrt{f_n}} d\tau <1.$$
Hence,
$$\Biggl(\bigintss \sqrt{f_n} d\rho \Biggr) \Biggl(\bigintss \frac1{\sqrt{f_n}} d\tau\Biggr)<\varepsilon,$$
and this  implies that $\tau$ and $\rho$ are mutually singular, by the lemma below.
\end{proof}
The lemma is due to Kilmer and Saeki \cite{KilmerS}. We include the proof for the reader's convenience.
\begin{lem}[Kilmer-Saeki \cite{KilmerS}]Let $\rho$ and $\tau$ be a nonnegative finite measures on measurable space
$X$. Then the following properties are equivalent:
\begin{enumerate}[a)]
  \item $\rho \perp \tau.$
  \item Given $\varepsilon>0$, there exists nonnegative measurable function $f$ on $X$ such that
  $f>0,~\tau-$a.e. and such that
  \[
\Biggl(\bigintss f d\rho \Biggr) \Biggl(\bigintss \frac1{f} d\tau \Biggr) <\varepsilon.
  \]
\end{enumerate}
\end{lem}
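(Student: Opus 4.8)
The plan is to prove the two implications separately, with (b)$\Rightarrow$(a) carrying essentially all of the content.

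\textbf{The easy direction (a)$\Rightarrow$(b).} Assuming $\rho \perp \tau$, I would fix a measurable set $A$ carrying $\rho$ with $\tau(A)=0$ and $\rho(A^c)=0$. For a parameter $\delta\in{]0,1[}$ set
\[
f_\delta=\delta\,\1_A+\frac1\delta\,\1_{A^c},
\]
which is strictly positive everywhere, hence in particular $\tau$-a.e. Because the cross terms fall on $\rho$- and $\tau$-null sets, one gets $\int f_\delta\,d\rho=\delta\,\rho(A)$ and $\int \frac1{f_\delta}\,d\tau=\delta\,\tau(A^c)$, so the product equals $\delta^2\rho(A)\tau(A^c)\le\delta^2\rho(X)\tau(X)$. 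Choosing $\delta$ small makes this smaller than any prescribed $\varepsilon$, which is exactly (b).

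\textbf{The main direction (b)$\Rightarrow$(a).} Here I would choose a sequence $\varepsilon_n$ that is \emph{summable after taking square roots}, e.g. $\varepsilon_n=4^{-n}$, and pick $f_n>0$ ($\tau$-a.e.) with $a_n b_n<\varepsilon_n$, where $a_n=\int f_n\,d\rho$ and $b_n=\int\frac1{f_n}\,d\tau$. The degenerate cases $a_n=0$ (forcing $f_n=0$ $\rho$-a.e.) or $b_n=+\infty$ already yield singularity directly and can be set aside. The central idea is to convert each $f_n$ into a set that almost separates $\rho$ from $\tau$, via Markov's inequality applied at a threshold that \emph{balances the two tails}. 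With $t_n=\sqrt{a_n/b_n}$ and $A_n=\{f_n<t_n\}$, Markov gives
\[
\rho(A_n^c)=\rho\{f_n\ge t_n\}\le \frac{a_n}{t_n}=\sqrt{a_n b_n},\qquad
\tau(A_n)=\tau\Bigl\{\tfrac1{f_n}>\tfrac1{t_n}\Bigr\}\le t_n b_n=\sqrt{a_n b_n},
\]
so both quantities are bounded by $\sqrt{\varepsilon_n}=2^{-n}$, hence summable.

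\textbf{Passage to a single singular set.} Since $\sum_n\tau(A_n)<\infty$ and $\sum_n\rho(A_n^c)<\infty$, the Borel--Cantelli lemma applies to both. Taking $B=\liminf_n A_n$, I would conclude $\tau(B)\le\tau(\limsup_n A_n)=0$ and $\rho(B^c)=\rho(\limsup_n A_n^c)=0$, so $\rho$ is concentrated on $B$ while $\tau(B)=0$; that is $\rho\perp\tau$. The step I expect to be the crux is the choice of the balancing threshold $t_n=\sqrt{a_n/b_n}$, which is what forces both Markov bounds to be controlled by the single quantity $\sqrt{a_n b_n}$, together with the Borel--Cantelli extraction of a genuine singular decomposition from the sequence of merely approximately separating sets $A_n$ (which is why $\varepsilon_n$ must be chosen summable only \emph{after} the square root is taken). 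The two easy implications in the other direction are then just bookkeeping on null sets.
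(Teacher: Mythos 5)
Your proof is correct, and the direction (b)$\Rightarrow$(a) takes a genuinely different route from the paper's. The paper argues softly: it lets $\tau'$ be the largest measure with $\tau'\le\rho$ and $\tau'\le\tau$, writes $\tau'(X)=\int\sqrt{f}\,\frac1{\sqrt{f}}\,d\tau'$, and applies Cauchy--Schwarz to get $\tau'(X)\le\bigl(\int f\,d\rho\bigr)^{1/2}\bigl(\int\frac1f\,d\tau\bigr)^{1/2}\le\sqrt{\varepsilon}$; since $\varepsilon$ is arbitrary, $\tau'=0$, and singularity follows. This needs only one $f$ per $\varepsilon$ and no extraction, but it leans on the lattice fact that the common minorant $\rho\wedge\tau$ exists and that its vanishing is equivalent to $\rho\perp\tau$ --- a standard fact whose proof hides a Lebesgue-decomposition argument of roughly the weight of what you do explicitly. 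Your route instead converts each $f_n$ into an almost-separating set by Markov's inequality at the balanced threshold $t_n=\sqrt{a_n/b_n}$ (the balancing is exactly what makes both tails controlled by the single quantity $\sqrt{a_nb_n}$), and then extracts an honest separating set $B=\liminf_n A_n$ by Borel--Cantelli, which is why you need $\varepsilon_n$ summable after taking square roots. What your approach buys is self-containedness and constructiveness: no appeal to $\rho\wedge\tau$ or any decomposition theorem, and an explicit set witnessing singularity; the cost is the sequential extraction in place of the paper's two-line estimate. (Your degenerate cases are handled adequately; the only one you omit, $b_n=0$, forces $\tau=0$ for finite-valued $f_n$ and is vacuous.) The (a)$\Rightarrow$(b) directions are essentially identical --- indicator-built $f$ supported on the separating sets --- and your version with the independent parameter $\delta$ is in fact slightly cleaner than the paper's, which takes the parameter equal to $\varepsilon$ itself and so tacitly needs $\varepsilon\,\rho(X)\tau(X)<1$.
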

\begin{proof} Suppose $a)$ obtains. Then, there exists two disjoint measurable sets $A,B$ such that
$\rho(A)=\rho(X)$ and $\tau(B)=\tau(X).$ For a given $\varepsilon>0,$ put
$$f=\varepsilon \1_A+\frac1{\varepsilon}\1_B.$$
We thus get
$$\Biggl(\bigintss f d\rho \Biggr) \Biggl(\bigintss \frac1{f} d\tau\Biggr)=\varepsilon \rho(X) \varepsilon \tau(X) <\varepsilon,$$
and this establish $b)$.\\
  Conversely suppose $b)$ obtains. Let $\tau'$ be a large measure such that $\tau' \leq \rho$ and $\tau'\leq \tau$. Given
  $\varepsilon>0$, let $f$ be a function furnished by $b)$. Since $f>0$, $\tau$-a.e., we also have
  $f>0$, $\tau'-$a.e.. Therefore, Cauchy-Schwarz inequality combined with $b)$ yields
  \begin{eqnarray*}
  \tau'(X)&=&\bigintss \sqrt{f} \frac1{\sqrt{f}} d\tau' \\
  &\leq&
\Biggl(\bigintss fd\tau' \Biggr)^\frac12 \Biggl(\bigintss \frac1{f} d\tau'\Biggr)^\frac12 \\
&\leq&  \Biggl(\bigintss fd\rho \Biggr)^\frac12 \Biggl(\bigintss \frac1{f} d\tau\Biggr)^\frac12\\
&\leq& \sqrt{\varepsilon}.
  \end{eqnarray*}
Since $\varepsilon>0$ was arbitrary, this gives that $\tau'=0$, which means that, $\rho \perp \tau.$ The proof of the lemma is complete.
\end{proof}
Let us recall the following important and classical fact from Probability Theory connected to the notion
of the uniform integrability.
\begin{Def}Let $(X,\cb,\Pro)$ be a probability space and $p \in [1,+\infty[$. A sequence $\{f_n, n\geq 1\}$ in $L^p(X)$ is said to be $L^p(X)$ uniformly integrable if
\[
\lim_{c \longrightarrow +\infty}\sup_{n \in \N}\bigintss_{\big\{|f_n| >c\big \}} \big|f_n\big |^p~~  d\Pro=0.
\]
\end{Def}
\noindent{}It is well-known that if
\begin{eqnarray}%\label{uniform}
\sup_{ n \in \N}\Big|\Big|f_n\Big|\Big|_{p+\varepsilon} < +\infty,
\end{eqnarray}
for some $\varepsilon$ positive, then the sequence $\{f_n\}$ is $L^p$ uniformly integrable.\\

\noindent{}It is obvious that the almost everywhere convergence does not in general imply
the convergence in $L^p(X)$. Nevertheless, it is well known that the condition of domination insure such convergence (Lebesgue's Dominated Convergence Theorem) but in the absence of domination the following Vital Convergence Theorem allows us to obtain the convergence in $L^p(X)$ provided that the sequence is uniformly integrability.
\begin{Th}[Vitali Convergence Theorem ] \label{Vitali}Let $(X,\cb,\Pro)$ be a probability space and $\{f_n\}$ be a uniformly integrable sequence in $L^p(X)$ which converges almost surely to some function $f$. Then $f$ is in $L^p(X)$ and
\[
\Big|\Big|f_n-f\Big|\Big|_p \tend{n}{+\infty}0.
\]
\end{Th}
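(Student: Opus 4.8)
The plan is to prove the two assertions in turn: first that $f\in L^p(X)$, then the $L^p$-convergence. The whole argument will exploit at every step that $\Pro$ is a \emph{finite} (probability) measure.

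For the first assertion I would start from uniform integrability to produce a uniform $L^p$-bound. Choosing $c_0$ so large that $\sup_n\int_{\{|f_n|>c_0\}}|f_n|^p\,d\Pro\le 1$ and splitting the integral over $\{|f_n|\le c_0\}$ and its complement gives $\int|f_n|^p\,d\Pro\le c_0^p\,\Pro(X)+1=c_0^p+1$ for every $n$, so $\sup_n\||f_n|\|_p<\infty$. Since $f_n\to f$ almost surely, the map $x\mapsto|x|^p$ is continuous, so $|f_n|^p\to|f|^p$ a.s., and Fatou's lemma yields $\int|f|^p\,d\Pro\le\liminf_n\int|f_n|^p\,d\Pro<\infty$, whence $f\in L^p(X)$.

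The second assertion I would obtain by an Egorov-type splitting. First I would record the standard reformulation of uniform integrability on a finite measure space: for every $\eps>0$ there is $\delta>0$ such that $\Pro(A)<\delta$ forces $\sup_n\int_A|f_n|^p\,d\Pro<\eps$. This follows by choosing $c$ with $\sup_n\int_{\{|f_n|>c\}}|f_n|^p\,d\Pro<\eps/2$ and then taking $\delta=\eps/(2c^p)$, and the same absolute continuity holds for the single integrable function $|f|^p$ (now that $f\in L^p$). Fix $\eps>0$ and the associated $\delta$. By Egorov's theorem, available because $\Pro(X)<\infty$ and $f_n\to f$ a.s., there is a set $E$ with $\Pro(X\setminus E)<\delta$ on which $f_n\to f$ uniformly. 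I would then split
\[
\int_X|f_n-f|^p\,d\Pro=\int_E|f_n-f|^p\,d\Pro+\int_{X\setminus E}|f_n-f|^p\,d\Pro.
\]
On $E$ uniform convergence makes the first term at most $\big(\sup_E|f_n-f|\big)^p\,\Pro(X)$, which tends to $0$. For the second term I would use the elementary inequality $|a-b|^p\le 2^{p-1}\big(|a|^p+|b|^p\big)$, valid since $p\ge 1$, together with the absolute continuity from the previous step to obtain $\int_{X\setminus E}|f_n-f|^p\,d\Pro\le 2^{p-1}\big(\int_{X\setminus E}|f_n|^p\,d\Pro+\int_{X\setminus E}|f|^p\,d\Pro\big)<2^{p}\eps$ uniformly in $n$. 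Letting $n\to\infty$ gives $\limsup_n\int_X|f_n-f|^p\,d\Pro\le 2^p\eps$, and since $\eps>0$ was arbitrary we conclude $\||f_n-f|\|_p\to 0$.

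The argument is otherwise routine; the one point deserving care is the passage from the stated truncation form of uniform integrability to its uniform absolute continuity form, which is precisely where the finiteness $\Pro(X)=1$ enters (as it also does through the availability of Egorov's theorem and through the Fatou bound). That is the step I would treat most carefully.
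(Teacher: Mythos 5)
Your proof is correct, but there is nothing internal to compare it against: the paper does not prove the Vitali Convergence Theorem at all --- it states it and refers the reader to \cite[pp.134-135]{Rudin} and \cite[pp.165-167]{Schwartz}. What you have written is, in substance, the standard textbook argument that those references contain, and every step is valid. In particular: the truncation level $c_0$ plus $\Pro(X)=1$ gives the uniform bound $\sup_n\|f_n\|_p^p\le c_0^p+1$, and Fatou then puts $f$ in $L^p$; the passage from the truncation form of uniform integrability (which is the definition the paper adopts) to the uniform absolute-continuity form via $\delta=\varepsilon/(2c^p)$ is exactly the point where finiteness of the measure is needed, and you are right to flag it, since on an infinite measure space this implication fails and the theorem itself needs an extra tightness hypothesis; Egorov's theorem is likewise legitimately invoked because $\Pro$ is finite; and the convexity inequality $|a-b|^p\le 2^{p-1}\bigl(|a|^p+|b|^p\bigr)$ is available because the paper's standing assumption is $p\in[1,+\infty[$. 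Two cosmetic remarks only: you should take the final $\delta$ to be the minimum of the one working uniformly for $(|f_n|^p)_n$ and the one working for the single function $|f|^p$ (you implicitly do this), and the Egorov set $E$ can be intersected with nothing further --- your bound $\int_E|f_n-f|^p\,d\Pro\le\bigl(\sup_E|f_n-f|\bigr)^p$ already suffices. An alternative route, avoiding Egorov, is to note that a.s.\ convergence gives convergence in measure and then apply the uniform absolute continuity directly to the sets $\{|f_n-f|>\eta\}$; this is the variant some texts prefer, but it buys nothing essential here.
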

\noindent{}For the proof of Theorem \ref{Vitali} we refer the reader to \cite[pp.134-135]{Rudin}, \cite[p. 165-167]{Schwartz} .\\

\section{Proof the main results}\label{S-Proofs}
Let $\sigma$ and $\tau$ be two measures on the circle. Then, by Lebesgue decomposition of  $\sigma$ with respect to $\tau$, we have
\[
\sigma=\frac{d\sigma}{d\tau} d\tau+\sigma_s,
\]
where $\sigma_s$ is singular to $\tau$ and $\ds\frac{d\sigma}{d\tau}$ is the Radon-Nikodym derivative. In the case of generalized Riesz product $\mu$, we are able to establish  that the following.

\begin{Prop}\label{Mcgheeps}Let $\mathcal{N}=\big\{k_0 <k_1 < k_2<\cdots\big\}$ be a subsequence and
$\nu=\prod_{ k \in \mathcal{N} } |P_k|^2$. Then, we have
\begin{eqnarray}%\label{bo1}
\bigintss \sqrt{\frac{d\nu}{dz}}dz  \leq \Bigg(\bigintss \Big|P_{k_0}(z)\Big| dz\Bigg)^{\frac12}.
\end{eqnarray}
\end{Prop}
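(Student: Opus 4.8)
The plan is to combine the $L^1$-convergence of the partial products from Theorem \ref{affinity} with two carefully arranged applications of the Cauchy--Schwarz inequality. Write, for $n \geq k_0$,
\[
R_n = \prod_{\overset{k \in \mathcal{N}}{k \leq n}} |P_k| \quad\text{and}\quad S_n = \prod_{\overset{k \in \mathcal{N}}{k_0 < k \leq n}} |P_k|,
\]
so that $R_n = |P_{k_0}|\, S_n$. Applying Theorem \ref{affinity} to the subsequence $\mathcal{N}$ shows that $R_n \to \sqrt{d\nu/dz}$ in $L^1(dz)$; in particular $\int \sqrt{d\nu/dz}\, dz = \lim_{n} \int R_n\, dz$, so it suffices to bound $\int R_n\, dz$ uniformly in $n$.

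The first application of Cauchy--Schwarz is the crucial one: I split the factor $|P_{k_0}|$ as $|P_{k_0}|^{1/2} \cdot |P_{k_0}|^{1/2}$, which gives
\[
\int R_n\, dz = \int |P_{k_0}|^{1/2}\,\bigl(|P_{k_0}|^{1/2} S_n\bigr)\, dz \leq \Bigl(\int |P_{k_0}|\, dz\Bigr)^{1/2}\,\Bigl(\int |P_{k_0}|\, S_n^2\, dz\Bigr)^{1/2}.
\]
This splitting is what produces the first power of $|P_{k_0}|$ under the integral on the right-hand side of the statement, rather than the trivial quantity $\int |P_{k_0}|^2\, dz = 1$ that a naive estimate would yield.

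It then remains to show that the second factor is at most $1$. Here I use that $S_n^2\, dz = \prod_{k \in \mathcal{N},\, k_0 < k \leq n} |P_k|^2\, dz$ is a probability measure, being a partial product in the definition of the generalized Riesz product over $\mathcal{N} \setminus \{k_0\}$. A second Cauchy--Schwarz, now with respect to this measure, gives
\[
\int |P_{k_0}|\, S_n^2\, dz \leq \Bigl(\int |P_{k_0}|^2\, S_n^2\, dz\Bigr)^{1/2}\Bigl(\int S_n^2\, dz\Bigr)^{1/2} = \Bigl(\int R_n^2\, dz\Bigr)^{1/2},
\]
and $\int R_n^2\, dz = 1$ because $R_n^2\, dz = \prod_{k \in \mathcal{N},\, k \leq n} |P_k|^2\, dz$ is itself a probability measure (a partial product of $\nu$). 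Hence $\int |P_{k_0}|\, S_n^2\, dz \leq 1$, so $\int R_n\, dz \leq (\int |P_{k_0}|\, dz)^{1/2}$ for every $n$, and letting $n \to \infty$ yields the asserted inequality.

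I do not expect a serious obstacle: the whole argument rests on Theorem \ref{affinity} (already available) together with elementary inequalities and the mass-$1$ property of the generalized Riesz products. The only genuinely non-automatic step is the choice to halve $|P_{k_0}|$ in the first Cauchy--Schwarz, so that the probability-measure property can absorb the remaining factor; getting this bookkeeping right is the single point to be careful about.
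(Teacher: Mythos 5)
Your proof is correct and follows essentially the same route as the paper's: the identical double application of Cauchy--Schwarz, splitting $|P_{k_0}|$ as $|P_{k_0}|^{1/2}\cdot|P_{k_0}|^{1/2}$ and then absorbing the remaining factor via the mass-one property of the partial products. The only (cosmetic) difference is that you pass to the limit using the $L^1$-convergence of Theorem \ref{affinity}, whereas the paper tests against a continuous positive function $h$ and invokes the weak convergence of Proposition \ref{weakly}; your version is, if anything, the cleaner write-up of the same argument.
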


\noindent{}In \cite{Bourisr}, J. Bourgain proved the following proposition.

\begin{Prop}[Bourgain \cite{Bourisr}]\label{BoMcps}Let $n$ be a positive integer and $~0\leq l_1<l_2<\cdots<l_n$ be a finite sequence of non negative integers. Then, there exists an absolute constant $c>0$ such that
\[
\bigintss \Big|\frac1{\sqrt{n}} \sum_{j=1}^{n} z^{l_i}\Big| dz \leq 1-c \frac{\log(n)}{n}.
\]
\end{Prop}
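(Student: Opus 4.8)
The plan is to treat Proposition \ref{BoMcps} as a purely harmonic-analytic statement about the normalized exponential sum $f=\frac{1}{\sqrt n}\sum_{j=1}^{n}z^{l_j}$, which is orthonormalized so that $\|f\|_2=1$. The entire content is that $\|f\|_1$ must fall below $1$ by at least $c\log(n)/n$, so the first move is to convert the $L^1$ deficit into an $L^2$ statement. Since $\int_{\T}dz=1$ and $\int_{\T}|f|^2\,dz=1$, one has the exact identity
\[
1-\|f\|_1=\tfrac12\int_{\T}\bigl(1-|f|\bigr)^2\,dz,
\]
and because $(1-t)^2\ge\tfrac14 t^2$ for $t\ge 2$, the right-hand side is bounded below by $\tfrac18\int_{\{|f|\ge 2\}}|f|^2\,dz$. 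Thus it suffices to show that the part of the $L^2$-mass of $f$ carried by its large values is at least of order $\log(n)/n$; this is the genuine target and it is where the difficulty lies.

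To see that \emph{some} definite mass is guaranteed, I would first extract a crude bound. A direct count of the representations $l_i-l_j=l_k-l_\ell$ gives $\|f\|_4^4=\frac{1}{n^2}\#\{(i,j,k,\ell):l_i-l_j=l_k-l_\ell\}\ge 2-\frac1n$, whence $\bigl\||f|^2-1\bigr\|_2^2=\|f\|_4^4-1\ge 1-\frac1n$. Writing $1-|f|=\frac{1-|f|^2}{1+|f|}$ and using the pointwise bound $\|f\|_\infty=|f(1)|=\sqrt n$ to control the weight $(1+|f|)^{-2}\ge (1+\sqrt n)^{-2}$, one obtains $1-\|f\|_1\gtrsim 1/n$. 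This already proves the weaker estimate $\|f\|_1\le 1-c/n$, so the whole remaining difficulty is the extra logarithmic factor.

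The logarithm should come from a multiscale decomposition of the large values of $f$. I would dyadically split $\{|f|\ge 2\}$ into the level sets $E_k=\{2^k\le |f|<2^{k+1}\}$ for $1\le k\lesssim \tfrac12\log_2 n$, the top scale being forced by $\|f\|_\infty=\sqrt n$, and try to show that at \emph{each} scale the sum has a genuine peak contributing $\gtrsim 1/n$ of $L^2$-mass, so that summing over the $\sim\log n$ scales yields $\int_{\{|f|\ge 2\}}|f|^2\,dz\gtrsim \log(n)/n$. The peaks themselves are produced by resonances of the exponential sum: at $z=1$ all $n$ frequencies align and $|f|=\sqrt n$, while more generally a pigeonhole argument on the residues of the $l_j$ modulo suitable moduli $q$ locates, at and near the corresponding $q$-th roots of unity, arcs on which a definite proportion of the frequencies interfere constructively. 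The delicate point is to organize these arcs across scales so that their contributions are essentially disjoint and each of the correct size.

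The step I expect to be the main obstacle is precisely this last one: for an \emph{arbitrary} configuration $l_1<\cdots<l_n$ — not an arithmetic progression, where the peak structure is transparent — one must show that the large-value set truly spreads over order $\log n$ independent scales rather than collapsing onto a single peak. This is the hard combinatorial and harmonic core of the estimate, and it is the reason the result is invoked here as a theorem of Bourgain rather than reproved: the elementary reduction above delivers only $1-c/n$, and the logarithmic gain cannot be recovered from second-moment (i.e.\ $L^4$) considerations alone.
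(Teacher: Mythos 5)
You should first be aware that the paper contains no proof of this proposition: it is stated as a quotation of Bourgain's theorem from \cite{Bourisr} and used as a black box, so the only question is whether your attempt is a valid self-contained proof. It is not, and you say so yourself. The elementary part of your argument is correct and clean: with $f=\frac1{\sqrt n}\sum_j z^{l_j}$ and $\|f\|_2=1$ one indeed has $1-\|f\|_1=\frac12\int_{\T}(1-|f|)^2\,dz$; the quadruple count gives $\|f\|_4^4\ge 2-\frac1n$, hence $\int (|f|^2-1)^2\,dz\ge 1-\frac1n$; and writing $(1-|f|)^2=(1-|f|^2)^2/(1+|f|)^2$ together with $\|f\|_\infty\le\sqrt n$ yields $\|f\|_1\le 1-c/n$. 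But that weak bound is the easy part; the entire content of Bourgain's proposition is the factor $\log n$, and for that you offer only a programme (dyadically split the large values and find, at each of roughly $\log_2\sqrt{n}$ scales, a peak carrying $L^2$-mass of order $1/n$) whose crucial step you yourself label the main obstacle and never carry out. An argument whose essential claim is deferred back to the very reference being proved is not a proof; this is a genuine gap, not a stylistic difference from the paper.

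Beyond being unfinished, the proposed route is doubtful in itself. Your reduction discards the contribution to $\int(1-|f|)^2$ coming from the sets where $|f|$ is small or moderately far from $1$, and retains only $\int_{\{|f|\ge 2\}}|f|^2\,dz$; it is not known, and there is no visible reason to believe, that this truncated quantity is bounded below by $c\log(n)/n$ for \emph{every} configuration. When the gaps are huge (say $l_n\gg n^2$), the unavoidable peak at $z=1$ carries $|f|^2$-mass only about $n/l_n\ll\log(n)/n$, and none of your moment bounds rules out a configuration staying below $2$ elsewhere: $\|f\|_4\ge 2^{1/4}$ forces nothing above level $2$, and whether $0$--$1$ polynomials can be that flat is exactly the circle of open problems (Littlewood-type flatness, Bourgain's conjectured uniform bound on $\sup\|f\|_1$) discussed in this paper. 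Likewise, your peak-producing mechanism --- pigeonholing the residues of the $l_j$ modulo small $q$ to force constructive interference near roots of unity --- does nothing for lacunary or generic frequency sets, whose residues equidistribute and whose large values come from Gaussian-type fluctuations rather than arithmetic resonance. The correct summary is therefore: you have proved $\|f\|_1\le 1-c/n$ by elementary means, correctly observed that second-moment ($L^4$) information alone cannot produce the logarithm, and, exactly like the paper, must cite \cite{Bourisr} for the statement as claimed.
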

\noindent{}Proposition \ref{Mcgheeps} combined with Proposition \ref{BoMcps} yields the following
\begin{Cor}Let $\mathcal{N}=\big\{k_0 <k_1 < k_2<\cdots\big\}$ be a subsequence and
$\nu=\prod_{ k \in \mathcal{N} } |P_k|^2$. Then,
\begin{eqnarray}%\label{bo2}
\Biggl(\bigintss \sqrt{\frac{d\nu}{dz}} dz \Biggr)\leq \Biggl(1-c\frac{\log(m_{k_0})}{m_{k_0}}\Biggr)^{\frac12},
\end{eqnarray}
for some absolute constant $c>0$.
\end{Cor}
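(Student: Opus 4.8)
The plan is to obtain the corollary as a direct combination of the two immediately preceding propositions, the only point requiring genuine care being the verification that $P_{k_0}$ fits exactly into the hypothesis of Bourgain's estimate. First I would recall from the definition that
$$P_{k_0}(z)=\frac1{\sqrt{m_{k_0}}}\sum_{j=0}^{m_{k_0}-1}z^{jh_{k_0}+s(k_0,j)},$$
so that $\sqrt{m_{k_0}}\,P_{k_0}$ is a sum of precisely $m_{k_0}$ monomials. Since $h_{k_0}>0$ and $j\mapsto s(k_0,j)=\sum_{i=1}^{j}a_i^{(k_0)}$ is non-decreasing (the spacers being non-negative), the exponents $jh_{k_0}+s(k_0,j)$ are strictly increasing in $j$, hence pairwise distinct non-negative integers. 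Thus $P_{k_0}$ is exactly a polynomial of the form $\frac1{\sqrt{n}}\sum_{i=1}^{n}z^{l_i}$ with $n=m_{k_0}$ and $0\leq l_1<l_2<\cdots<l_n$, which is the situation covered by Proposition \ref{BoMcps}.

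Next I would invoke Proposition \ref{BoMcps} with this value $n=m_{k_0}$, obtaining
$$\bigintss \Big|P_{k_0}(z)\Big|\,dz \leq 1-c\,\frac{\log(m_{k_0})}{m_{k_0}}$$
for the absolute constant $c>0$ furnished there. Finally, chaining this with the bound of Proposition \ref{Mcgheeps}, namely $\bigintss \sqrt{d\nu/dz}\,dz \leq \big(\bigintss |P_{k_0}(z)|\,dz\big)^{\frac12}$, and using the monotonicity of $t\mapsto t^{\frac12}$ on $[0,1]$, I would conclude
$$\bigintss \sqrt{\frac{d\nu}{dz}}\,dz \leq \Biggl(1-c\,\frac{\log(m_{k_0})}{m_{k_0}}\Biggr)^{\frac12},$$
which is the claimed inequality.

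There is no real obstacle here, since the argument is essentially a substitution of one estimate into another; the one place to be attentive is the sign convention on the exponents. Bourgain's proposition is stated for non-negative integers $l_i$, whereas the spectral-type formula writes $P_k$ using negative powers of $z$. This is harmless: on the unit circle $|z|=1$, so multiplying $P_{k_0}$ by any monomial $z^{m}$ leaves $|P_{k_0}(z)|$, and therefore $\bigintss|P_{k_0}(z)|\,dz$, unchanged, and one may always renormalize to strictly increasing non-negative exponents. What truly matters for the $\log(n)/n$ saving is merely the count of terms, $n=m_{k_0}$, and it is this count — not the particular spacer values — that drives the estimate.
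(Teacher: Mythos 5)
Your proposal is correct and is exactly the paper's own argument: the paper states the corollary as the immediate combination of Proposition \ref{Mcgheeps} with Bourgain's estimate (Proposition \ref{BoMcps}), which is precisely your chaining of the two bounds, and your extra care about the term count $n=m_{k_0}$ and the harmless sign/normalization of the exponents of $P_{k_0}$ on the unit circle only makes explicit what the paper leaves implicit.
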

\begin{proof}[\bf Proof of Proposition \ref{Mcgheeps}] Let $N$ be a positive integer and $h$ be a continuous positive function. Then, By Cauchy-Schwarz inequality, we have
\begin{eqnarray*}
\Biggl(\bigintss \prod_{\overset{k \in \mathcal{N},} {k \leq N}} h \Big| P_k(z)\Big| dz\Biggr)^2
&\leq& \Biggl(\bigintss h\big|P_{k_0}(z)\big| dz\Biggr) \Biggl(\bigintss h \big|P_{k_0}(z)\big| \prod_{ \overset{k \in \mathcal{N}\setminus\{k_0\},}
{k \leq N}} \big|P_k(z)\big|^2 dz\Biggr).
\end{eqnarray*}
By letting $N$ goes to infinity, from Proposition \ref{weakly}, we deduce that
\begin{eqnarray}%\label{majore2}
\Biggl(\bigintss  h \sqrt{\frac{d\nu}{dz}}~dz\Biggr)^2
&\leq& \Biggl(\bigintss h\Big|P_{k_0}(z)\Big| dz\Biggr) \Biggl(\bigintss h d\nu\Biggr)^{\frac12} \Biggl(\bigintss h \frac{d\nu}{|P_{k_0}(z)|^2}\Biggr)^{\frac12}.
\end{eqnarray}
Since
\begin{align}
\bigintss h \Big|P_{k_0}(z)\Big| \prod_{ \overset{k \in \mathcal{N}\setminus\{k_0\},}
{k \leq N}} \Big|P_k(z)\Big|^2 dz=\bigintss h \prod_{\overset{k \in \mathcal{N},}
{k \leq N}} \Big|P_k(z)\Big|  \prod_{ \overset{k \in \mathcal{N}\setminus\{k_0\},}
{k \leq N}} \Big|P_k(z)\Big| dz  \nonumber \\
\leq \Biggl(\bigintss h \prod_{\overset{k \in \mathcal{N},}
{k \leq N}} \Big|P_k(z)\Big|^2  dz\Biggr)^{\frac12}  \Biggl( \bigintss h\prod_{ \overset{k \in \mathcal{N}\setminus\{k_0\},}
{k \leq N}} \Big|P_k(z)\Big|^2 dz\Biggr)^{\frac12},
\end{align}
Whence
\[
\Biggl(\bigintss \sqrt{\frac{d\nu}{dz}} dz \Biggr)^{2} \leq \bigintss \Big|P_{k_0}(z)\Big| dz, 
\]
and this proved the proposition.
\end{proof}
\noindent{}From Proposition \ref{Mcgheeps} combined with Lemma \ref{product}, we have the following lemma.
\begin{lem}%\label{Leb} 
For any $k \in \N^*$, we have
\begin{eqnarray}%\label{versL}
\bigintss \sqrt[2k]{\frac{d\mu}{dz}} dz \leq \prod_{j=0}^{k-1}\Big(1-c\frac{\log(m_j)}{m_j}\Big)^\frac{1}{2k}.
\end{eqnarray}
\end{lem}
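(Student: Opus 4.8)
The plan is to reduce the statement to a single application of the generalized Hölder inequality, once the density $d\mu/dz$ has been factored across a partition of the index set whose parts each have a prescribed least element.

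First I would partition $\N$ into the $k$ residue classes modulo $k$, namely $\mathcal{N}_i=\{i+nk:n\geq 0\}$ for $0\leq i\leq k-1$; by construction $\min\mathcal{N}_i=i$ and $\N=\bigcup_{i=0}^{k-1}\mathcal{N}_i$ is a disjoint union. Setting $\mu_i=\prod_{j\in\mathcal{N}_i}|P_j|^2$, I would establish the $k$-fold analogue of Lemma \ref{product},
\[
\sqrt{\frac{d\mu}{dz}}=\prod_{i=0}^{k-1}\sqrt{\frac{d\mu_i}{dz}}\qquad\text{a.e.},
\]
equivalently $\bigl(d\mu/dz\bigr)^{1/(2k)}=\prod_{i=0}^{k-1}\bigl(d\mu_i/dz\bigr)^{1/(2k)}$. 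This follows by iterating the two-factor statement: by Theorem \ref{affinity} each finite subproduct $\prod_{j\in\mathcal{N}_i,\,j\leq n}|P_j|$ converges in $L^1$ to $\sqrt{d\mu_i/dz}$ and the full product converges to $\sqrt{d\mu/dz}$, so passing to one common subsequence along which all $k+1$ of these $L^1$-limits hold pointwise almost everywhere and then multiplying yields the displayed identity a.e.

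Second, writing $f_i\setdef\sqrt{d\mu_i/dz}$, note that $\int f_i\,dz<+\infty$ (it is the $L^1$-limit above), so the integrand is the geometric mean $\bigl(d\mu/dz\bigr)^{1/(2k)}=\prod_{i=0}^{k-1}f_i^{1/k}$ of $k$ functions lying in $L^1(dz)$. Applying Hölder's inequality with the $k$ conjugate exponents all equal to $k$, so that $\sum_{i=0}^{k-1}1/k=1$, gives
\[
\int\Big(\frac{d\mu}{dz}\Big)^{\frac1{2k}}dz=\int\prod_{i=0}^{k-1}f_i^{1/k}\,dz\leq\prod_{i=0}^{k-1}\Big(\int f_i\,dz\Big)^{1/k}.
\]

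Finally I would bound each factor by invoking Proposition \ref{Mcgheeps} on the subsequence $\mathcal{N}_i$, whose least element is $i$: this gives $\int f_i\,dz=\int\sqrt{d\mu_i/dz}\,dz\leq\bigl(\int|P_i|\,dz\bigr)^{1/2}$, after which Bourgain's Proposition \ref{BoMcps} with $n=m_i$ yields $\int|P_i|\,dz\leq 1-c\log(m_i)/m_i$, exactly as in the Corollary above. Substituting produces
\[
\int\Big(\frac{d\mu}{dz}\Big)^{\frac1{2k}}dz\leq\prod_{i=0}^{k-1}\Big(1-c\frac{\log(m_i)}{m_i}\Big)^{\frac1{2k}},
\]
which is the assertion. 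The only step demanding genuine care is the first one: the almost-everywhere multiplicativity of $\sqrt{d\mu/dz}$ across the $k$-part partition, since it rests on extracting a single subsequence along which all of the subproducts converge simultaneously; granting that, the Hölder and Bourgain estimates are routine.
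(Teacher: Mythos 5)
Your proposal is correct and follows essentially the same route as the paper's own proof: the Euclidean partition of $\N$ into residue classes modulo $k$, the $k$-fold extension of Lemma \ref{product} to factor $\sqrt{d\mu/dz}$, Proposition \ref{Mcgheeps} applied to each class (whose least element is $i$), and then H\"{o}lder's inequality combined with Bourgain's Proposition \ref{BoMcps}. If anything, you spell out the common-subsequence argument behind the $k$-fold factorization and the H\"{o}lder step more explicitly than the paper does.
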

\begin{proof}
Let $n_0$ be a positive integer and $\mathcal{N}_j$, $j=1,\cdots,n_0$ be a partition of $\N$. Denoted by $\mu_j$ the generalized
Riesz product construct over the subsequence $\mathcal{N}_j$ for each $j=1,2,\cdots,n_0$. Hence, by Lemma \ref{product}, we deduce that
 $d\mu/dz$ is the product of  $d\mu_i/dz$, $j=1,2,\cdots, n_0$. Indeed, one may take the Euclidian partition given by classifying the integers modulo $k$. Hence, by Lemma \ref{product}, we have
$$\sqrt{\frac{d\mu}{dz}} =\prod_{i=1}^{k} \sqrt{\frac{d\mu_i}{dz}}.$$
Consequently, by Proposition \ref{Mcgheeps} we get
\begin{eqnarray}%\label{mahlerzer0}
\Biggl(\bigintss \sqrt{\frac{d\mu_i}{dz}} dz \Biggr)^2\leq \bigintss |P_i| dz, {\textrm{~for~}} i=0,1,\cdots,k-1.
\end{eqnarray}
From this and  H\"{o}lder inequality, we conclude that,
\begin{eqnarray}%\label{Versl2}
\bigintss \sqrt[2k]{\frac{d\mu}{dz}} dz \leq \prod_{j=0}^{k-1}\Big(1-c\frac{\log(m_j)}{m_j}\Big)^\frac{1}{2k}.
\end{eqnarray}
which proved the lemma.
\end{proof}
\noindent{}Now, let us proved Theorem  \ref{main2}.
\begin{proof}[\textbf{Proof of Theorem \ref{main2}}]It is easy to check from \eqref{mahlerzer0} that, for any $k \geq 1$,
 \begin{eqnarray}%\label{mahler1}
M\Big(\frac{d\mu}{dz}\Big)\leq \prod_{j=0}^{k-1}||P_j||_1.
\end{eqnarray}
This combined with Proposition \ref{BoMcps}, yields
\[
M\Biggl(\frac{d\mu}{dz}\Biggr)=\prod_{j=0}^{k-1}M\Big(\sqrt{\frac{d\mu_j}{dz}}\Big)\leq \prod_{j=0}^{k-1}\Big(1-c\frac{log(m_j)}{m_j}\Big).
\]
Taking into account that $m_j=\theta(j^\beta)$, with $\beta \leq 1$, it follows that
\[
M\Biggl(\frac{d\mu}{dz}\Biggr) \leq \prod_{j=0}^{k-1}\Big(1-c\frac{log(m_j)}{m_j}\Big) \tend{j}{+\infty}0,
\]
which proves the theorem.
\end{proof}
At this point let us present the proof of Theorem \ref{nad3}.
\begin{proof}[\textbf{Proof of Theorem \ref{nad3}}]By Proposition \ref{Mcgheeps} we have
$$\bigintss \sqrt{\frac{d\mu}{dz}} dz \leq \Bigg(\bigintss \Big|P_{m_0}(z)\Big| dz\Bigg)^{\frac12}.$$
Moreover, by Cauchy-Schwarz inequality, we can write 
$$\bigintss \Big|P_{m_0}(z)\Big| dz<1,$$
Since the equality in Cauchy-Schwarz inequality holds if and only if $\Big|P_{m_0}(z)\Big|$ is a constant polynomial which is impossible in our case. The proof of the theorem is complete.
\end{proof}
\begin{rem}The proof of Theorem \ref{nad3} can be obtained also by combining Propositions \ref{Mcgheeps} and \ref{BoMcps}. 
\end{rem}

We shall need the following lemma
\begin{lem}\label{MahlerConverge}
Let $\tau$ be a finite measure on a Borel space $X$. Suppose that the sequences of positive functions $(f_n)_{n \geq 0}$ and
$\ds \frac1{f_n}$  in $L^1(\tau)$ converge in $L^1(\tau) $ to
$1$. Then, there is a subsequence $(f_{n_i})_{i \geq 0}$ such that the Mahler measure of $f_{n_i}$  converge to 1, that is,
\[
\exp\Biggl(\bigintss \log\Bigl(f_{n_i}\Bigr) d\tau\Biggr) \tend{i}{+\infty}1.
\]
\end{lem}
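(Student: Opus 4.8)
The plan is to reduce the statement to showing that the \emph{exponent} of the Mahler measure tends to zero, i.e. that $\bigintss \log(f_{n_i}) \, d\tau \to 0$ along a suitable subsequence, since the exponential is continuous. Both hypotheses, $f_n \to 1$ and $1/f_n \to 1$ in $L^1(\tau)$, give convergence in measure, so one may first extract a subsequence $(f_{n_i})$ along which both $f_{n_i} \to 1$ and $1/f_{n_i} \to 1$ hold $\tau$-almost everywhere; this is the source of the subsequence in the statement and guarantees $\log f_{n_i} \to 0$ a.e. The real content, however, is an integrable domination allowing one to pass to the limit under the integral sign.

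The key tool I would use is the elementary one-sided inequality $\log t \leq t-1$, valid for every $t>0$. Applying it to $t=f_n$ gives $\log f_n \leq f_n - 1$, and applying it to $t=1/f_n$ gives $-\log f_n = \log(1/f_n) \leq 1/f_n - 1$. Together these sandwich the logarithm,
\[
1 - \frac{1}{f_n} \;\leq\; \log f_n \;\leq\; f_n - 1,
\]
and a short case check (according as $f_n \geq 1$ or $f_n < 1$) upgrades this to the two-sided pointwise bound
\[
\bigl|\log f_n\bigr| \;\leq\; \bigl|f_n - 1\bigr| + \Bigl|\tfrac{1}{f_n} - 1\Bigr|.
\]
In particular $\log f_n$ is $\tau$-integrable for each $n$, so the Mahler-measure exponent is well defined.

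Integrating the last inequality and invoking the two hypotheses yields
\[
\Biggl| \bigintss \log(f_n) \, d\tau \Biggr| \;\leq\; \bigintss \bigl|f_n - 1\bigr| \, d\tau + \bigintss \Bigl|\tfrac{1}{f_n} - 1\Bigr| \, d\tau \;\tend{n}{+\infty}\; 0,
\]
so in fact the \emph{whole} sequence of exponents tends to $0$ and the asserted subsequence follows a fortiori; alternatively, one may keep the a.e.\ convergence of $\log f_{n_i}$ and apply the generalized (Pratt) dominated convergence theorem with the dominating functions $\bigl|f_{n_i}-1\bigr|+\bigl|1/f_{n_i}-1\bigr|$, whose integrals vanish, in the spirit of the Vitali theorem recalled above. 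Exponentiating gives $\exp\bigl(\bigintss \log(f_{n_i})\,d\tau\bigr) \to 1$, which is the claim. The \textbf{main obstacle} is precisely the integrability and limit-passage for $\log f_n$, which is delicate because the logarithm is unbounded both near $0$ and near $+\infty$: controlling $f_n \to 1$ alone would not prevent the integral $\bigintss \log f_n \, d\tau$ from drifting to $-\infty$ on small sets where $f_n$ is tiny. It is exactly the simultaneous $L^1$-control of $1/f_n$ that furnishes the lower bound $1 - 1/f_n$ and closes this gap.
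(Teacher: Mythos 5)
Your proof is correct, and it takes a genuinely different and more elementary route than the paper's. The paper first extracts a subsequence along which $f_{n_i}$ and $1/f_{n_i}$ converge $\tau$-almost everywhere to $1$, splits the logarithm as $\log f_{n_i}=\log^{+}f_{n_i}-\log^{-}f_{n_i}$, proves uniform integrability of both parts by a tail estimate (the bound $\log x\le x$ for $x\ge 1$ applied to $\sqrt{f_{n_i}}$, followed by the Cauchy--Schwarz and Markov inequalities, giving tails of order $e^{-C/2}$), and then invokes the Vitali convergence theorem (Theorem \ref{Vitali}) to conclude $\int \bigl|\log f_{n_i}\bigr|\,d\tau\to 0$. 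You replace all of this machinery by the single pointwise domination $\bigl|\log f_n\bigr|\le \bigl|f_n-1\bigr|+\bigl|1/f_n-1\bigr|$, a consequence of $\log t\le t-1$; integrating it and using both hypotheses drives the right-hand side to zero. Your route buys three things: it proves the conclusion for the \emph{whole} sequence, showing that the subsequence in the statement is an artifact of the paper's method (which needs a.e.\ convergence before Vitali can be applied) rather than a genuine necessity; it dispenses with uniform integrability and the Vitali theorem entirely; and it isolates cleanly why the hypothesis on $1/f_n$ is indispensable --- it is precisely what dominates $\log^{-}f_n$, the part of the logarithm that could otherwise drift to $-\infty$ on sets where $f_n$ is small. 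Nothing is lost relative to the paper's version either: the separate $L^1(\tau)$ convergence of $\log^{+}f_{n_i}$ and $\log^{-}f_{n_i}$ to $0$, which the proof of Theorem \ref{dicho} later uses, also follows from your bound, since pointwise $\log^{+}f_n\le \bigl|f_n-1\bigr|$ and $\log^{-}f_n\le \bigl|1/f_n-1\bigr|$.
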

\begin{proof}By our assumption there is a subsequence $(f_{n_i})_{i \geq 0}$ such that $f_{n_i}$ and $\ds \frac1{f_{n_i}}$ converge almost everywhere to $1$. Therefore,
\[
\log^{+}\big(f_{n_i}\big) \tend{i}{\infty} 0  {\textrm {~~and~~}}  \log^{-}\big(f_{n_i}\big) \tend{i}{\infty} 0,
\]
\noindent{}where $\log^{+}(t)=\max(0,log(t))$ and $\log^{-}(t)=\max(0,-log(t))$. We claim that the sequence
$\log(f_{n_i})_{i \geq 0}$ converge to $0$ in $L^1(\lambda)$. Indeed,
the sequences  $\log^{+}\big(f_{n_i}\big)$ and $\log^{-}\big(f_{n_i}\big)$ are uniformly integrable. Since, for any positive number $C$  we have
\begin{eqnarray*}
\Big\{\log^{+}\big(f_{n_i}\big)>C\Big\}&=&\Big\{\log^{+}\big(f_{n_i}\big)>C, f_{n_i} \leq 1 \Big\} \bigcup
\Big\{\log^{+}\big(f_{n_i}\big)>C, f_{n_i} > 1 \Big\}\\
&=&\Big\{f_{n_i}>e^C, f_{n_i} > 1 \Big\}=\Big\{f_{n_i}>e^C \Big\}.
\end{eqnarray*}
\noindent{}Hence,
\begin{eqnarray*}
\frac12\bigintss_{\big\{ \log^{+}\big(f_{n_i}\big) >C\big\}} \log^{+}\big(f_{n_i}\big) d\tau
&=&
\bigintss_{\big\{ f_{n_i} >e^C\big\}} \log\big(\sqrt{f_{n_i}}\big) d\tau\\
&\leq&
\bigintss_{\big\{ f_{n_i} >e^C\big\}} \sqrt{f_{n_i}} d\tau,
\end{eqnarray*}
since $\log(x) \leq x$ for any $x \geq 1$. This combined with Cauchy-Schwarz inequality yields
\begin{eqnarray*}
\frac12\bigintss_{\big\{ \log^{+}\big(f_{n_i}\big) >C\big\}} \log^{+}\big(f_{n_i}\big) d\tau
&\leq& \tau\Biggl(\Bigl\{ f_{n_i} >e^C\Bigr\}\Biggr)^{\frac12} \Biggl(\bigintss f_{n_i} d\tau \Biggr)^{\frac12},
\end{eqnarray*}
and, by Markov inequality, we get
\begin{eqnarray*}
\frac12\bigintss_{\big\{ \log^{+}\big(f_{n_i}\big) >C\big\}} \log^{+}\big(f_{n_i}\big) d\tau
&\leq& e^{-C/2} \sup_{i \geq 0}||f_{n_i}||_1\\
&\leq& e^{-C/2} K \tend{C}{+\infty} 0,
\end{eqnarray*}
\noindent{}where $K$ is some positive constant such that $\ds \sup_{i \geq 0}||f_{n_i}||_1<K$. It is remains to prove that
$\log^{-}\big(f_{n_0}\big)$ is uniformly integrable. Notice that
\begin{eqnarray}
\Big\{\log^{-}\big(f_{n_i}\big)>C\Big\}&=&\Big\{\log^{-}\big(f_{n_i}\big)>C, f_{n_i} < 1 \Big\} \bigcup
\Big\{\log^{-}\big(f_{n_i}\big)>C, f_{n_i} \geq 1 \Big\} \nonumber\\
&=&\Big\{{\frac1{f_{n_i}}}>e^C, f_{n_i} <1 \Big\}= \Big\{{\frac1{f_{n_i}}}>e^C \Big\}.
\end{eqnarray}
\noindent{}Therefore, in the same manner as before, we obtain
\begin{eqnarray*}
\bigintss_{\big\{ \log^{-}\big(f_{n_i}\big) >C\big\}} \log^{-}\big(f_{n_i}\big) d\tau
&\leq& e^{-C/2} K' \tend{C}{+\infty} 0,
\end{eqnarray*}
\noindent{}where $K'$ is some positive constant such that $\ds \sup_{i \geq 0}\Bigl|\Bigl|\frac1{f_{n_i}}\Bigr|\Bigr|_1<K'$.  Thus, by Vitali Convergence Theorem, it follows that $\log^{+}\big(f_{n_i}\big)$ and $\log^{-}\big(f_{n_i}\big)$ converge to $0$ in $L^1(\tau)$, and this gives
\[
\bigintss \Bigr|\log\big(f_{n_i}\big)\Bigl| d\tau \tend{i}{\infty} 0,
\]
which yields that the Mahler measure of the sequence $(f_{n_i})$  converge to 1, that is,
\[
M_{\tau}(f_{n_i}) =\exp\Biggl(\bigintss \log\Bigl(f_{n_i}\Bigr) d\tau\Biggr) \tend{i}{\infty}1,
\]
 and the proof of the lemma is achieved.
\end{proof}

\begin{proof}[\textbf{Proof of Theorem \ref{dicho}}]Put
$$e=\prod_{j \geq 0 }M(P_j) {\textrm{~~,~~}}Q_n=\prod_{j=0}^{n}|P_j|,
f=\sqrt{\frac{d\mu}{dz}}  {\textrm{~~and~~}} B=\Big\{f^2>0\Big\}.$$
Suppose $e$ is positive. Then,
by Proposition \ref{basic}, it follows that
$$\lambda(B)=1,$$
which means that the absolutely continuous part of $\mu$ is equivalent to Lebesgue measure $\lambda$. Conversely, assume that $\lambda$ is equivalent to $\frac{d\mu}{d\lambda}d\lambda$ then for any non-negative integer $n$,  $\lambda$ is equivalent to $\frac{d\mu_n}{d\lambda}d\lambda$, and  by Theorem \ref{affinity},  we can write
$$\sqrt{\frac{d\mu_n}{d\lambda}}=\prod_{j=n}^{+\infty}|P_j|,$$
\noindent{}in the sense of $L^1(dz)$. Put

 $$\phi_n=\sqrt{\frac{d\mu_n}{d\lambda}} \; \; \textrm{and} \;\;\nu_{n}=\min(1,\phi_n^2)~ dz,~~n \in \N.$$
It follows that
%that
for each $n \in \N$,  $\nu_{n} \leq \mu_n$ and $\nu_n \leq \lambda$ and,
there is a subsequence $(\prod_{j=0}^{N_k}|P_j|)$ which converge almost everywhere to $\phi_0$ and $\phi_0$ is positive almost everywhere. Hence, by Cauchy criterium, for almost all $z \in \T$,
\[
\prod_{k=n}^{+\infty}|P_k(z)| \tend{n}{\infty}1,
\]
that is, for almost all $z \in \T$,
\begin{eqnarray}\label{Cauchy}
\phi_n(z) \tend{n}{\infty}1,
\end{eqnarray}
and since $(\phi_n(z))_{n \geq 0}$ is bounded in $L^2(dz)$, we have that $(\phi_n(z))_{n \geq 0}$ is uniformly integrable in
$L^1(dz)$, which implies, by Vitali Convergence Theorem (Theorem \ref{Vitali}), that $(\phi_n(z))_{n \geq 0}$ converge in
$L^1(dz)$ to $1$. Obviously, from \eqref{Cauchy}, for almost all $z \in \T$,
 \begin{eqnarray}\label{Cauchyinv}
{\frac1{\phi_n(z)}} \tend{n}{\infty}1,
\end{eqnarray}
again we have  that $\ds \Bigl({\frac1{\phi_n(z)}}\Bigr)_{n \geq 0}$ is uniformly integrable in
$L^1(\phi^2_0 dz)=L^1(d\mu)$. Indeed,
\[
\bigintss {\frac1{\phi^2_n(z)}} \phi^2_0 dz=\bigintss {\frac1{\phi^2_n(z)}} d\mu=
\bigintss {\frac1{\phi^2_n(z)}} Q_n^2 d\mu_{n}=\bigintss Q_n^2 dz=1.
\]
and this gives that $\ds \Bigl({\frac1{\phi_n(z)}}\Bigr)_{n \geq 0}$ converge to $1$ in $L^1(\phi^2_0 dz)$, hence
$\ds \Bigl({\frac1{\phi_n(z)}}\Bigr)_{n \geq 0}$ converge to $1$ in $L^1(\nu_0)$, since
\begin{eqnarray}%\label{laracine}
\bigintss \Bigg|{\frac1{{\phi_n(z)}}}-1\Bigg| d\nu_0
\leq \bigintss \Bigg|{\frac1{\phi_n(z)}}-1\Bigg| d\mu.
\end{eqnarray}
\noindent{}We thus get that the sequences $(\phi_n(z))_{n \geq 0}$ and $\ds
\left(\frac1{\phi_n(z)}\right)_{n \geq 0}$ converge to $1$ in $L^1(\nu_0)$. Whence, by virtue of Lemma \ref{MahlerConverge},
the Mahler measure of a subsequence $({\phi_{n_i}(z)})_{i \geq 0}$ converge to $1$, and by Theorem \ref{AN}, we have
\[
\prod_{j=n_i}^{+\infty}M(P_j) \tend{i}{+\infty}1.
\]
We thus get that $e$ is positive, since for any positive integer $n$, we have
\[
M\Biggl(\frac{d\mu}{dz}\Biggr)=\prod_{0}^{n-1}M(P_j).M\Biggl(\frac{d\mu_n}{dz}\Biggr).
\]
It is remains to prove the second part of the theorem. Suppose $\mu$ and $\lambda$ are mutually singular. Then, by Lemma \ref{orth}, we have
$$\lim_{N} \bigintss \prod_{j=0}^{N}\Big|P_j(z)\Big| =0,$$
which, by Proposition \ref{basic}, gives
$$\lim_{N}  \prod_{j=0}^{N}M(P_j) =0=e.$$  For the converse, suppose that $e=0$ and $\mu \not \perp \lambda$. Then, there exists
a positive measure $\tau$ such that $\tau \leq \mu$ and $\tau \leq \lambda$. Notice that we further have
\[
M\Biggl(\frac{d\mu_n}{dz}\Biggr)=0, {\textrm{~~for~~all~~}} n\in \N, \eqno (\star)
\]
by Theorem \ref{AN}.

We shall apply the same reasoning as before. Since $Q_n$ converge to $\phi_0$ in $L^1(dz)$, it follows easily that $Q_n$ converge to $\phi_0$ in $L^1(\tau)$ and there is a subsequence $Q_{n_i}$ such that, for almost all $z \in \T$ (with respect to
$\tau$),
\[
\prod_{j=0}^{n_i-1}\Bigl|P_j(z)\Bigr| \tend{i}{+\infty}\phi_0(z).
\]
Hence, for almost all $z \in \T$ (with respect to
$\tau$)
\[
\phi_{n_i} \setdef \prod_{j=n_i}^{+\infty}\Bigl|P_j(z)\Bigr| \tend{i}{+\infty}1.
\]
\noindent{}Moreover the sequences $\Bigl(\phi_{n_i}\Bigr)$ and
$\Bigl(\ds {\frac1{\phi_{n_i}}}\Bigr)$ are uniformly integrable in $L^1(\tau)$, since $\tau \leq \lambda$ and $\tau \leq \mu$. Therefore, by Vitali Convergence Theorem,
$\Bigl(\phi_{n_i}\Bigr)$ and $\Bigl(\ds {\frac1{\phi_{n_i}}}\Bigr)$ converge in $L^1(\tau)$ to $1$. Hence, by Lemma \ref{MahlerConverge}, we have
\[
\exp\Biggr(\bigintss \log\Bigr(\prod_{j=n_i}^{+\infty}\bigl|P_j(z)\bigr|\Bigl) d\tau\Biggl) \tend{i}{+\infty}1.
\]
Notice that the careful application of Lemma \ref{MahlerConverge} yields that
$$\bigintss \log^{+}\Bigr(\prod_{j=n_i}^{+\infty}\bigl|P_j(z)\bigr|\Bigl) d\tau,$$
and this gives that
$$\bigintss \log^{+}\Bigr(\prod_{j=0}^{+\infty}\bigl|P_j(z)\bigr|\Bigl) d\tau<+\infty.$$
We may also take $\tau=\prod_{j=0}^{+\infty}\bigl|P_j(z)| dz$ and repeated the same reasoning to get
$$\bigintss \sqrt{\frac{d\mu}{dz}}\log^{+}\Bigr(\sqrt{\frac{d\mu}{dz}}\Bigl) d\tau<+\infty.$$
We thus get
$$F(z)=\bigintss \frac{e^{it}+z}{e^{it}-z} \sqrt{\frac{d\mu}{dz}}(t) dt \in \Ha_1,$$
by $L log L$ Zygmund theorem (Theorem \ref{zyg}) . This forces $\sqrt{\frac{d\mu}{dz}}$ to be a non zero constant function a.e., and yields that
\[
\prod_{j=0}^{+\infty}\exp\Biggr(\bigintss \log\Bigr(\bigl|P_j(z)\bigr|\Bigl) dz\Biggl)=e>0,
\]
which is impossible in view of ($\star$) combined with Theorem \ref{AN}. Thus $\tau \equiv 0$, that is,  $\mu \perp \lambda,$  and this finishes the proof of the theorem.
\end{proof}

\begin{rem}  1) By a standard argument from spectral analysis \cite[pp. 17]{Queffelec1},
\begin{eqnarray*}
M\Bigl(\frac{d\mu}{dz}\Bigr)&=&\inf_{P \in A_0}\bigintss d\sigma_{(1-P)(U_T)\xi_0}\\
&=&\inf_{P \in A_0}\Bigl\|\xi_0-\bigl(P(U_T)(\xi_0)\bigr)\Bigr\|_2\\
&=&\inf_{\chi \in H_{-1}}\Bigl\|\xi_0-\chi\Bigr\|_2,
\end{eqnarray*}
where $H_{-1}$ is the closed subspace generated by $\big\{T^k \xi_0, k \leq -1\big\}$. Thus
,by Fenchel-Rockafellar duality theorem \cite[pp. 15-17]{Brezis},  we have
$$M\Bigl(\frac{d\mu}{dz}\Bigr)=\sup_{\overset{\chi \in {{H_{-1}}^{\perp}}}{||\chi||_2 \leq 1}}\Biggl|\bigintss \chi \xi_0 d\Pro\Biggr|,$$
where $H_{-1}^{\perp}$ is the orthogonal complement of $H_{-1}$. Assume that we have
$M\Bigl(\frac{d\mu}{dz}\Bigr)=0$. Then, Our proof yields that the spectral type of the associated rank one is singular, we thus have $H_{-1}=L^2(X)$, where $H_{-1}$ is the closed subspace generated by $\big\{T^k \xi_0, k \leq -1\big\}$ and
$$\sup_{\overset{\chi \in {{H_{-1}}^{\perp}}}{||\chi||_2 \leq 1}}\Biggl|\bigintss \chi \xi_0 d\Pro\Biggr|=0.$$
2) Notice that it is easy to see that for any $\chi \in L^2(X)$,
$$\bigintss \chi \xi_n d\Pro \tend{n}{+\infty}0.$$
Indeed, assume that $\chi$ is the indicator function of some set, then
$$\bigintss \chi \xi_n d\Pro  \leq \sqrt{|B_n|} \tend{n}{+\infty}0.$$
Now, write
$$\chi=\sum_{j=1}^{m}\chi_j \1_{A_j},$$
then
$$\Biggl|\bigintss \chi \xi_n d\Pro  \Biggr| \leq
\Bigl(\sum_{j=1}^{m}|\chi_j| \Bigr)\sqrt{|B_n|}  \tend{n}{+\infty}0.$$
We conclude by the density of the simple functions in $L^2(X)$. An alternative proof can be obtained with the help of Lebesgue density theorem.
\end{rem}

 From Theorem \ref{dicho} the proof of Theorem \ref{m1} is straightforward. Indeed,
\begin{proof}[{\textbf{Proof of Theorem \ref{m1}}}]By Theorem \ref{dicho}, the spectral type of any rank one is either singular or its absolute continuous part is equivalent to the Lebesgue measure according as the product of the Mahler measure of $P_j$ converge or diverge. Precisely, it is singular if and only if
the product diverge otherwise the absolute continuous part is equivalent to the Lebesgue measure. The proof of the theorem is complete.
\end{proof}

\noindent{}Let us prove Corollaries \ref{singular} and  \ref{CAN}.

\begin{proof}[\textbf{Proof of Corollaries \ref{singular} and \ref{CAN}.}]
Straightforward, by  Theorem \ref{dicho} combined with Theorem \ref{main2}.
\end{proof}

\begin{thank}
The author wishes to express his heartfelt thanks to Mahendra Nadkarni who inspired largely this work and for his encouragements and many stimulating conversations. He would like also to express his thanks to Fran\c cois Parreau who introduced him to the Riesz products business and to Jean-Paul Thouvenot who introduced him to the wild world of rank one maps.
\end{thank}

\end{document}